\documentclass[]{article}

\usepackage[a4paper, margin=3cm]{geometry}
\usepackage{microtype}
\usepackage{amsmath,amssymb,mathtools,amsthm}
\usepackage{xfrac}
\usepackage{graphicx}
\usepackage{subcaption}
\usepackage{epstopdf}
\usepackage[table]{xcolor}
\usepackage{tikz}
\usepackage{booktabs}
\usepackage{makecell,multirow,diagbox,rotating,tabularx}
\usepackage[noend,ruled,vlined,linesnumbered]{algorithm2e}
\SetInd{1em}{1em}

\usepackage{natbib}
\usepackage{url}
\usepackage{hyperref}
\usepackage{authblk}
\usepackage{cleveref}
\let\cite\citep
\newcommand{\email}[1]{\href{mailto:#1}{#1}}
\newenvironment{keywords}{\paragraph{Keywords:}}{}

\newcommand{\E}{\mathbb{E}}
\newcommand{\IR}{\mathbb{R}}

\renewcommand{\P}{\mathcal{P}}
\newcommand{\LL}{\psi}

\DeclareMathOperator{\dist}{dist}
\newcommand{\F}{\mathcal{F}}
\newcommand{\ML}{\mathcal{L}}

\definecolor{Gray}{gray}{0.92}
\definecolor{spacoRed}{HTML}{DA4C4C}
\definecolor{mgdGreen}{HTML}{009E8E}
\definecolor{gdaOrange}{HTML}{495D79}
\definecolor{mmpenPurple}{HTML}{9467BD}
\definecolor{gbalBlue}{HTML}{1F77B4}
\definecolor{hingePurple}{RGB}{148,103,189}
\definecolor{ganBlue}{RGB}{31,119,191}
\definecolor{gancGreen}{RGB}{0,158,142}
\definecolor{lenBlue}{RGB}{31,119,191}
\definecolor{egGreen}{RGB}{0,158,142}
\definecolor{boxgreen}{RGB}{0,158,142}
\definecolor{boxorange}{RGB}{255,108,0}
\definecolor{fariness-c1}{HTML}{96C047}
\definecolor{fariness-c2}{HTML}{1F77B4}
\definecolor{fariness-c3}{HTML}{E377C2}
\definecolor{fariness-c4}{HTML}{DA4C4C}
\definecolor{fariness-c5}{HTML}{9467BD}

\DeclareRobustCommand{\legendItem}[3][0.6]{%
  \tikz[baseline=-0.8ex]{%
    \draw[#2, opacity=0.25, line width=3pt, line cap=round] (0,0) -- (#1,0);
    \draw[#2, line width=1.5pt, line cap=round] (0,0) -- (#1,0);
  }%
  \hspace{0.3em}{#3}%
}
\DeclareRobustCommand{\legendItemm}[3][0.6]{%
  \tikz[baseline=-0.8ex]{%
    \draw[#2, line width=1.5pt, line cap=round] (0,0) -- (#1,0);
  }%
  \hspace{0.3em}{#3}%
}
\DeclareRobustCommand{\legendsquare}[1]{%
  \tikz[baseline=0ex, inner sep=0pt]{%
    \draw[draw=black, fill=#1, line width=0.2pt] (0,0) rectangle (1.5ex,1.5ex);
  }%
}

\theoremstyle{plain}
\newtheorem{theorem}{Theorem}[section]
\newtheorem{example}[theorem]{Example}
\newtheorem{proposition}[theorem]{Proposition}
\newtheorem{lemma}[theorem]{Lemma}
\newtheorem{corollary}[theorem]{Corollary}
\theoremstyle{definition}
\newtheorem{definition}[theorem]{Definition}
\newtheorem{assumption}[theorem]{Assumption}
\theoremstyle{remark}
\newtheorem{remark}[theorem]{Remark}

\title{A Single-Loop Penalty-based Algorithm for Stochastic Minimax Optimization with Nonlinear Coupled Constraints}

\author{
	Qichao Cao\thanks{Department of Mathematics, Southern University of Science and Technology, Shenzhen 518055, People's Republic of China. (\email{caoqc2024@mail.sustech.edu.cn}).} \quad
	Shangzhi Zeng\thanks{National Center for Applied Mathematics Shenzhen, and Department of Mathematics, Southern University of Science and Technology, Shenzhen 518055, People's Republic of China. (\email{zengsz@sustech.edu.cn}).} \quad
	Jin Zhang\thanks{Corresponding author. Department of Mathematics, and National Center for Applied Mathematics Shenzhen, Southern University of
		Science and Technology, Shenzhen 518055, People's Republic of China. (\email{zhangj9@sustech.edu.cn}).} \quad
	Yuxuan Zhou\thanks{Department of Mathematics, Southern University of Science and Technology, Shenzhen 518055, People's Republic of China. (\email{zhouyx2025@mail.sustech.edu.cn}).}
}

\begin{document}

\maketitle

\begin{abstract}
    We study stochastic nonconvex-concave minimax optimization with nonlinear
    coupled constraints that are convex in the maximization variable. To address the nonsmoothness arising from such constraints, we develop a
    penalty-based smooth approximation that combines quadratic penalization of the
    coupled constraints with quadratic regularization of the inner maximization
    problem. Based on this approximation, we propose SPACO, a single-loop
    stochastic gradient algorithm that tracks the inner maximizer by one stochastic
    ascent step, updates the outer variable using an inexact stochastic descent
    direction, and adaptively updates the penalty and regularization parameters
    over the iterations. For the penalty-based smooth approximation, we establish
    convergence guarantees from both minimizer and stationarity perspectives. 
    In particular, we introduce enhanced KKT conditions and show that stationary points of the
    smooth approximations can converge to points satisfying these conditions. An
    example illustrates that the enhanced KKT conditions can help exclude KKT
    points that are not local minimizers. For SPACO, we prove non-asymptotic
    complexity bounds for stationarity and feasibility, as well as asymptotic subsequential convergence to enhanced KKT points. Numerical experiments on synthetic examples,
    fairness-aware classification, and constrained generative adversarial network
    training demonstrate the effectiveness of the proposed method.
\end{abstract}

\begin{keywords}
    Minimax optimization, Coupled constraint, Stochastic gradient algorithm, Single-loop,  Convergence analysis
\end{keywords}

\section{Introduction}

In this paper, we consider stochastic minimax optimization with coupled constraints (MCC), formulated as
\begin{equation}\label{MCC}
\begin{aligned}
    &\min_{x\in X} \max_{y\in Y} \; \{f(x,y) \mid c(x,y)\le 0\},
\end{aligned}
\end{equation}
where $f(x,y) := \mathbb{E}_{\xi \sim D}\big[ F(x,y;\xi) \big]$ is a stochastic objective defined by the expectation over a distribution $D$, and $F(x,y;\xi)$ denotes a stochastic realization associated with a random sample $\xi \sim D$. Here,  $X \subset \mathbb{R}^n$ and $Y \subset \mathbb{R}^m$ are nonempty, closed and convex sets; the precise standing assumptions used in the analysis are stated in Section~\ref{sec:preliminaries}. The coupled constraint function $c : X \times Y \to \mathbb{R}^p$ is continuously differentiable and potentially nonlinear. We assume that $f(x,y)$ is concave in $y$ and that $c(x,y)$ is convex in $y$.

When the coupled constraint $c(x,y)\le 0$ is absent, problem~\eqref{MCC} reduces to the well-studied classical minimax optimization problem, which has been extensively studied due to its broad applicability in many applications including robust optimization~\cite{ben1998robust,bertsimas2004price}, adversarial learning~\cite{szegedy2014intriguing,goodfellow2015explaining}, and generative adversarial networks~\cite{goodfellow2014generative,arjovsky2017wasserstein}. Consequently, numerous algorithms have been developed for solving these minimax problems in both deterministic~\cite{korpelevich1976extragradient,nemirovski2004prox,chambolle2011first} and stochastic settings~\cite{lin2020gradient,nemirovski2009robust,juditsky2011solving}.

Despite this substantial progress, the development of algorithms for minimax optimization with coupled constraints remains limited. 
The inclusion of coupled constraints provides a more powerful modeling framework for a wide range of challenging applications, including constrained generative adversarial network training~\cite{chao2021constrained}, perceptual adversarial robustness~\cite{laidlaw2020perceptual}, adversarial attacks in network flow problems~\cite{tsaknakis2023minimax}, and linear projection equations~\cite{dai2024optimality}. However, from a computational perspective, coupled constraints introduce significant complexity. Even when the objective is strongly convex–strongly concave and the coupled constraints are linear, solving the MCC problem is NP-hard~\cite{tsaknakis2023minimax}.

\textbf{Min--min--max reformulation.} Most existing numerical approaches for solving MCC rely on duality-based techniques. In particular, problem~\eqref{MCC} can be reformulated via the value function
\begin{equation}\label{deterministicMCC}
    \min_{x\in X} \; \varphi(x),\;\text{where }\; \varphi(x):=\;\max_{y\in Y}\{f(x,y)\mid c(x,y)\le 0\}.
\end{equation}
The value function is the optimal value of a constrained inner maximization problem. By introducing Lagrange multipliers for the coupled constraints and assuming strong duality holds for this inner problem, the value function admits the Lagrangian representation
\begin{equation*}
\varphi(x)=\max_{y\in Y}\min_{\lambda\in \mathbb{R}_{+}^{p}} \; \ML(x,y,\lambda) = \min_{\lambda\in \mathbb{R}_{+}^{p}} \max_{y\in Y}\;\ML(x,y,\lambda),
\end{equation*}
where $\ML(x,y,\lambda) := f(x,y) - \lambda^\top c(x,y)$ is the Lagrangian for the inner maximization subproblem. This leads to a min--min--max reformulation of the MCC:
\begin{equation}\label{minminmax}
\min_{x\in X, \lambda\in \mathbb{R}_{+}^{p}} \max_{y\in Y}\;\ML(x,y,\lambda),
\end{equation}
where the multiplier $\lambda$ is introduced as an additional optimization variable. A key advantage of this reformulation is that it enables the use of a broad range of existing algorithmic techniques developed for min--max optimization, especially when the constraints are linear. In particular, \cite{tsaknakis2023minimax,zhang2024zeroth,zhang2024alternating} proposed several gradient-based descent--ascent methods for minimax problems with linearly coupled constraints.

Minimax optimization with nonlinear coupled constraints is substantially more
challenging. Several methods have been developed for deterministic problems in
this setting based on the min--min--max reformulation. \cite{lu2023first}
proposed a class of nested augmented Lagrangian methods and provided a
comprehensive convergence analysis. \cite{hu2024minimization} further
reformulated \eqref{minminmax} as a minimization problem via Moreau envelope
techniques and developed a novel subgradient method. More recently,
\cite{hu2026convergence} developed a value-function-based hypergradient
descent method and computed an approximate hypergradient of $\varphi(x)$ using
an augmented Lagrangian approach for the inner problem.

While these methods provide effective algorithmic frameworks for the
min--min--max reformulation, directly working with this reformulation has
limitations. In particular, first-order methods applied to
\eqref{minminmax} naturally seek points satisfying the KKT stationarity
condition~\eqref{KKT} of the reformulated problem. Such stationarity, however,
need not characterize local minimizers of the original value-function problem:
a KKT point of the reformulation may fail to be a local minimizer of
\(\varphi\). We refer to such points as spurious KKT points. The following
example illustrates this limitation.

\begin{example}\label{exm}
Consider the following MCC:
\begin{equation}\label{example}
\begin{aligned}
\min_{x\in X}\max_{\substack{y\in Y\\ \mathbf{e}^\top y-\|x\|^2\le 0}}
\left(\frac{\|x\|^2}{2}-1\right)^2
+2\|x\|^2-(\mathbf{e}^\top x)^2
-\frac{\|y-\mathbf{e}\|^2}{2}
+\frac{x^\top y}{2},
\end{aligned}
\end{equation}
where $X=[-\frac{5}{4},\frac{5}{4}]^2\subset\IR^2$,
$Y=[-10,10]^2\subset\IR^2$, and $\mathbf{e}=(1,1)$.
The optimal solution is
$(x^*,y^*)=(\alpha\mathbf{e},(1+\alpha/2)\mathbf{e})$, where
\(\alpha\approx-1.0545\) is the negative root of
\(8\alpha^3-7\alpha+2=0\). Nevertheless, a direct verification shows that
$(\mathbf{0},\mathbf{0},1)$ also satisfies the KKT system~\eqref{KKT}, whereas
$x=\mathbf{0}$ is not a local minimizer of the value function \(\varphi\).
A detailed analysis is provided in Proposition~\ref{prop:spurious-kkt}.
\end{example}

\textbf{Penalty-based Smooth Approximation.} Motivated by the limitation above,
we develop a penalty-based approach for solving MCC. Penalty methods are
classical tools for constrained optimization
\cite{nocedal2006numerical,bertsekas1997nonlinear} and are closely related to
enhanced Fritz John/KKT-type and sequential optimality conditions
\cite{hestenes1975optimization,bertsekas2002pseudonormality,andreani2019sequential}.
The MCC can be viewed as the minimization of the value function in
\eqref{deterministicMCC}. Although this formulation is compact, minimizing
\(\varphi(x)\) is difficult because the value function may be nonsmooth. This
nonsmoothness is induced both by the coupled constraints \(c(x,y)\le0\) and by
the possible non-uniqueness of the inner maximizer \cite{guo2024sensitivity}.

To address this difficulty, we incorporate the coupled constraints through a
quadratic penalty and add a quadratic regularization term in the maximization
variable. This gives the regularized penalized objective
\begin{equation*}
    \LL_{\rho,\sigma}(x,y) := f(x,y) - \frac{\rho}{2}\big\|[c(x,y)]_+\big\|^2 - \frac{\sigma}{2}\|y\|^2,
\end{equation*}
where $\rho > 0$ is a penalty parameter and $\sigma > 0$ is a regularization parameter. The corresponding smooth approximation of $\varphi(x)$ is
\begin{equation}\label{smoothapproximation}
    \varphi_{\rho,\sigma}(x) := \max_{y \in Y}\, \LL_{\rho,\sigma}(x,y).
\end{equation}
We analyze the limiting behavior of this approximation and connect its
approximate stationarity to newly introduced enhanced KKT conditions for the
original MCC. Building on this framework, we then develop a practical
single-loop stochastic gradient algorithm.

In contrast to the min--min--max reformulation, a penalty scheme retains
sequential information about constraint violation, which is useful for
excluding spurious KKT stationary points. Figure~\ref{fig:intro-example}
provides a compact numerical preview on Example~\ref{exm}; implementation
details are given in Section~\ref{sec:experiments}. The representative
min--min--max methods MGD~\cite{tsaknakis2023minimax},
GBAL~\cite{hu2026convergence}, and MMPen~\cite{hu2024minimization} can be
attracted to the spurious KKT point, whereas SPACO reaches the true solution
from all tested initializations.

\begin{figure}[t]
	\centering
	\captionsetup[subfigure]{font=footnotesize,skip=1pt}
	\captionsetup{font=small,skip=3pt}
	\newlength{\imagewidth}
	\setlength{\imagewidth}{0.235\linewidth}
	
	\begin{subfigure}[t]{\imagewidth}
		\centering
		\resizebox{\linewidth}{!}{
			\begin{tikzpicture}[inner sep=0pt, outer sep=0pt]
				\node[anchor=south west, inner sep=0] (image) at (0,0) {
					\includegraphics[width=0.7\imagewidth]{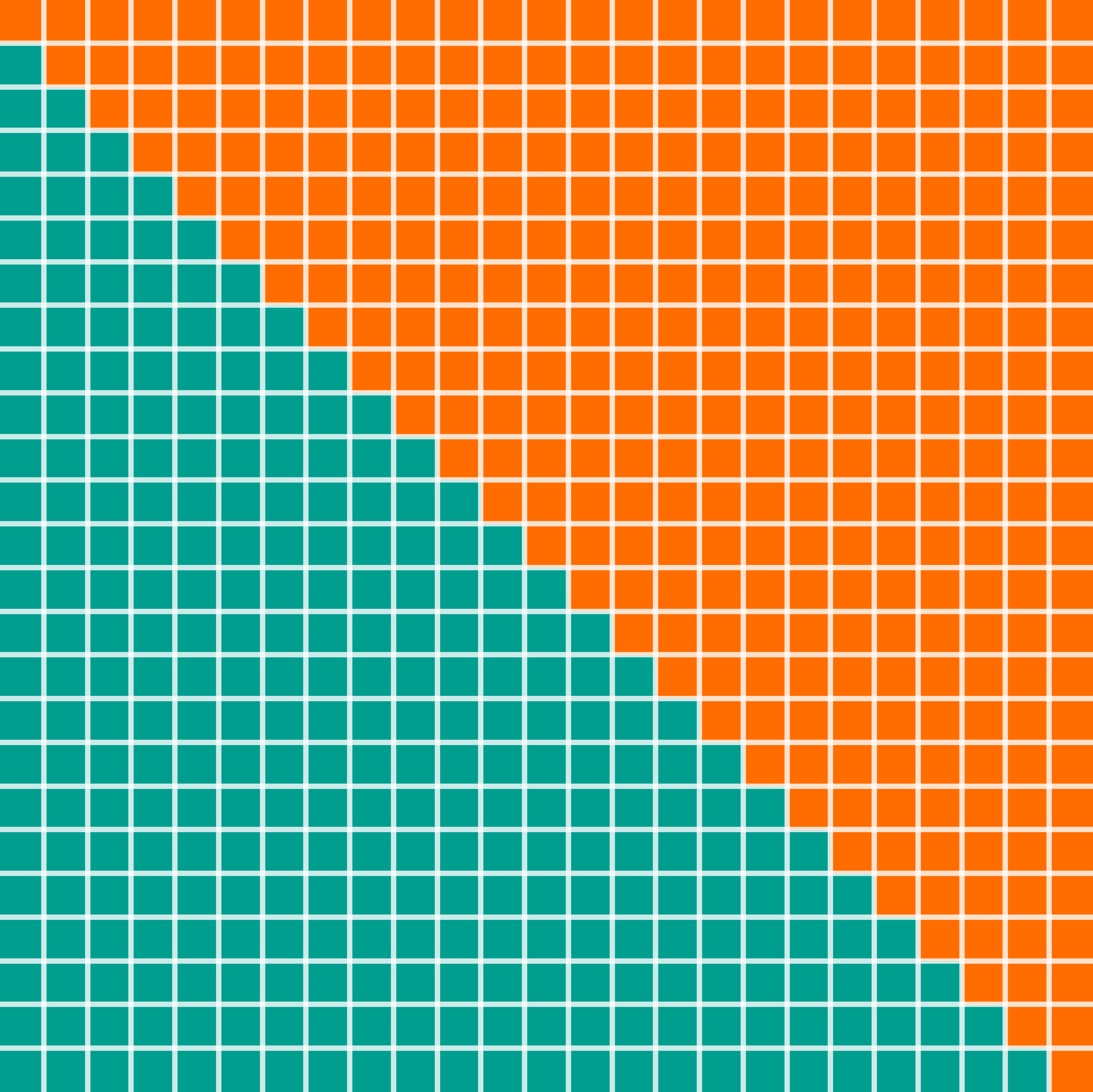}
				};
				
				\begin{scope}[
					x={(image.south east)},
					y={(image.north west)}
					]   
					\foreach \pos/\label in {0.0/{-\frac{5}{4}}, 0.5/{0}, 1.0/{\frac{5}{4}}} {
						\draw (\pos, 0) -- (\pos, -0.04) node[below, font=\scriptsize] {$\label$};
					}
					
					\foreach \pos/\label in {0.0/{-\frac{5}{4}}, 0.5/{0}, 1.0/{\frac{5}{4}}} {
						\draw (0, \pos) -- (-0.04, \pos) node[left, font=\scriptsize] {$\label$};
					}
					
					\draw (0,0) rectangle (1,1);
					\path[use as bounding box] (-0.14,-0.13) rectangle (1.02,1.02);
				\end{scope}
			\end{tikzpicture}%
		}
		\caption{MGD}\label{fig:mgd-initialization}
	\end{subfigure}%
	\hfill
	\begin{subfigure}[t]{\imagewidth}
		\centering
		\resizebox{\linewidth}{!}{
			\begin{tikzpicture}[inner sep=0pt, outer sep=0pt]
				\node[anchor=south west, inner sep=0] (image) at (0,0) {
					\includegraphics[width=0.7\imagewidth]{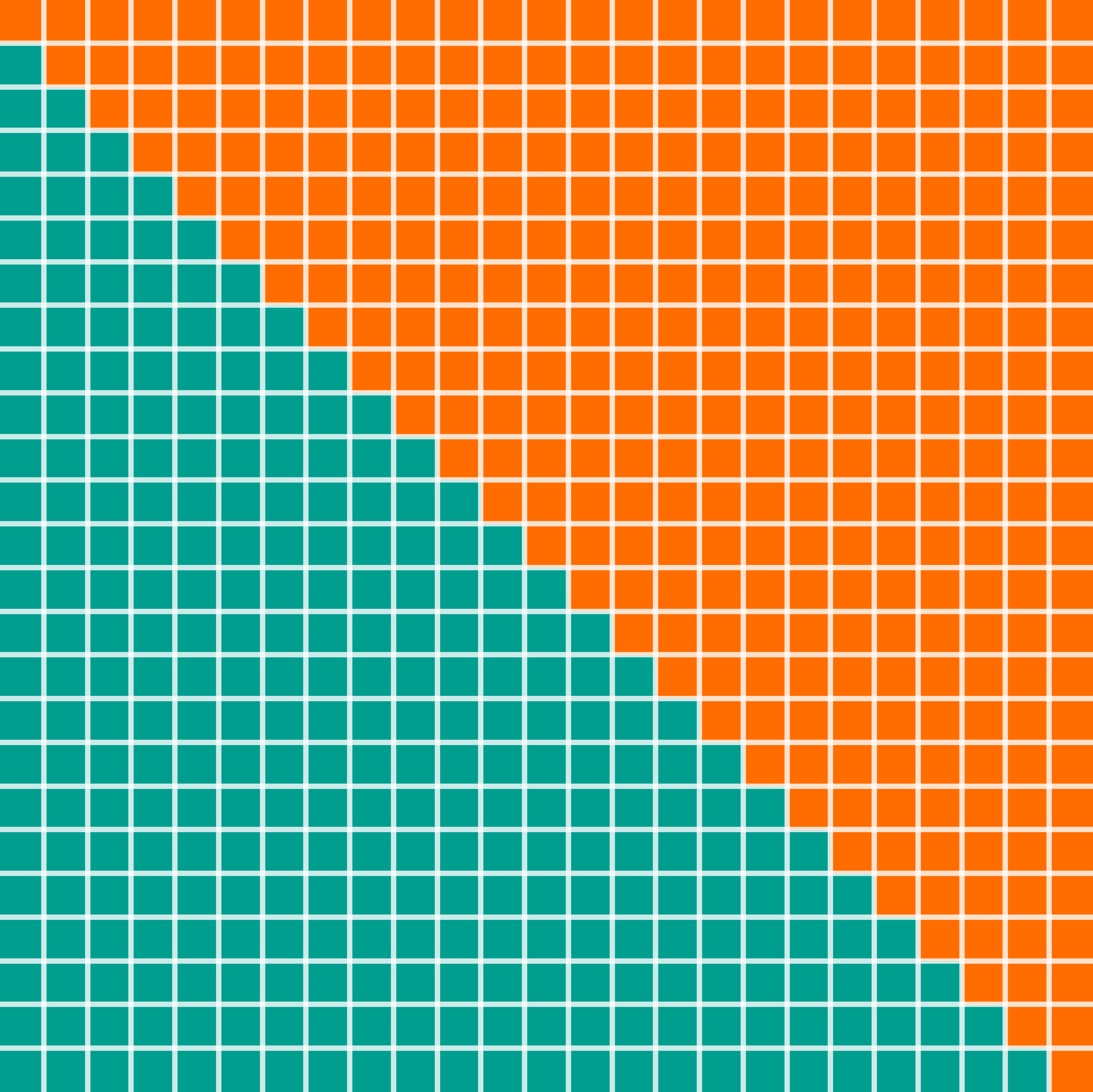}
				};
				
				\begin{scope}[
					x={(image.south east)},
					y={(image.north west)}
					]   
					\foreach \pos/\label in {0.0/{-\frac{5}{4}}, 0.5/{0}, 1.0/{\frac{5}{4}}} {
						\draw (\pos, 0) -- (\pos, -0.04) node[below, font=\scriptsize] {$\label$};
					}
					
					\foreach \pos/\label in {0.0/{-\frac{5}{4}}, 0.5/{0}, 1.0/{\frac{5}{4}}} {
						\draw (0, \pos) -- (-0.04, \pos) node[left, font=\scriptsize] {$\label$};
					}
					
					\draw (0,0) rectangle (1,1);
					\path[use as bounding box] (-0.14,-0.13) rectangle (1.02,1.02);
				\end{scope}
			\end{tikzpicture}%
		}
		\caption{GBAL}\label{fig:gbal-initialization}
	\end{subfigure}%
	\hfill
	\begin{subfigure}[t]{\imagewidth}
		\centering
		\resizebox{\linewidth}{!}{
			\begin{tikzpicture}[inner sep=0pt, outer sep=0pt]
				\node[anchor=south west, inner sep=0] (image) at (0,0) {
					\includegraphics[width=0.7\imagewidth]{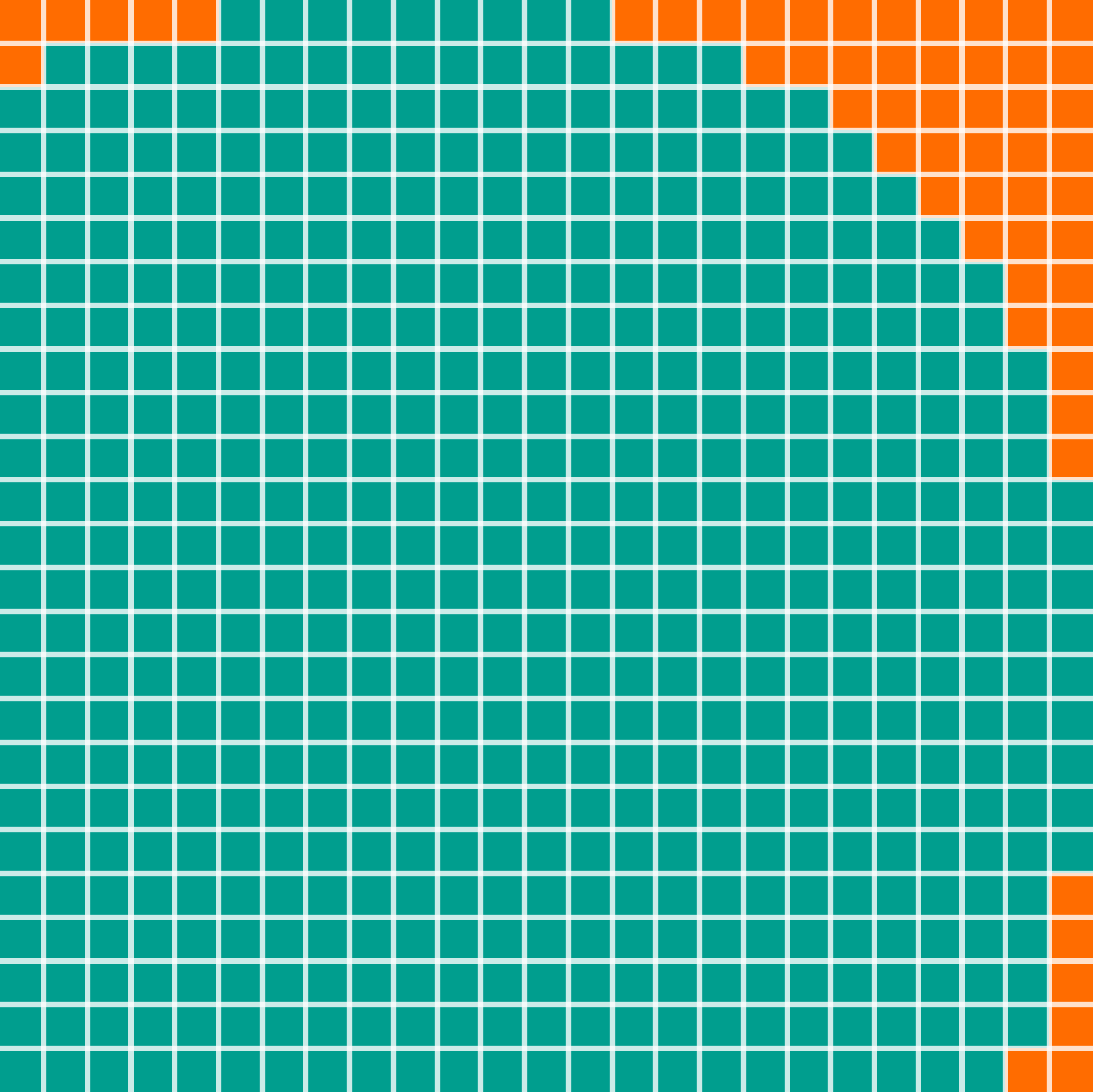}
				};
				
				\begin{scope}[
					x={(image.south east)},
					y={(image.north west)}
					]   
					\foreach \pos/\label in {-0.0/{-\frac{5}{4}}, 0.5/{0}, 1.0/{\frac{5}{4}}} {
						\draw (\pos, 0.0) -- (\pos, -0.04) node[below, font=\scriptsize] {$\label$};
					}
					
					\foreach \pos/\label in {0.0/{-\frac{5}{4}}, 0.5/{0}, 1.0/{\frac{5}{4}}} {
						\draw (0, \pos) -- (-0.04, \pos) node[left, font=\scriptsize] {$\label$};
					}
					
					\draw (0,0) rectangle (1,1);
					\path[use as bounding box] (-0.14,-0.13) rectangle (1.02,1.02);
				\end{scope}
			\end{tikzpicture}%
		}
		\caption{MMPen}\label{fig:mmpen-initialization}
	\end{subfigure}%
	\hfill
	\begin{subfigure}[t]{\imagewidth}
		\centering
		\resizebox{\linewidth}{!}{
			\begin{tikzpicture}[inner sep=0pt, outer sep=0pt]
				\node[anchor=south west, inner sep=0] (image) at (0,0) {
					\includegraphics[width=0.7\imagewidth]{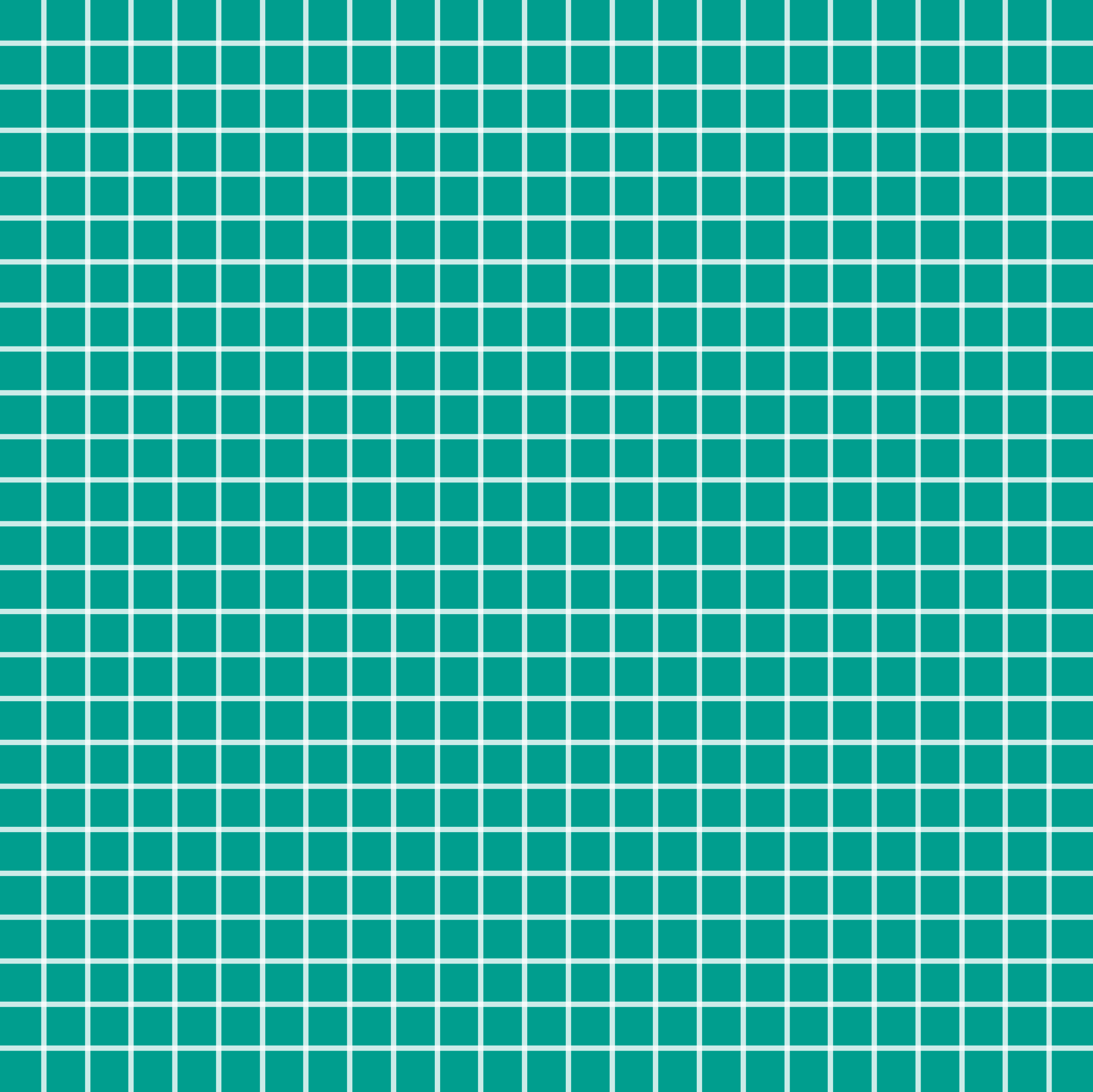}
				};
				
				\begin{scope}[
					x={(image.south east)},
					y={(image.north west)}
					]   
					\foreach \pos/\label in {0.0/{-\frac{5}{4}}, 0.5/{0}, 1.0/{\frac{5}{4}}} {
						\draw (\pos, 0) -- (\pos, -0.04) node[below, font=\scriptsize] {$\label$};
					}
					
					\foreach \pos/\label in {0.0/{-\frac{5}{4}}, 0.5/{0}, 1.0/{\frac{5}{4}}} {
						\draw (0, \pos) -- (-0.04, \pos) node[left, font=\scriptsize] {$\label$};
					}
					
					\draw (0,0) rectangle (1,1);
					\path[use as bounding box] (-0.14,-0.13) rectangle (1.02,1.02);
				\end{scope}
			\end{tikzpicture}%
		}
		\caption{SPACO (Ours)}\label{fig:spaco-initialization}
	\end{subfigure}
	\caption{
		Convergence outcomes from different initial points on Example~\ref{exm}. Panels (a)--(c) show the min--min--max methods MGD, GBAL, and MMPen, while panel (d) shows SPACO. Initial points are taken on a grid over \(X=[-\frac{5}{4},\frac{5}{4}]^2\) with mesh size \(0.1\). Green \legendsquare{boxgreen} indicates convergence to the true solution, whereas orange \legendsquare{boxorange} indicates convergence to the spurious KKT point.
	}\label{fig:intro-example}
\end{figure}

\subsection{Contribution}
We summarize the contributions as follows:

  \textbf{Theoretical validation of penalty-based smooth approximation.}  We justify the proposed approximation from both minimizer and stationarity perspectives. First, we prove that accumulation points of minimizers of the smooth approximations solve the original MCC. Second, we introduce Type-I and Type-II enhanced KKT conditions, which strengthen standard KKT stationarity by incorporating sequential optimality information generated by the penalty scheme. We establish Type-I enhanced KKT as a necessary condition for local minimizers and Type-II enhanced KKT as a necessary condition for strict local minimizers. We further show that accumulation points of approximate stationary points of the smooth approximations satisfy these enhanced KKT conditions.

\textbf{Stochastic single-loop gradient algorithm for MCC.} Building on the smooth approximation framework, we develop SPACO (Stochastic Penalty-based Algorithm for minimax optimization with COupled constraints), a single-loop stochastic gradient algorithm for MCC with nonlinear coupled constraints. SPACO avoids this inner loop by tracking the inner solution through one stochastic ascent step and then performing an inexact stochastic descent update for the outer variable. We establish non-asymptotic complexity bounds for stationarity and feasibility, and prove asymptotic convergence to enhanced KKT points. Table~\ref{compare} highlights our primary algorithmic contributions compared with existing methods. Finally, we empirically validate the effectiveness and efficiency of SPACO on synthetic examples and real-world applications, including generative adversarial networks and fairness-aware classification.

\begin{table}[tbp]
    \centering
        \caption{NC, SC, and C denote Non-Convex, Strongly-Convex/Strongly-Concave, and Convex/Concave, respectively. ``Non-Asym'' and ``Asym'' indicate whether the method provides a convergence rate for a specific stationarity measure, and whether it establishes convergence of the iterate sequence to a limit point, respectively.}
        	\label{compare}
    \resizebox{1.0\linewidth}{!}%
    {
	\begin{tabular}{lcccccc}
		\hline
		Methods & Objective & Constraint & Loop Structure & Stochastic  & Convergence \\
		\hline
        \cite{tsaknakis2023minimax}         & SC-SC & Linear    & Nested-Loop & No  & Non-Asym \\
		\cite{lu2023first}          & NC-C  & Nonlinear & Nested-Loop & No  & Non-Asym \\
        \cite{hu2026convergence}         & NC-NC & Nonlinear & Nested-Loop & No  & Non-Asym     \\
        \cite{hu2024minimization}         & NC-SC & Nonlinear & Single-Loop & No  & Non-Asym     \\
		Ours & NC-C  & Nonlinear & Single-Loop & Yes  & Non-Asym \& Asym \\
		\hline
	\end{tabular}}
\end{table}

\textbf{Paper Organization.} The remainder of this paper is organized as follows. Section~\ref{sec:preliminaries} introduces the notation and preliminary results. Section~\ref{approximaiton} constructs the penalty-based smooth approximation and studies the limiting behavior of minimizers and stationary points. Section~\ref{convergence analysis} presents SPACO and establishes its non-asymptotic and asymptotic convergence guarantees. Section~\ref{sec:experiments} reports numerical experiments that validate the effectiveness of SPACO.

\section{Notation and Preliminaries}\label{sec:preliminaries}
This section fixes the notation and standing assumptions used throughout the paper. We also recall several basic tools from variational analysis and constrained optimization that will be used to analyze the penalty-based approximation in Section~\ref{approximaiton}.
\subsection{Notation and Standing Assumptions}
Throughout the paper, $\langle\cdot,\cdot\rangle$ and $\|\cdot\|$ denote the Euclidean inner product and norm, respectively. For a positive integer $p$, let $[p]:=\{1,\ldots,p\}$. For $z=(z_1,\ldots,z_n)\in\IR^n$, define
$[z]_+:=(\max\{z_1,0\},\ldots,\max\{z_n,0\}).$
Given $z\in\IR^n$ and $\epsilon>0$, let $B_\epsilon(z)$ be the closed Euclidean ball centered at $z$ with radius $\epsilon$. We write $\operatorname{ri}(S)$ for the relative interior of a set $S$, $\dist(z,S):=\inf_{u\in S}\|z-u\|$ for the distance from $z$ to a nonempty closed set $S$, and $\mathcal{P}_S(z)$ for the Euclidean projection of $z$ onto a nonempty closed convex set $S$. For a closed convex set $S$, we denote by $\mathcal{N}_S(z)$ the normal cone to $S$ at $z$. Specifically,
$
\mathcal{N}_S(z) = \{ v : \langle v, z' - z \rangle \leq 0, \forall z' \in S \},$
for $z\in S$, and $\mathcal{N}_S(z)=\emptyset$ for $z\notin S$.

For the MCC problem~\eqref{MCC}, we write
\[
\Gamma(x):=\{y\in Y:\ c(x,y)\le 0\},
\qquad
\varphi(x):=\max_{y\in\Gamma(x)} f(x,y),
\]
for the feasible-set mapping and the value function of the inner maximization problem. We impose the following standing assumptions throughout the paper.

\begin{assumption}\label{assum1}
The following conditions hold.
\begin{enumerate}
\item \(X\subset\mathbb R^n\) is nonempty, convex, and closed, and \(Y\subset\mathbb R^m\) is nonempty, convex, and compact.
\item There exist open sets \(U_x\subset\mathbb R^n\) and \(U_y\subset\mathbb R^m\) such that \(X\subset U_x\), \(Y\subset U_y\), and \(f\) and each \(c_i\), \(i\in[p]\), are defined on \(U_x\times U_y\), continuously differentiable, and have Lipschitz continuous gradients on \(U_x\times U_y\).
\item For every fixed \(x\in U_x\), \(f(x,\cdot)\) is concave on \(Y\), and each \(c_i(x,\cdot)\) is convex on \(Y\).
\item The set \(\Gamma(x):=\{y\in Y:c(x,y)\le 0\}\) is nonempty for every \(x\in X\).
\end{enumerate}
\end{assumption}

Under Assumption~\ref{assum1}, $\Gamma(x)$ is nonempty and compact for every $x\in X$; hence the maximum defining $\varphi(x)$ is attained and the value function is well defined.

\subsection{Continuity and Epi-convergence}
We next recall the continuity and convergence notions used to analyze the limiting behavior of the penalty-based smooth approximations developed in Section~\ref{approximaiton}.

\begin{definition}
A function $\phi:X\to\mathbb{R}$ is called lower semicontinuous on $X$ if, for any $\bar{x}\in X$ and any sequence $\{x_k\}\subset X$ converging to $\bar{x}$,
$
\liminf_{k\to\infty}\;\phi(x_k)\ge \phi(\bar{x}).
$
\end{definition}

Lower semicontinuity is equivalent to closedness of the epigraph and is a standard condition for the existence of minimizers; see, e.g., \citep[Theorem~1.9]{rockafellar2009variational}. We now introduce the corresponding continuity notions for set-valued mappings.
\begin{definition}
A set-valued mapping $S:X\rightrightarrows Y$ is called inner semicontinuous (resp. outer semicontinuous) at $\bar{x}\in X$ if
$\operatorname{Liminf}_{x\to\bar{x}}S(x)\supseteq S(\bar{x})
\;
\left(\text{resp. }\operatorname{Limsup}_{x\to\bar{x}}S(x)\subseteq S(\bar{x})\right),$
where the limits are taken along sequences in $X$ and
\begin{equation*}
\begin{aligned}
\underset{x\to\bar{x}}{\operatorname{Liminf}} \;S(x)
&:=\{y\mid \forall x^k\to\bar{x},\ \exists y^k\in S(x^k)\text{ such that }y^k\to y\},\\
\underset{x\to\bar{x}}{\operatorname{Limsup}} \;S(x)
&:=\{y\mid \exists x^k\to\bar{x},\ y^k\in S(x^k)\text{ such that }y^k\to y\}.
\end{aligned}
\end{equation*}
\end{definition}

The following lemma shows that inner semicontinuity of the feasible-set mapping is sufficient for lower semicontinuity of the value function.
\begin{lemma}\label{isc and lsc}
    Suppose that $\Gamma(x):=\{y\in Y:\ c(x,y)\le 0\}$ is inner semicontinuous on $X$. Then $\varphi$ is lower semicontinuous on $X$.
\end{lemma}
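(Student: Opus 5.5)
The plan is the standard argument that a maximum over an inner semicontinuous feasible-set mapping of a continuous function is lower semicontinuous. Fix $\bar x\in X$ and a sequence $\{x_k\}\subset X$ with $x_k\to\bar x$; we must show $\liminf_{k\to\infty}\varphi(x_k)\ge\varphi(\bar x)$.

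First, by Assumption~\ref{assum1}, $\Gamma(\bar x)$ is nonempty and compact and $f(\bar x,\cdot)$ is continuous, so there is a maximizer $\bar y\in\Gamma(\bar x)$ with $f(\bar x,\bar y)=\varphi(\bar x)$. Second, since $\Gamma$ is inner semicontinuous at $\bar x$, we have $\bar y\in\operatorname{Liminf}_{x\to\bar x}\Gamma(x)$. By the definition of $\operatorname{Liminf}$ applied to this particular sequence $x_k\to\bar x$, there exist $y_k\in\Gamma(x_k)$ with $y_k\to\bar y$. Third, feasibility of $y_k$ gives $\varphi(x_k)\ge f(x_k,y_k)$ for every $k$. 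Since $f$ is continuous on $U_x\times U_y\supset X\times Y$ by Assumption~\ref{assum1}(2), and $(x_k,y_k)\to(\bar x,\bar y)$, we get
\[
\liminf_{k\to\infty}\varphi(x_k)\;\ge\;\lim_{k\to\infty} f(x_k,y_k)\;=\;f(\bar x,\bar y)\;=\;\varphi(\bar x).
\]
This is exactly lower semicontinuity at $\bar x$. Since $\bar x$ was arbitrary, $\varphi$ is lower semicontinuous on $X$.

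There is no real obstacle here. The one point needing care is that the paper's definition of $\operatorname{Liminf}$ quantifies over \emph{every} sequence $x^k\to\bar x$. That is what lets us produce approximating $y_k$ along the given sequence, rather than along only some sequence. Only continuity of $f$ and attainment of the inner maximum at $\bar x$ are used; concavity and convexity play no role.
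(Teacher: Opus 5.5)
Your proof is correct and follows essentially the same argument as the paper: choose a maximizer $\bar y\in\Gamma(\bar x)$, use inner semicontinuity along the given sequence to obtain feasible $y_k\in\Gamma(x_k)$ with $y_k\to\bar y$, and pass to the limit in $\varphi(x_k)\ge f(x_k,y_k)$ using continuity of $f$. The paper also handles the case $\varphi(\bar x)=-\infty$ separately; as your argument shows, this case cannot occur under Assumption~\ref{assum1}.
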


Epi-convergence is a set-convergence notion for epigraphs; see, e.g., \citep[Definition~7.1]{rockafellar2009variational}. Rather than working directly with the set-convergence definition, we use the equivalent sequential characterization in \citep[Proposition~7.2]{rockafellar2009variational} and adopt it as our working definition.

\begin{definition}\label{def:epiconvergence}
A sequence of functions $\{\phi_k\}$ defined on $X$ is said to epi-converge to $\phi$ on $X$, written $\phi_k\xrightarrow{e}\phi$, if for every $\bar x\in X$, the following two conditions hold:
\begin{enumerate}
    \item for every sequence $\{x_k\}\subset X$ with $x_k\to \bar x$,
 $
        \liminf_{k\to\infty}\;\phi_k(x_k)\ge \phi(\bar x);
$
    \item there exists a sequence $\{x_k\}\subset X$ with $x_k\to \bar x$ such that
$
        \limsup_{k\to\infty}\;\phi_k(x_k)\le \phi(\bar x).
$
\end{enumerate}
\end{definition}

Epi-convergence is useful for studying the limiting behavior of minimizers. The following result formalizes this connection \citep[Proposition~4.6]{bonnans2013perturbation}.

\begin{lemma}\label{lem:epi-minimizers}
Let \(\phi_k \xrightarrow{e} \phi\) on \(X\). If \(x_k\in\arg\min_{x\in X}\phi_k(x)\) and \(x_k\to\bar x\), then 
\[\bar x\in\arg\min_{x\in X}\phi(x).\]
\end{lemma}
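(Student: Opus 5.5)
The statement to prove is Lemma~\ref{lem:epi-minimizers}: if $\phi_k\xrightarrow{e}\phi$, $x_k\in\arg\min_X\phi_k$ and $x_k\to\bar x$, then $\bar x$ minimizes $\phi$ over $X$. The plan is to compare $\phi(\bar x)$ with $\phi(z)$ for an arbitrary $z\in X$, using the two conditions of Definition~\ref{def:epiconvergence} at the two different points $\bar x$ and $z$.

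\textbf{Step 1: lower bound at $\bar x$.} Since $x_k\to\bar x$ with $\{x_k\}\subset X$ and $\bar x\in X$ (because $X$ is closed), condition~1 gives
\[
\phi(\bar x)\le \liminf_{k\to\infty}\phi_k(x_k).
\]

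\textbf{Step 2: upper bound at $z$.} Fix any $z\in X$. Condition~2 provides a recovery sequence $\{z_k\}\subset X$ with $z_k\to z$ and $\limsup_k\phi_k(z_k)\le\phi(z)$. Minimality of $x_k$ gives $\phi_k(x_k)\le\phi_k(z_k)$ for every $k$.

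\textbf{Step 3: combine.} Chaining the bounds,
\[
\phi(\bar x)\le\liminf_{k\to\infty}\phi_k(x_k)\le\limsup_{k\to\infty}\phi_k(x_k)\le\limsup_{k\to\infty}\phi_k(z_k)\le\phi(z).
\]
Since $z\in X$ was arbitrary, $\bar x\in\arg\min_{x\in X}\phi(x)$.

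There is no real obstacle; the only points needing care are the following. First, $\bar x\in X$ must hold, which follows from the closedness of $X$. Second, we need the convention that the inequalities are taken in the extended reals. This matters in case $\phi$ takes the value $+\infty$, although here all functions are real-valued.

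If the lemma is instead used along a subsequence $x_{k_j}\to\bar x$, the same argument applies. One restricts the recovery sequence to the indices $k_j$ and notes that epi-convergence is inherited by subsequences. Condition~1 along a subsequence follows by padding the sequence with $\bar x$ at the missing indices.
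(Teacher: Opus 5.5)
Your argument is correct and complete. The paper gives no proof of its own here; it cites \citep[Proposition~4.6]{bonnans2013perturbation}, and the standard proof of that result is exactly your chain $\phi(\bar x)\le\liminf_k\phi_k(x_k)\le\limsup_k\phi_k(z_k)\le\phi(z)$. That chain uses condition~1 at $\bar x$, which is legitimate since $X$ is closed, and a recovery sequence at an arbitrary $z\in X$. Your closing remark on subsequences is a worthwhile addition, because Theorem~\ref{epiconvergence} applies the lemma at accumulation points of $\{x_k\}$, and your padding argument correctly transfers condition~1 to the subsequence.
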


\subsection{KKT Stationarity and Constraint Qualification}

We first recall the standard KKT stationarity condition for the MCC problem~\eqref{MCC}, which has also been used in related analyses of minimax problems with coupled constraints \cite{hu2024minimization,guo2024sensitivity,lu2023first,zhang2024zeroth}.

\begin{definition}\label{def:kkt-mcc}
Let $\ML(x,y,\lambda):=f(x,y)-\lambda^\top c(x,y)$ denote the Lagrangian of the inner maximization problem. A pair $(x,y)\in X\times Y$ is called a stationary \emph{(or KKT)} point of~\eqref{MCC} if there exists a multiplier $\lambda\in\IR^p_+$ such that
\begin{align}\label{KKT}
    \begin{cases}
        0\in \nabla_x \ML(x,y,\lambda)+\mathcal{N}_{X}(x), \quad
        0\in -\nabla_y \ML(x,y,\lambda)+\mathcal{N}_{Y}(y),\\
        c(x,y)\le 0,\quad \lambda^\top c(x,y)=0.
    \end{cases}
\end{align}
\end{definition}

The KKT condition is a standard necessary stationarity condition for local minimizers of MCC under suitable regularity assumptions \cite{hu2024minimization,guo2024sensitivity,ma2025calm}.
We next recall a pointwise Slater condition \cite{lu2023first} for the inner feasible system.  

\begin{definition}\label{def:slater}
For a fixed point $\bar{x} \in X$, we say that the Slater condition holds for the feasible system $y \in Y,\ c(\bar{x},y) \le 0$ if there exists a point $\tilde{y} \in \operatorname{ri}(Y)$ such that $c(\bar{x},\tilde{y}) < 0$.
\end{definition}

Inspired by the P\L{}CQ studied in~\cite{andreani2025primal}, we introduce the following local error-bound-type constraint qualification for coupled constraints in the minimax setting.

\begin{definition}\label{def:gplcq}
The \emph{generalized uniform Polyak--\L{}ojasiewicz constraint qualification} (GP\L{}CQ) is said to hold at $(\bar x,\bar y)$ if there exist constants $\delta>0$, $\beta>0$, and $\gamma>0$ such that, for all $(x,y)\in (B_\delta(\bar x)\cap X)\times(B_\delta(\bar y)\cap Y)$,
\begin{equation}\label{GPL-eq}
		\sqrt{p(x,y)}
		\le \frac{\gamma}{\beta}
	\left\|
	\begin{bmatrix}x\\ y\end{bmatrix}
	-\mathcal{P}_{X\times Y}\left(
	\begin{bmatrix}x\\ y\end{bmatrix}
	+\beta
	\begin{bmatrix}
		\nabla_x p(x,y)\\
		-\nabla_y p(x,y)
	\end{bmatrix}
		\right)
		\right\|,
	\end{equation}
where $p(x,y):=\frac12\|[c(x,y)]_+\|^2$ denotes the scalar constraint-violation measure.
\end{definition}
\begin{remark}
In the GP\L{}CQ condition above, the residual direction
$(\nabla_x p,\,-\nabla_y p)$
is not the usual projected-gradient residual for minimizing constraint violation jointly in \((x,y)\). Its sign pattern is aligned with the min--max penalty dynamics. A direct parametric generalization of the P\L{}CQ in~\cite{andreani2025primal} would instead use the following uniformly local \(y\)-component residual bound:
\[
    \sqrt{p(x,y)}
    \le \frac{\gamma}{\beta}
    \left\|
    y-\mathcal{P}_{Y}\bigl(y-\beta\nabla_y p(x,y)\bigr)
    \right\|,
\]
for all $(x,y)$ near $(\bar x,\bar y)$. This condition is stronger than Definition~\ref{def:gplcq}; in particular, it is sufficient for \eqref{GPL-eq}. The weaker GP\L{}CQ adopted here allows regularity in the \(x\)-component to also contribute to the error bound.
\end{remark}

We close this subsection by recording two consequences of Slater's condition. First, it guarantees inner semicontinuity of the feasible-set mapping.

\begin{lemma}\label{slaterisc}
Fix $\bar{x}\in X$ and assume that the Slater condition holds for the feasible system $y\in Y,\; c(\bar{x},y)\le 0$. Then the feasible-set mapping $\Gamma$ is inner semicontinuous at $\bar x$.
\end{lemma}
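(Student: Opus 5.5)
To prove Lemma~\ref{slaterisc}, fix $\bar y\in\Gamma(\bar x)$ and a sequence $x^k\to\bar x$ in $X$. We must construct $y^k\in\Gamma(x^k)$ with $y^k\to\bar y$. The idea is to move $\bar y$ slightly toward the Slater point $\tilde y$, which makes it strictly feasible, and then use continuity of $c$ to keep it feasible for $x^k$ close to $\bar x$.

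\textbf{Step 1: strict feasibility along the segment.} For $t\in(0,1]$, set $y_t:=(1-t)\bar y+t\tilde y$. Since $Y$ is convex and both $\bar y,\tilde y\in Y$, we have $y_t\in Y$. By convexity of each $c_i(\bar x,\cdot)$ on $Y$ (Assumption~\ref{assum1}(3)),
\[
c_i(\bar x,y_t)\le (1-t)c_i(\bar x,\bar y)+t\,c_i(\bar x,\tilde y)\le t\,c_i(\bar x,\tilde y)<0,
\qquad i\in[p].
\]
So every $y_t$ with $t>0$ is strictly feasible at $\bar x$, and $\|y_t-\bar y\|=t\|\tilde y-\bar y\|$. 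Note that only $\tilde y\in Y$ with $c(\bar x,\tilde y)<0$ is used here; the relative-interior requirement is not needed for this argument.

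\textbf{Step 2: perturbation in $x$.} Fix $t=1/j$ for $j=1,2,\dots$. Since $c$ is continuous on $U_x\times U_y$ and $c(\bar x,y_{1/j})<0$ componentwise, there is $\delta_j>0$ such that $c(x,y_{1/j})<0$ for all $x\in X$ with $\|x-\bar x\|\le\delta_j$. Hence $y_{1/j}\in\Gamma(x)$ for all such $x$. We may take $\delta_j$ decreasing in $j$.

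\textbf{Step 3: diagonal choice.} For each $k$, let $j(k)$ be the largest $j\le k$ with $\|x^k-\bar x\|\le\delta_j$. When no such $j$ exists, which can happen only for finitely many $k$, pick any $y^k\in\Gamma(x^k)$; this is possible because $\Gamma(x^k)\neq\emptyset$ by Assumption~\ref{assum1}(4). Otherwise, set $y^k:=y_{1/j(k)}\in\Gamma(x^k)$. Because $x^k\to\bar x$, for every fixed $J$ we eventually have $\|x^k-\bar x\|\le\delta_J$, so $j(k)\ge J$ for all large $k$. Thus $j(k)\to\infty$, and therefore $y^k\to\bar y$. This shows $\bar y\in\operatorname{Liminf}_{x\to\bar x}\Gamma(x)$, which is the required inner semicontinuity.

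\textbf{Main obstacle.} The only delicate point is the bookkeeping in the diagonal selection of Step 3, in particular handling the finitely many early indices for which no $\delta_j$ applies. The other steps follow directly from convexity and continuity.
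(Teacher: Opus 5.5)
Your proof is correct, but it takes a different route from the paper's.

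\textbf{The paper's route.} It fixes a uniform Slater margin $\eta>0$ with $c_i(x_k,\tilde y)\le-\eta$ for all large $k$, and sets $\varepsilon_k:=\max_i[c_i(x_k,\bar y)]_+\to0$. It then makes a single adaptive choice $t_k:=\varepsilon_k/(\varepsilon_k+\eta)$. Convexity of $c_i(x_k,\cdot)$ at the \emph{perturbed} point $x_k$ gives $c_i(x_k,y_k)\le(1-t_k)\varepsilon_k-t_k\eta=0$ in one line.

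\textbf{Your route.} You use convexity only at the base point $\bar x$ to make every $y_{1/j}$ strictly feasible there. You then use continuity of $c(\cdot,y_{1/j})$ at $\bar x$ to keep each fixed $y_{1/j}$ feasible on a neighborhood of $\bar x$, and finish with a diagonal selection. Your bookkeeping in Step~3 is sound, and the monotonicity of $\delta_j$ is not even needed there.

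\textbf{What each buys.} The paper's construction gives a quantitative estimate,
\[
\|y_k-\bar y\|\le\eta^{-1}\|\tilde y-\bar y\|\max_i[c_i(x_k,\bar y)]_+ .
\]
Since $c$ is locally Lipschitz, this yields $\dist(\bar y,\Gamma(x))=O(\|x-\bar x\|)$ for $x\in X$ near $\bar x$, which is more than mere inner semicontinuity. Your argument is purely qualitative, but it needs convexity of $c_i(x,\cdot)$ only at $x=\bar x$. It also isolates the actual mechanism: $\Gamma(\bar x)\subset\operatorname{cl}\{y\in Y: c(\bar x,y)<0\}$, combined with the fact that strict inequalities survive small perturbations of $x$.

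Your remark that $\tilde y\in\operatorname{ri}(Y)$ is not needed is correct. It applies equally to the paper's proof, which also uses only $\tilde y\in Y$.
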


Second, the same condition implies the GP\L{}CQ at every feasible inner point.

\begin{proposition}\label{thm:slater-gplcq}
Fix $\bar{x}\in X$ and assume that the Slater condition holds for the feasible system $y\in Y,\; c(\bar{x},y)\le 0$. Then the GP\L{}CQ holds at $(\bar x,\bar y)$ for any $\bar y\in \Gamma(\bar{x})$.
\end{proposition}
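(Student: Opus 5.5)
The plan is to derive the error bound from convexity of $p(x,\cdot)$ together with a uniform Slater point, and to control only the $y$-component of the residual. By the Remark following Definition~\ref{def:gplcq}, it suffices to establish the stronger $y$-residual bound
\[
\sqrt{p(x,y)}\le \frac{\gamma}{\beta}\bigl\|y-\mathcal{P}_Y\bigl(y-\beta\nabla_y p(x,y)\bigr)\bigr\|
\]
for all $(x,y)$ near $(\bar x,\bar y)$. This works because the norm of the joint residual in \eqref{GPL-eq} is at least the norm of its $y$-block, since $\mathcal{P}_{X\times Y}$ acts componentwise.

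\textbf{Step 1 (uniform Slater point).} Let $\tilde y\in\operatorname{ri}(Y)$ with $c(\bar x,\tilde y)<0$. By continuity of $c$ there exist $\delta>0$ and $\eta>0$ such that $c_i(x,\tilde y)\le-\eta$ for all $i\in[p]$ and all $x\in B_\delta(\bar x)\cap X$. Since $Y$ is compact, $D:=\operatorname{diam}(Y)<\infty$.

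\textbf{Step 2 (convexity inequality).} Fix $x$ and set $g(y):=p(x,y)=\tfrac12\|[c(x,y)]_+\|^2$. This function is convex and $C^1$ in $y$, with $\nabla_y p=\sum_i [c_i]_+\nabla_y c_i$. By convexity of each $c_i(x,\cdot)$,
\[
\langle \nabla_y p(x,y),\, y-\tilde y\rangle=\sum_i [c_i(x,y)]_+\langle\nabla_y c_i(x,y),y-\tilde y\rangle\ \ge\ \sum_i [c_i(x,y)]_+\bigl(c_i(x,y)-c_i(x,\tilde y)\bigr).
\]
The right-hand side is at least $\sum_i[c_i]_+^2+\eta\sum_i[c_i]_+\ge \eta\|[c(x,y)]_+\|$. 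This yields the key bound $\eta\sqrt{2p(x,y)}\le\langle\nabla_y p,\,y-\tilde y\rangle$.

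\textbf{Step 3 (pass from gradient to projected residual).} This is the main obstacle, because the gradient may be large while the projected residual $r:=y-\mathcal{P}_Y(y-\beta\nabla_y p)$ is small near the boundary of $Y$. Write $z:=\mathcal{P}_Y(y-\beta\nabla_y p)$. The projection characterization with the test point $\tilde y\in Y$ gives $\langle y-\beta\nabla_y p-z,\ \tilde y-z\rangle\le0$, that is,
\[
\beta\langle\nabla_y p,\ z-\tilde y\rangle\le\langle r,\ z-\tilde y\rangle\le D\|r\|.
\]
Hence
\[
\beta\langle\nabla_y p,y-\tilde y\rangle\le D\|r\|+\beta\langle\nabla_y p,r\rangle\le (D+\beta G)\|r\|,
\]
where $G:=\sup\|\nabla_y p\|$ over the compact neighborhood. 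This sup is finite because the gradients of $c$ are continuous and $Y$ is compact; if needed, shrink $\delta$ so that $B_\delta(\bar x)\cap X$ is bounded. Combining with Step~2 gives
\[
\sqrt{p(x,y)}\le\frac{D+\beta G}{\sqrt2\,\eta\,\beta}\,\|r\|,
\]
so we may take any $\beta>0$ and $\gamma:=(D+\beta G)/(\sqrt2\eta)$.

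Note that relative interiority of $\tilde y$ is only used to guarantee $\tilde y\in Y$; strict feasibility supplies $\eta$. Finally, apply the Remark's reduction to the joint residual to conclude \eqref{GPL-eq} at $(\bar x,\bar y)$ for any $\bar y\in\Gamma(\bar x)$. The argument in fact does not use feasibility of $\bar y$ beyond $\bar y\in Y$.
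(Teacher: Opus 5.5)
Your proof is correct, and it takes a genuinely different route from the paper's.

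\textbf{The paper's route.} The paper argues by contradiction. Fixing $\beta$, it assumes the bound fails on every neighborhood and extracts $(x_k,y_k)\to(\bar x,\bar y)$ with residual $r_k=o(\|[c(x_k,y_k)]_+\|)$. It normalizes $[c(x_k,y_k)]_+$ to get a limiting multiplier $\lambda\ge 0$ with $\|\lambda\|=1$, supported on the active set. Using outer semicontinuity of normal cones, it passes to the limit in the projection optimality conditions to obtain $-\sum_i\lambda_i\nabla_y c_i(\bar x,\bar y)\in\mathcal N_Y(\bar y)$. The Slater point then contradicts this through the gradient inequality at $(\bar x,\bar y)$. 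Like you, the paper ultimately uses only the $y$-block; the $x$-inclusion it derives plays no role.

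\textbf{Your route.} You work directly instead. The gradient inequality at every nearby $(x,y)$ gives $\eta\|[c(x,y)]_+\|\le\langle\nabla_y p,\,y-\tilde y\rangle$. Testing the projection inequality at $\tilde y$ converts this into a bound by the projected residual.

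\textbf{What each approach buys.} Your argument yields explicit constants, namely $\gamma=(\operatorname{diam}Y+\beta G)/(\sqrt2\,\eta)$ for every $\beta>0$. Its $x$-neighborhood is fixed by the Slater margin alone, and the bound holds on all of $Y$, so it is independent of $\bar y$; this is essentially a Robinson-type error bound. It also avoids the subsequence and normal-cone limiting machinery. The paper's contradiction template, in turn, uses convexity only at the limit point $(\bar x,\bar y)$. It would therefore survive if convexity in $y$ were replaced by a pointwise MFCQ-type condition (a direction into $Y$ along which the active $c_i(\bar x,\cdot)$ strictly decrease). The price is non-explicit, $\beta$-dependent constants on an unspecified neighborhood.
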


\section{Penalty-based Smooth Approximation}\label{approximaiton}
In this section, we construct a continuously differentiable approximation of the value function $\varphi$ and analyze its limiting behavior.

\subsection{Smooth Approximation Construction}
To handle the nonsmoothness induced by the nonlinear coupled constraints, we first introduce a quadratic penalty for the inner maximization problem.
This yields the following approximation of the value function in \eqref{deterministicMCC}:
\begin{equation*}
\max_{y\in Y} \left\{f(x,y)-\frac{\rho}{2}\big\|[c(x,y)]_+\big\|^2 \right\},
\end{equation*}
where $\rho > 0$ is a penalty parameter.
Although the quadratic penalty accounts for constraint violation, the resulting value function may remain nonsmooth because the inner maximizer need not be unique. We therefore add a quadratic regularization term:
\begin{equation*}
\LL_{\rho,\sigma}(x,y) :=  f(x,y) -\frac{\rho}{2}\big\|[c(x,y)]_+\big\|^2 -\frac{\sigma}{2}\|y\|^2,
\end{equation*}
where $\sigma > 0$ is a regularization parameter. The corresponding approximate value function is
\begin{equation}\label{smoothapproximation}
\varphi_{\rho,\sigma}(x)
:= \max_{y \in Y}\, \LL_{\rho,\sigma}(x,y).
\end{equation}
The differentiability of $\varphi_{\rho,\sigma}$ follows from Danskin's theorem
(see, e.g., \citep[Proposition~B.25]{bertsekas1997nonlinear}): the regularization term makes the inner maximizer unique, and $\LL_{\rho,\sigma}$ is smooth with respect to $x$. The resulting gradient formula is given below.
\begin{proposition}\label{differentiable}
For any \(\rho>0\) and \(\sigma>0\), \(\varphi_{\rho,\sigma}\) is continuously differentiable on \(X\).
Moreover, for every \(x\in X\),
\begin{equation}\label{differentiability}
    \nabla \varphi_{\rho,\sigma}(x)
    = \nabla_x \LL_{\rho,\sigma}\bigl(x, y^*_{\rho,\sigma}(x)\bigr),
\end{equation}
where \(y^*_{\rho,\sigma}(x):= \arg\max_{y\in Y} \LL_{\rho,\sigma}(x,y)\) denotes the unique inner maximizer.
\end{proposition}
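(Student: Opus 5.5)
We will deduce the statement from Danskin's theorem \citep[Proposition~B.25]{bertsekas1997nonlinear}. This requires three facts about $\LL_{\rho,\sigma}$ on $U_x\times Y$: joint continuity with a continuous partial gradient $\nabla_x\LL_{\rho,\sigma}$, compactness of $Y$, and uniqueness of the inner maximizer. With these in hand we obtain differentiability, the formula \eqref{differentiability}, and continuity of the gradient.

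\textbf{Step 1: smoothness of the penalty.} The map $t\mapsto \frac12[t]_+^2$ is continuously differentiable with derivative $[t]_+$. Hence $\frac{\rho}{2}\|[c(x,y)]_+\|^2$ is continuously differentiable on $U_x\times U_y$, with gradient $\rho\sum_i [c_i(x,y)]_+\nabla c_i(x,y)$. Combined with Assumption~\ref{assum1}(2), this shows that $\LL_{\rho,\sigma}$ is $C^1$ on $U_x\times U_y$. In particular, $\LL_{\rho,\sigma}$ and $\nabla_x\LL_{\rho,\sigma}$ are jointly continuous there.

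\textbf{Step 2: uniqueness of the maximizer.} Fix $x\in U_x$ (in particular any $x\in X$). For each $i$, the function $c_i(x,\cdot)$ is convex on $Y$, and $t\mapsto\frac12[t]_+^2$ is convex and nondecreasing. Their composition $\frac12[c_i(x,\cdot)]_+^2$ is therefore convex on $Y$. Since $f(x,\cdot)$ is concave by Assumption~\ref{assum1}(3), the function $\LL_{\rho,\sigma}(x,\cdot)$ is the sum of a concave function and the $\sigma$-strongly concave term $-\frac{\sigma}{2}\|y\|^2$, so it is $\sigma$-strongly concave on the convex compact set $Y$. A continuous function attains its maximum on the compact set $Y$, and strong concavity forces the maximizer to be unique. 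This justifies the definition of $y^*_{\rho,\sigma}(x)$.

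\textbf{Step 3: applying Danskin's theorem and obtaining continuity.} Danskin's theorem applied to $\max_{y\in Y}\LL_{\rho,\sigma}(x,y)$ (equivalently, $-\min_{y\in Y}(-\LL_{\rho,\sigma})$) on the open set $U_x$ gives directional differentiability. Because the maximizer set is the singleton $\{y^*_{\rho,\sigma}(x)\}$, the result is differentiability with gradient $\nabla_x\LL_{\rho,\sigma}(x,y^*_{\rho,\sigma}(x))$. It remains to show that $y^*_{\rho,\sigma}$ is continuous. Take $x_k\to x$. Any accumulation point $\bar y$ of $y^*_{\rho,\sigma}(x_k)$ exists in $Y$ by compactness. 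Passing to the limit in $\LL_{\rho,\sigma}(x_k,y^*_{\rho,\sigma}(x_k))\ge\LL_{\rho,\sigma}(x_k,y)$ for all $y\in Y$ shows that $\bar y$ is a maximizer at $x$, and uniqueness then gives $\bar y=y^*_{\rho,\sigma}(x)$. Composing this continuity with the joint continuity of $\nabla_x\LL_{\rho,\sigma}$ yields continuity of $\nabla\varphi_{\rho,\sigma}$ on $X$.

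\textbf{Main obstacle.} The only nonroutine point is the convexity of $[c_i(x,\cdot)]_+^2$. It relies on the monotonicity of the outer function; convexity of $c_i$ alone would not suffice for an arbitrary convex outer function. Differentiating on the open neighbourhood $U_x$ avoids boundary issues at points of $X$.
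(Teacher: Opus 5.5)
Your proposal is correct and takes the same route as the paper, which simply invokes Danskin's theorem, using uniqueness of the inner maximizer (from $\sigma$-strong concavity) and smoothness of $\LL_{\rho,\sigma}$ in $x$; you also supply details the paper leaves implicit: the $C^1$ property of the penalty, convexity of $\tfrac12[c_i(x,\cdot)]_+^2$, and continuity of $y^*_{\rho,\sigma}$, which gives continuity of the gradient. One caveat, shared with the paper: \citep[Proposition~B.25]{bertsekas1997nonlinear} is stated for functions convex in $x$, which $\LL_{\rho,\sigma}(\cdot,y)$ need not be, so you should invoke the general Danskin theorem (e.g., \citep[Theorem~4.13]{bonnans2013perturbation}), whose hypotheses are exactly the ones you verify: joint continuity of $\LL_{\rho,\sigma}$ and $\nabla_x\LL_{\rho,\sigma}$, compactness of $Y$, and uniqueness of the maximizer.
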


This penalty-based approximation motivates solving the MCC~\eqref{MCC} through a sequence of smooth problems of the form
\begin{equation}\label{approxproblem}
    \min_{x\in X} \; \varphi_{\rho_k,\sigma_k}(x),
\end{equation}
with parameter sequences $\rho_k \to \infty$ and $\sigma_k \to 0$.
The next two subsections characterize the asymptotic relationship between $\varphi_{\rho_k,\sigma_k}$ and $\varphi$ from two perspectives: convergence of minimizers and convergence of stationary points.

\subsection{Convergence of Minimizers}\label{section:epiconvergence}

In this subsection, we establish the asymptotic convergence of global and local minimizers of the smooth approximations. The analysis relies on epi-convergence, as introduced in Definition~\ref{def:epiconvergence}. We first state two auxiliary lemmas that establish the epi-convergence of $\varphi_{\rho_k,\sigma_k}$.

\begin{lemma}\label{limsup}
    Let $\rho_k\to\infty$ and $\sigma_k\to 0$ as $k\to\infty$. Then, for any fixed $x\in X$,
    \begin{equation*}
       \limsup_{k\to\infty} \, \varphi_{\rho_k,\sigma_k}(x)\le \varphi(x).
    \end{equation*}
\end{lemma}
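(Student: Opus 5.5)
Fix $x\in X$ and let $y_k:=y^*_{\rho_k,\sigma_k}(x)\in Y$ denote the unique inner maximizer from Proposition~\ref{differentiable}, so that $\varphi_{\rho_k,\sigma_k}(x)=\LL_{\rho_k,\sigma_k}(x,y_k)$. The plan is to show that every cluster point of $\{y_k\}$ lies in $\Gamma(x)$ and that, along any subsequence realizing the $\limsup$, the values are bounded above by $f(x,\bar y)\le\varphi(x)$ for some limit point $\bar y$. Since $Y$ is compact, such subsequences and cluster points always exist.

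\textbf{Step 1: a uniform lower bound.} Pick any $\hat y\in\Gamma(x)$, which is nonempty by Assumption~\ref{assum1}. Then $[c(x,\hat y)]_+=0$, and the optimality of $y_k$ gives
\[
\LL_{\rho_k,\sigma_k}(x,y_k)\ \ge\ f(x,\hat y)-\tfrac{\sigma_k}{2}\|\hat y\|^2\ \ge\ \min_{y\in Y}f(x,y)-\tfrac{\sigma_k}{2}\max_{y\in Y}\|y\|^2 .
\]
Continuity of $f$ and compactness of $Y$ make the right-hand side bounded below, say by $-M$, for all $k$ large (as $\sigma_k\to0$).

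\textbf{Step 2: asymptotic feasibility.} Rearranging the definition of $\LL_{\rho_k,\sigma_k}$ gives
\[
\tfrac{\rho_k}{2}\|[c(x,y_k)]_+\|^2\ \le\ f(x,y_k)-\LL_{\rho_k,\sigma_k}(x,y_k)\ \le\ \max_{Y}f(x,\cdot)+M .
\]
Hence $\|[c(x,y_k)]_+\|^2=O(1/\rho_k)\to0$. Now take a subsequence along which $\varphi_{\rho_k,\sigma_k}(x)$ converges to the $\limsup$, and refine it so that $y_k\to\bar y\in Y$. Continuity of $c$ then yields $[c(x,\bar y)]_+=0$, so $\bar y\in\Gamma(x)$.

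\textbf{Step 3: conclude.} Drop the two nonpositive terms in $\LL_{\rho_k,\sigma_k}$ to get $\varphi_{\rho_k,\sigma_k}(x)\le f(x,y_k)$. Passing to the limit along the subsequence gives $\limsup_k\varphi_{\rho_k,\sigma_k}(x)\le f(x,\bar y)\le\varphi(x)$, because $\bar y\in\Gamma(x)$.

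The only delicate point is Step 2, namely obtaining a bound on the penalty term that is uniform in $k$. Step 1 supplies it through the comparison point $\hat y$, and this is the only place where nonemptiness of $\Gamma(x)$ is used. The argument needs neither concavity of $f$ nor convexity of $c$.
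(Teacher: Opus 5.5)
Your proof is correct and follows essentially the same route as the paper. The paper splits the argument into a separate feasibility lemma, stated for sequences $x_k\to\bar x$, which compares with a feasible point to obtain $\|[c(x_k,y^*_k(x_k))]_+\|\to0$. It then argues by contradiction along a subsequence with $y^*_{k_i}(x)\to\hat y$, dropping the nonpositive penalty and regularization terms exactly as in your Step 3.
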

The lower-bound argument requires the following continuity assumption on the value function.
\begin{assumption}\label{ass:lsc}
The value function $\varphi:X\to\mathbb{R}$ is lower semicontinuous on $X$.
\end{assumption}

\begin{remark}
For a fixed point $\bar x\in X$, lower semicontinuity of $\varphi$ at
$\bar x$ is guaranteed by inner semicontinuity of the feasible-set
mapping $\Gamma$ at $\bar x$; see Lemma~\ref{isc and lsc}. This requirement can be weakened to local graph-point inner 
semicontinuity of $\Gamma$ at $(\bar x,y^*(\bar x))$
\citep[Definition~1.63]{mordukhovich2006variational} for some $y^*(\bar x)\in \arg\max_{y\in\Gamma(\bar x)} f(\bar x,y)$: for every sequence $x_k\to\bar x$, there exists a sequence
$y_k\in\Gamma(x_k)$ such that $y_k\to y^*(\bar x)$.
\end{remark}
\begin{remark}
GP\L{}CQ at the pair
$(\bar x,y^*(\bar x))$ alone does not imply lower semicontinuity of
$\varphi$ at $\bar x$. A simple example is the MCC with
\[
\begin{aligned}
    \min_{x\in [0,1]}\max_{\substack{y\in [0,2]\\ x(y-1)\le 0}}
    y.
\end{aligned}
\]
In this case, $\Gamma(0)=[0,2]$ and $\Gamma(x)=[0,1]$ for $x>0$, so
$\varphi(0)=2$ while $\varphi(x)=1$ for $x>0$. Thus $\varphi$ is not lower
semicontinuous at $0$, although GP\L{}CQ holds at the pairs
$(0,2)$ and $(x,1)$ for $x>0$.
\end{remark}

\begin{lemma}\label{liminf}
    Suppose $\varphi$ is lower semicontinuous on $X$. Let $\rho_k\to\infty$ and $\sigma_k\to 0$ as $k\to\infty$. Then, for any sequence $\{x_k\}\subset X$ with $x_k \to \bar{x}$, we have
    \begin{align}
        \underset{k\to\infty}{\lim\inf} \;\varphi_{\rho_k,\sigma_k}(x_k)\ge \varphi(\bar{x}).
    \end{align}
\end{lemma}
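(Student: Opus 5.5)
The plan is to compare $\varphi_{\rho_k,\sigma_k}$ with $\varphi$ pointwise from below, using only the inner feasible points. The resulting one-sided bound reduces the claim to lower semicontinuity of $\varphi$. The penalty term $-\frac{\rho}{2}\|[c(x,y)]_+\|^2$ vanishes on the feasible set $\Gamma(x)$, so restricting the inner maximization in \eqref{smoothapproximation} to $\Gamma(x)\subset Y$ can only lower the value, and the penalty parameter then plays no role.

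\textbf{Step 1 (uniform lower bound).} Fix $x\in X$. By Assumption~\ref{assum1}, $\Gamma(x)$ is nonempty and compact, and $\varphi(x)$ is attained at some $y_x\in\Gamma(x)$. Since $c(x,y_x)\le 0$, we have $[c(x,y_x)]_+=0$. Hence, for every $\rho>0$ and $\sigma>0$,
\[
\varphi_{\rho,\sigma}(x)\;\ge\;\LL_{\rho,\sigma}(x,y_x)\;=\;f(x,y_x)-\frac{\sigma}{2}\|y_x\|^2\;\ge\;\varphi(x)-\frac{\sigma}{2}D_Y^2,
\]
where $D_Y:=\max_{y\in Y}\|y\|<\infty$ by compactness of $Y$. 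This bound is uniform in $x\in X$ and in $\rho$.

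\textbf{Step 2 (pass to the limit).} Apply Step 1 with $x=x_k$, $\rho=\rho_k$, $\sigma=\sigma_k$. This gives $\varphi_{\rho_k,\sigma_k}(x_k)\ge\varphi(x_k)-\frac{\sigma_k}{2}D_Y^2$. Taking $\liminf$ and using $\sigma_k\to0$, we get
\[
\liminf_{k\to\infty}\varphi_{\rho_k,\sigma_k}(x_k)\;\ge\;\liminf_{k\to\infty}\varphi(x_k)\;\ge\;\varphi(\bar x),
\]
where the last inequality is the assumed lower semicontinuity of $\varphi$ at $\bar x$. Here $\bar x\in X$ because $X$ is closed and $\{x_k\}\subset X$.

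There is no serious obstacle in this argument. The only points to check are that a feasible maximizer exists for each $x_k$ (Assumption~\ref{assum1}(4) together with compactness) and that the regularization error is controlled uniformly, which compactness of $Y$ provides. All the genuine difficulty is absorbed into the lower semicontinuity hypothesis (Assumption~\ref{ass:lsc}). The condition $\rho_k\to\infty$ is not needed for this direction; it is used only in the companion upper bound, Lemma~\ref{limsup}.
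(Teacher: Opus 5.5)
Your proposal is correct and follows essentially the same argument as the paper: plug a feasible inner maximizer of $\varphi(x_k)$ into $\LL_{\rho_k,\sigma_k}(x_k,\cdot)$ so the penalty vanishes, bound the regularization term uniformly by compactness of $Y$, and conclude via $\sigma_k\to 0$ and lower semicontinuity of $\varphi$. Your remark that $\rho_k\to\infty$ is not needed for this direction is also accurate.
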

Combining Lemmas~\ref{limsup} and~\ref{liminf}, we obtain the epi-convergence of $\varphi_{\rho_k,\sigma_k}$ to $\varphi$. Lemma~\ref{lem:epi-minimizers} then yields convergence of global minimizers.
\begin{theorem}\label{epiconvergence}
     Assume $\varphi$ is lower semicontinuous on $X$. Let $\rho_k\to\infty$ and $\sigma_k\to 0$, and let $x_k \in \mathrm{argmin}_{x \in X}\;\varphi_{\rho_k,\sigma_k}(x)$. Then any accumulation point $\bar{x}$ of $\{x_k\}$ is an optimal solution of the MCC~\eqref{MCC}, i.e., $\bar{x} \in \mathrm{argmin}_{x \in X} \varphi(x)$. 
\end{theorem}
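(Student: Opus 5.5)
The plan is to show that $\varphi_{\rho_k,\sigma_k}\xrightarrow{e}\varphi$ on $X$ in the sense of Definition~\ref{def:epiconvergence}, and then pass to the limit along the subsequence converging to $\bar x$, using Lemma~\ref{lem:epi-minimizers}.

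\textbf{Step 1: epi-convergence.} Fix $\bar x\in X$.
\begin{itemize}
\item Condition~1 of Definition~\ref{def:epiconvergence} is exactly Lemma~\ref{liminf}. Its hypothesis, lower semicontinuity of $\varphi$, holds by assumption.
\item For condition~2, take the constant recovery sequence $x_k\equiv\bar x$. Lemma~\ref{limsup} then gives $\limsup_k\varphi_{\rho_k,\sigma_k}(\bar x)\le\varphi(\bar x)$.
\end{itemize}
Hence $\varphi_{\rho_k,\sigma_k}\xrightarrow{e}\varphi$ on $X$.

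\textbf{Step 2: passing to a subsequence.} Let $\bar x$ be an accumulation point, with $x_{k_j}\to\bar x$. Since $X$ is closed, $\bar x\in X$. Lemma~\ref{lem:epi-minimizers} is stated for the full sequence, so we first check that epi-convergence passes to subsequences. This holds because both conditions in the sequential characterization are inherited:
\begin{itemize}
\item the liminf inequality along a subsequence follows from Lemma~\ref{liminf} applied with the parameters $\rho_{k_j}\to\infty$, $\sigma_{k_j}\to0$;
\item the limsup inequality likewise follows from Lemma~\ref{limsup} with the same parameters.
\end{itemize}
Thus $\varphi_{\rho_{k_j},\sigma_{k_j}}\xrightarrow{e}\varphi$. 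Since $x_{k_j}\in\arg\min_X\varphi_{\rho_{k_j},\sigma_{k_j}}$ and $x_{k_j}\to\bar x$, Lemma~\ref{lem:epi-minimizers} yields $\bar x\in\arg\min_X\varphi$.

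\textbf{Direct check.} The epi-convergence result can also be bypassed. For any $x\in X$,
\[
\varphi(\bar x)\le\liminf_j\varphi_{\rho_{k_j},\sigma_{k_j}}(x_{k_j})\le\limsup_j\varphi_{\rho_{k_j},\sigma_{k_j}}(x)\le\varphi(x),
\]
where the first inequality is Lemma~\ref{liminf}, the second uses the minimality of $x_{k_j}$, and the third is Lemma~\ref{limsup}.

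\textbf{Conclusion.} Because $\min_x\max_y$ in~\eqref{MCC} is precisely the minimization of $\varphi$ over $X$, it follows that $\bar x$ solves the MCC. There is no real obstacle here: all the substantive work lies in Lemmas~\ref{limsup} and~\ref{liminf}. The only point requiring care is the subsequence bookkeeping, and the direct chain of inequalities above settles it.
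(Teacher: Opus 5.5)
Your proposal is correct and follows the paper's argument: epi-convergence comes from Lemma~\ref{liminf} (the liminf condition) and Lemma~\ref{limsup} (with the constant recovery sequence), and Lemma~\ref{lem:epi-minimizers} then gives the conclusion. Two of your additions are valid and go slightly beyond the paper: you restrict explicitly to the subsequence $\{k_j\}$, which the paper passes over silently, and you give a direct chain of inequalities, which is simply the proof of Lemma~\ref{lem:epi-minimizers} written out.
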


The same argument yields convergence of local minimizers when the local optimality neighborhoods have a common positive radius.
\begin{corollary}
    Suppose the assumptions of Theorem~\ref{epiconvergence} hold. Let $x_k$ be a local minimizer of $\varphi_{\rho_k,\sigma_k}$, i.e., there exist constants $\delta_k>0$ such that $x_k\in \mathrm{argmin}_{x \in X\cap B_{\delta_k}(x_k)}\;\varphi_{\rho_k,\sigma_k}(x)$. If $\delta_k\ge\delta>0$ for all $k$, then any accumulation point $\bar{x}$ of $\{x_k\}$ is a local minimizer of the MCC~\eqref{MCC}.
\end{corollary}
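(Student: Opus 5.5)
We localize the argument of Theorem~\ref{epiconvergence} to a fixed closed ball around the limit. Passing to a subsequence, assume $x_k\to\bar x$. Because $\delta_k\ge\delta$, there is $K$ such that $\|x_k-\bar x\|\le\delta/2$ for all $k\ge K$. For such $k$ the triangle inequality gives $B_{\delta/2}(\bar x)\subset B_{\delta}(x_k)\subset B_{\delta_k}(x_k)$. Since $x_k$ minimizes $\varphi_{\rho_k,\sigma_k}$ over $X\cap B_{\delta_k}(x_k)$, it also minimizes it over the smaller set $X_\delta:=X\cap B_{\delta/2}(\bar x)$, and $x_k\in X_\delta$. Thus for $k\ge K$,
\[
x_k\in\arg\min_{x\in X_\delta}\varphi_{\rho_k,\sigma_k}(x),
\]
with a domain that no longer depends on $k$. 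The goal is then to show $\bar x\in\arg\min_{x\in X_\delta}\varphi(x)$, which is exactly local minimality of $\bar x$ for the MCC~\eqref{MCC}.

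Next we establish epi-convergence of the restrictions to $X_\delta$. The liminf condition of Definition~\ref{def:epiconvergence} holds on $X_\delta$ directly from Lemma~\ref{liminf}, since sequences in $X_\delta$ are sequences in $X$. For the limsup condition, take the constant sequence $x_k\equiv x$ for $x\in X_\delta$; Lemma~\ref{limsup} gives $\limsup_k\varphi_{\rho_k,\sigma_k}(x)\le\varphi(x)$. Hence $\varphi_{\rho_k,\sigma_k}\xrightarrow{e}\varphi$ on $X_\delta$.

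We then apply Lemma~\ref{lem:epi-minimizers} with $X$ replaced by the closed set $X_\delta$, along the subsequence with $k\ge K$ (tails of epi-convergent sequences are epi-convergent). This gives $\bar x\in\arg\min_{X_\delta}\varphi$. If one prefers not to invoke the lemma on a general closed set, the direct argument is short. For any $x\in X_\delta$,
\[
\varphi(\bar x)\le\liminf_k\varphi_{\rho_k,\sigma_k}(x_k)\le\limsup_k\varphi_{\rho_k,\sigma_k}(x)\le\varphi(x).
\]
The first inequality is Lemma~\ref{liminf}, the second is minimality of $x_k$ on $X_\delta$, and the third is Lemma~\ref{limsup}.

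The only delicate step is the first one: the moving neighborhoods $B_{\delta_k}(x_k)$ must contain a common fixed neighborhood of $\bar x$. This is exactly where the uniform lower bound $\delta_k\ge\delta$ is used, together with the $\delta/2$ shrinkage. Everything else reuses the global argument verbatim.
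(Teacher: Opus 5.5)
Your proof is correct and takes the route the paper intends. The paper gives no separate proof and only remarks that ``the same argument'' as in Theorem~\ref{epiconvergence} applies; your localization to the fixed set $X\cap B_{\delta/2}(\bar x)$ (via $\delta_k\ge\delta$ and the triangle inequality), followed by the chain of inequalities from Lemmas~\ref{liminf} and~\ref{limsup}, makes that argument explicit, in the same way the paper restricts to $X_r$ in the proofs of Theorems~\ref{thm:local-to-type1} and~\ref{thm:strict-to-type2}.
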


We also characterize the limiting behavior of the corresponding inner maximizers.
\begin{proposition}
\label{ystarconvergence}
         Suppose that $\varphi$ is lower semicontinuous on $X$. Let $\rho_k\to\infty$ and $\sigma_k\to 0$. Then, for any sequence $\{x_k\}\subset X$ with $x_k \to \bar{x}$, every accumulation point $\bar{y}$ of $\{y^*_{\rho_k,\sigma_k}(x_k)\}$ satisfies $\bar{y}\in\arg\max_{y\in Y}\{f(\bar{x},y)\mid c(\bar{x},y)\le 0\}$.
\end{proposition}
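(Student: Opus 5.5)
Write $y_k:=y^*_{\rho_k,\sigma_k}(x_k)$ and pass to a subsequence along which $y_k\to\bar y$. Since $Y$ is compact and closed, $\bar y\in Y$. The plan has two parts: first show that $\bar y$ is feasible, i.e.\ $c(\bar x,\bar y)\le 0$; then show that $f(\bar x,\bar y)\ge\varphi(\bar x)$. Together these give the claim.

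\textbf{Feasibility.} Fix any $\hat y\in\Gamma(x_k)$, which is nonempty by Assumption~\ref{assum1}. Optimality of $y_k$ gives $\LL_{\rho_k,\sigma_k}(x_k,y_k)\ge \LL_{\rho_k,\sigma_k}(x_k,\hat y)$. Since $\hat y$ is feasible, the right-hand side equals $f(x_k,\hat y)-\frac{\sigma_k}{2}\|\hat y\|^2$. The quantities $f$ and $\|y\|^2$ are uniformly bounded on the compact set $\{x_k\}\cup\{\bar x\}$ times $Y$, say by $M$. Rearranging therefore yields
\[
\frac{\rho_k}{2}\|[c(x_k,y_k)]_+\|^2\le 2M+\sigma_k M .
\]
Dividing by $\rho_k\to\infty$ shows $\|[c(x_k,y_k)]_+\|\to 0$. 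By continuity of $c$, this gives $c(\bar x,\bar y)\le 0$, so $\bar y\in\Gamma(\bar x)$ and hence $f(\bar x,\bar y)\le\varphi(\bar x)$.

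\textbf{Optimality.} For the reverse inequality, drop the nonpositive penalty term:
\[
f(x_k,y_k)-\frac{\sigma_k}{2}\|y_k\|^2\ \ge\ \LL_{\rho_k,\sigma_k}(x_k,y_k)=\varphi_{\rho_k,\sigma_k}(x_k).
\]
Take $\liminf$ along the subsequence. The left-hand side tends to $f(\bar x,\bar y)$ because $f$ is continuous, $\sigma_k\to0$ and $Y$ is bounded. For the right-hand side, Lemma~\ref{liminf} (which uses the lower semicontinuity of $\varphi$) gives $\liminf_k\varphi_{\rho_k,\sigma_k}(x_k)\ge\varphi(\bar x)$. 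Hence $f(\bar x,\bar y)\ge\varphi(\bar x)$.

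Combining both parts, $\bar y$ is feasible and attains $\varphi(\bar x)$, so $\bar y\in\arg\max_{y\in Y}\{f(\bar x,y)\mid c(\bar x,y)\le0\}$.

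The only delicate step is the lower bound on the value, which is exactly where lower semicontinuity of $\varphi$ enters. That work is already done by Lemma~\ref{liminf}, so the proof reduces to the bookkeeping above.
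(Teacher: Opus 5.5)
Your proof is correct and follows essentially the paper's argument: feasibility of $\bar y$ comes from the penalty bound (the content of Lemma~\ref{feasibility}, which you re-derive using $\hat y\in\Gamma(x_k)$), and the value bound $f(\bar x,\bar y)\ge\varphi(\bar x)$ comes from $f(x_k,y_k)\ge\varphi_{\rho_k,\sigma_k}(x_k)$ together with lower semicontinuity. The only difference is that you apply Lemma~\ref{liminf} directly to the $\liminf$. The paper instead first proves upper semicontinuity of $\varphi$ at $\bar x$ via outer semicontinuity of $\Gamma$, so that $\lim_k\varphi(x_k)$ exists; your version shows that this step is unnecessary.
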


\subsection{Convergence of Stationary Points}

In this subsection, we characterize the asymptotic behavior of stationary points of the smooth approximations.
Since $\varphi_{\rho,\sigma}$ is smooth but generally nonconvex, first-order methods applied to the approximate problems naturally generate sequences with asymptotically vanishing stationarity residuals, namely,
\[
u_k\in \nabla \varphi_{\rho_k,\sigma_k}(x_k)+\mathcal{N}_X(x_k),
\qquad u_k\to 0.
\]
As $\rho_k\to\infty$ and $\sigma_k\to0$, accumulation points of such approximate stationary sequences satisfy the KKT stationarity condition~\eqref{KKT}; see Theorem~\ref{thm:approx-to-type1}.

Beyond standard KKT stationarity, the penalty approximation yields additional sequential information that can rule out certain spurious KKT points.

We therefore introduce an enhanced KKT condition tailored to the proposed smooth approximation.
This condition strengthens the classical KKT system~\eqref{KKT} by requiring a vanishing-residual sequence generated by the penalized smooth approximations.
\begin{definition}\label{def:type1-ekkt}
A pair $(\bar x,\bar y)$ is called a \emph{Type-I enhanced KKT point}
of~\eqref{MCC} if there exists a multiplier $\bar\lambda\in\IR^p_+$ such that
$(\bar x,\bar y,\bar\lambda)$ satisfies the KKT system~\eqref{KKT}.
Moreover, there exist sequences $\{\rho_k\},\{\sigma_k\}\subset(0,\infty)$,
$\{x_k\}\subset X$, and $\{u_k\}\subset\IR^n$ such that
$\rho_k\to\infty$, $\sigma_k\to0$, and, with
$y_k^*:=y^*_{\rho_k,\sigma_k}(x_k)$,
$x_k\to\bar x$, $y_k^*\to\bar y$, $u_k\to0$, and
\[
u_k\in \nabla \varphi_{\rho_k,\sigma_k}(x_k)+\mathcal{N}_X(x_k),
\]
while the active limiting multiplier indices are strictly violated along the sequence:
\[
c_i(x_k,y_k^*)>0
\qquad\text{for all }k\text{ and all }i\in I^+(\bar\lambda):=\{j\in[p]:\bar\lambda_j>0\}.
\]
\end{definition}

The following theorem establishes Type-I enhanced KKT as a necessary condition for local minimizers of the MCC problem~\eqref{MCC}.
\begin{theorem}\label{thm:local-to-type1}
Assume that $\varphi$ is lower semicontinuous on $X$. Let $x^*$ be a local minimizer of the MCC problem~\eqref{MCC}, namely, there exists $\delta>0$ such that
$\varphi(x^*)\le \varphi(x),
\; \forall x\in X\cap B_{\delta}(x^*).$
Assume that GP\L{}CQ holds at every $(x^*,\hat{y})$ with
$\hat{y}\in\arg\max_{y\in Y}\{f(x^*,y)\mid c(x^*,y)\le 0\}.$
Then there exists a point $y^*\in\arg\max_{y\in Y}\{f(x^*,y)\mid c(x^*,y)\le 0\}$
such that $(x^*,y^*)$ is a Type-I enhanced KKT point of~\eqref{MCC}.
\end{theorem}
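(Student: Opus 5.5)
We would localize with a classical penalty trick that makes $x^*$ the unique local minimizer. Fix $\delta$ from the statement and consider the perturbed problems
\[
\min_{x\in X\cap B_{\delta}(x^*)}\ \varphi_{\rho_k,\sigma_k}(x)+\tfrac12\|x-x^*\|^2,
\]
with $\rho_k\to\infty$, $\sigma_k\to0$. Each has a global minimizer $x_k$, since $\varphi_{\rho_k,\sigma_k}$ is continuous by Proposition~\ref{differentiable} and the feasible set is compact. Lemmas~\ref{limsup} and~\ref{liminf} (with Assumption~\ref{ass:lsc}) give epi-convergence of the perturbed objectives to $\varphi+\tfrac12\|\cdot-x^*\|^2$ on $X\cap B_\delta(x^*)$. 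By Lemma~\ref{lem:epi-minimizers}, every accumulation point of $\{x_k\}$ minimizes this limit function over $X\cap B_\delta(x^*)$. Since $x^*$ is a local minimizer of $\varphi$, it is the \emph{unique} minimizer of the limit, so $x_k\to x^*$. For large $k$, $x_k$ lies in the interior of $B_\delta(x^*)$, the ball constraint is inactive, and first-order optimality gives
\[
u_k:=-(x_k-x^*)\in\nabla\varphi_{\rho_k,\sigma_k}(x_k)+\mathcal N_X(x_k),\qquad u_k\to0.
\]
Passing to a subsequence, $y_k^*:=y^*_{\rho_k,\sigma_k}(x_k)\to y^*$, and Proposition~\ref{ystarconvergence} places $y^*$ in the solution set of the inner problem at $x^*$.

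Next we recover multipliers and pass to the limit. Set $\lambda_k:=\rho_k[c(x_k,y_k^*)]_+$. Proposition~\ref{differentiable} then gives
\[
u_k\in\nabla_x f(x_k,y_k^*)-\nabla_x c(x_k,y_k^*)^\top\lambda_k+\mathcal N_X(x_k),
\]
and optimality of $y_k^*$ gives
\[
0\in-\nabla_y f+\nabla_y c^\top\lambda_k+\sigma_k y_k^*+\mathcal N_Y(y_k^*).
\]
The key step is boundedness of $\{\lambda_k\}$, and for this we use GP\L{}CQ at $(x^*,y^*)$. Put $p_k:=p(x_k,y_k^*)$. Then $\nabla p=\nabla c^\top[c]_+$ and $\|\lambda_k\|=\rho_k\sqrt{2p_k}$. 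Write $z_k=(x_k,y_k^*)$ and let $d_k$ be the projected residual in \eqref{GPL-eq}. The two inclusions above say that $z_k$ is, up to an error of order $\|u_k\|+\sigma_k+\|\nabla f\|$, a fixed point of the projected map with direction $\rho_k(\nabla_x p,-\nabla_y p)$. The projection is nonexpansive, and the residual $\beta^{-1}\|z-\mathcal P(z+\beta v)\|$ is monotone in $\beta$. Taking step $\beta/\rho_k$ and comparing, we expect $\rho_k\beta^{-1}\|d_k\|\le C$, with $C$ depending on bounds of $\nabla f$ over the compact set, $\|u_k\|$ and $\sigma_k\|y_k^*\|$. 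Combined with \eqref{GPL-eq}, this yields $\rho_k\sqrt{p_k}\le \gamma C$, so $\{\lambda_k\}$ is bounded.

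This estimate is the step I expect to be the main obstacle. The difficulty is converting the inexact inclusions, which live at scale $\rho_k$, into a bound on the residual at the fixed scale $\beta$. We would first try the standard inequality
\[
\|z-\mathcal P(z+tv)\|\ \ge\ \min\{1,t/\beta\}\,\|z-\mathcal P(z+\beta v)\|,
\]
together with nonexpansiveness of $\mathcal P_{X\times Y}$ to absorb the $f$-terms.

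Once boundedness holds, take $\lambda_k\to\bar\lambda\ge0$ along a subsequence. The normal cone mappings are closed, so the limit gives both stationarity conditions in \eqref{KKT}. Feasibility follows because $y^*\in\Gamma(x^*)$. Complementarity follows because $\lambda_{k,i}>0$ forces $c_i(x_k,y_k^*)>0$, so $\bar\lambda_i>0$ implies $c_i(x^*,y^*)\ge0$, and hence $c_i(x^*,y^*)=0$. Finally, for each $i\in I^+(\bar\lambda)$ we have $\lambda_{k,i}\to\bar\lambda_i>0$, so $c_i(x_k,y_k^*)>0$ for all large $k$. Discarding finitely many terms gives the strict-violation requirement for all $k$ and all $i\in I^+(\bar\lambda)$, which shows that $(x^*,y^*)$ is a Type-I enhanced KKT point.
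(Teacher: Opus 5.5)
Your proof is correct, but it localizes differently from the paper.

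\textbf{The paper's route.} The paper restricts to $X_r:=X\cap B_r(x^*)$ with $r<\delta$. It uses the epi-convergence lemmas to show that the gap $\varphi_k(x^*)-\min_{X_r}\varphi_k$ tends to zero, so $x^*$ itself is a $\delta_k^2$-minimizer of $\varphi_k$ on $X_r$ with $\delta_k\to0$. Ekeland's variational principle then produces nearby points $x_k$ with $u_k\in\nabla\varphi_k(x_k)+\mathcal N_X(x_k)$, $\|u_k\|\le\delta_k$, and $\|x_k-x^*\|\le\delta_k$.

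\textbf{Your route.} You add the proximal term $\tfrac12\|x-x^*\|^2$, the classical Hestenes/Bertsekas device behind enhanced Fritz John conditions. This makes $x^*$ the unique minimizer of the limit problem. Exact minimizers of the perturbed smooth problems then converge to $x^*$, and Fermat's rule gives the explicit residual $u_k=x^*-x_k$.

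\textbf{Comparison.} Your version is more elementary: it needs neither Ekeland's principle nor a separate argument that the optimal values $\min_{X_r}\varphi_k$ converge to $\varphi(x^*)$. The paper's version keeps the unperturbed $\varphi_k$ and ties the residual size to the optimality gap. Neither yields $\rho_k\|u_k\|\to0$, which is consistent with the paper's remark that Type-II can fail at non-strict local minimizers.

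\textbf{The multiplier half.} Everything after $x_k\to x^*$, $u_k\to0$, and $y_k^*\to y^*\in\arg\max$ re-proves Theorem~\ref{thm:approx-to-type1}, which the paper simply cites at that point. The step you flag as the main obstacle is not one. The projection fixed-point identities for $x_k$ and $y_k^*$ hold for every stepsize. Take stepsize $\beta/\rho_k$, set $z_k:=(x_k,y_k^*)$ and $d_k:=(\nabla_x p(z_k),-\nabla_y p(z_k))$, and absorb the $f$-, $\sigma_k$- and $u_k$-terms by nonexpansiveness of $\mathcal P_{X\times Y}$. This gives directly
\[
\bigl\|z_k-\mathcal P_{X\times Y}(z_k+\beta d_k)\bigr\|\le\frac{\beta}{\rho_k}\bigl(\|u_k\|+C\bigr),
\]
which is exactly the paper's estimate \eqref{multiplierbound:eq}. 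So the monotonicity-in-stepsize inequality you planned to try is unnecessary.
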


Adding a penalty-scale decay requirement to the residual sequence in Definition~\ref{def:type1-ekkt} yields a stronger stationarity notion.
\begin{definition}\label{def:type2-ekkt}
A Type-I enhanced KKT point $(\bar x,\bar y)$ is called a \emph{Type-II enhanced KKT point} if the sequence in Definition~\ref{def:type1-ekkt} can be chosen so that $\rho_k\|u_k\|\to 0$.
\end{definition}

By definition, every Type-II enhanced KKT point is a Type-I enhanced KKT point, while the converse need not hold. The stronger Type-II condition is necessary for strict local minimizers of the MCC problem~\eqref{MCC}.

\begin{theorem}\label{thm:strict-to-type2}
Assume that $\varphi$ is lower semicontinuous on $X$. Let $x^*$ be a strict local minimizer of~\eqref{MCC}, namely, there exists $\delta>0$ such that
$\varphi(x^*)<\varphi(x),
\;\forall x\in X\cap B_{\delta}(x^*)\setminus\{x^*\}.$
Assume that GP\L{}CQ holds at every $(x^*,\hat{y})$ with
$\hat{y}\in\arg\max_{y\in Y}\{f(x^*,y)\mid c(x^*,y)\le 0\}.$
Then there exists a point
$y^*\in\arg\max_{y\in Y}\{f(x^*,y)\mid c(x^*,y)\le 0\}$
such that $(x^*,y^*)$ is a Type-II enhanced KKT point of~\eqref{MCC}.
\end{theorem}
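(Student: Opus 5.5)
We follow the standard penalty-based route to enhanced Fritz John conditions, adapted to the smooth approximations. Because $x^*$ is a strict local minimizer, no proximal term is needed to localize. We take $\rho_k\to\infty$ and $\sigma_k\to0$, with $\sigma_k$ decaying fast relative to $\rho_k$; the precise rate is fixed below. Let $x_k$ be a global minimizer of $\varphi_{\rho_k,\sigma_k}$ over the compact set $X\cap B_{\delta}(x^*)$ (shrinking $\delta$ if necessary). Existence follows from continuity of $\varphi_{\rho_k,\sigma_k}$ (Proposition~\ref{differentiable}).

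\textbf{Step 1: $x_k\to x^*$.} Restricted to the ball, $\varphi_{\rho_k,\sigma_k}$ epi-converges to $\varphi$ by Lemmas~\ref{limsup} and \ref{liminf}, which use the lower semicontinuity of $\varphi$. Lemma~\ref{lem:epi-minimizers} then shows that every accumulation point of $\{x_k\}$ minimizes $\varphi$ on $X\cap B_\delta(x^*)$, so by strictness it equals $x^*$. Hence $x_k\to x^*$, and $x_k$ lies eventually in the interior of the ball. Thus $x_k$ is an exact local minimizer of $\varphi_{\rho_k,\sigma_k}$ on $X$, and $0\in\nabla\varphi_{\rho_k,\sigma_k}(x_k)+\mathcal N_X(x_k)$ holds with $u_k=0$. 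This trivially gives $\rho_k\|u_k\|\to0$. This is the key advantage over Theorem~\ref{thm:local-to-type1}, where a proximal perturbation produced only $u_k\to0$.

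\textbf{Step 2: KKT limit and multipliers.} Set $y_k^*=y^*_{\rho_k,\sigma_k}(x_k)$. By compactness of $Y$, pass to a subsequence with $y_k^*\to y^*$; Proposition~\ref{ystarconvergence} gives $y^*\in\arg\max_{y\in\Gamma(x^*)}f(x^*,y)$. Define $\lambda_k=\rho_k[c(x_k,y_k^*)]_+$. Using \eqref{differentiability} and the optimality of $y_k^*$, the relations $0\in-\nabla_x\ML(x_k,y_k^*,\lambda_k)+\mathcal N_X(x_k)$ (with the sign convention of \eqref{KKT}) and $0\in-\nabla_y\ML(x_k,y_k^*,\lambda_k)+\sigma_k y_k^*+\mathcal N_Y(y_k^*)$ hold. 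The main obstacle is showing that $\{\lambda_k\}$ is bounded, and for this we invoke GP\L{}CQ at $(x^*,y^*)$. Write $p=\tfrac12\|[c]_+\|^2$, so that $\nabla p=\sum_i[c_i]_+\nabla c_i$. Divide both inclusions by $\rho_k$ and convert them into the projection form: the point $(x_k,y_k^*)$ is a fixed point of $\mathcal P_{X\times Y}$ applied to a step along $(\nabla_x p,-\nabla_y p)$ plus $O(1/\rho_k)\cdot(\nabla f,\ \sigma_k y)$ terms. By nonexpansiveness of the projection, the residual in \eqref{GPL-eq} is $O(1/\rho_k)$. GP\L{}CQ then gives $\sqrt{p(x_k,y_k^*)}\le C/\rho_k$, so $\|\lambda_k\|=\rho_k\|[c]_+\|=\rho_k\sqrt{2p}\le C'$. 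Passing to a further subsequence, $\lambda_k\to\bar\lambda\ge0$. Continuity then yields the KKT system \eqref{KKT} at $(x^*,y^*,\bar\lambda)$: feasibility comes from $p\to0$, and complementarity holds because $\lambda_{k,i}>0$ only where $c_i(x_k,y_k^*)>0$, so $\bar\lambda_i>0$ forces $c_i(x^*,y^*)=0$ in the limit.

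\textbf{Step 3: strict violation.} If $\bar\lambda_i>0$, then $\lambda_{k,i}>0$ for all large $k$, which means $c_i(x_k,y_k^*)>0$. Discarding finitely many indices gives the sign condition for all $k$. Combining Steps 1--3 with $u_k\equiv0$ shows that $(x^*,y^*)$ is Type-II enhanced KKT.

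The delicate point is Step 2. The GP\L{}CQ residual uses step $\beta$, whereas the stationarity relations come with the scaling $1/\rho_k$. We therefore rescale via the monotonicity of $t\mapsto\|z-\mathcal P(z+tv)\|/t$ to compare them. We must also ensure that the $\sigma_k y_k^*/\rho_k$ and $\nabla f/\rho_k$ perturbations enter only at order $1/\rho_k$, which requires the bounded gradients on the compact $X\cap B_\delta\times Y$.
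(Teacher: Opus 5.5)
Your proposal is correct and follows the paper's proof: you minimize $\varphi_{\rho_k,\sigma_k}$ over $X\cap B_r(x^*)$, use epi-convergence plus strictness to get $x_k\to x^*$ and hence exact stationarity with $u_k\equiv0$, and extract $y_k^*\to y^*\in\arg\max_{y\in\Gamma(x^*)}f(x^*,y)$ via Proposition~\ref{ystarconvergence}; where the paper then simply invokes Theorem~\ref{thm:approx-to-type1} with $u_k\equiv0$, your Steps 2--3 re-derive that theorem's proof inline. Minor points: the $x$-inclusion should read $0\in\nabla_x\ML(x_k,y_k^*,\lambda_k)+\mathcal N_X(x_k)$ (your projection form along $(\nabla_x p,-\nabla_y p)$ already uses this correct sign), and neither a decay rate for $\sigma_k$ nor the monotonicity rescaling is needed, since an exact normal-cone inclusion gives a projection fixed point at every stepsize, in particular at $\beta/\rho_k$.
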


\begin{remark}
In general, the Type-II enhanced KKT condition is not necessary for all local minimizers. 
When the value function is locally flat, exact stationary points of the penalty-based approximations may converge only to a subset of stationary points of the value function. 
A simple example illustrating this phenomenon is
\[
\begin{aligned}
    \min_{x\in X}\max_{\substack{y\in Y\\ y\le 0}}
   -(y-1)^2+xy,
\end{aligned}
\]
where $X=[-1,1]$ and $Y=[-2,2]$. In this case, the value function is constant on $[-1,1]$. 
One can verify that $(0,0)$ is a local minimizer and a Type-I enhanced KKT point, but not a Type-II enhanced KKT point.
\end{remark}

Example~\ref{exm} shows that Type-I enhanced KKT may still include a spurious KKT point, whereas Type-II enhanced KKT rules it out through the stronger penalty-scale residual decay. The precise statement is as follows.
\begin{proposition}\label{prop:spurious-kkt}
For the MCC \eqref{example} in Example~\ref{exm}, the point $(\mathbf{0},\mathbf{0})$ has the following properties:
\begin{enumerate}
    \item $(\mathbf{0}, \mathbf{0})$ is a standard KKT point;
    \item $x=\mathbf{0}$ is not a local minimizer of the value function $\varphi$;
    \item $(\mathbf{0}, \mathbf{0})$ is a Type-I enhanced KKT point but not a Type-II enhanced KKT point.
\end{enumerate}
Consequently, Type-II enhanced KKT excludes this spurious KKT point.
\end{proposition}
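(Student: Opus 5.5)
The plan is to verify the three items by explicit computation, exploiting the fact that the inner problem is a strongly concave quadratic in $y$ with a single linear (in $y$) constraint $c(x,y)=\mathbf e^\top y-\|x\|^2$. Write $s:=\mathbf e^\top x$ throughout.

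\textbf{Item 1 (standard KKT).} Take $\lambda=1$ and check \eqref{KKT} at $(\mathbf 0,\mathbf 0,1)$.
\begin{itemize}
\item $\nabla_y\ML=-(y-\mathbf e)+x/2-\lambda\mathbf e$ vanishes there, and $\mathbf 0\in\operatorname{int}Y$.
\item $\nabla_x\ML=(\|x\|^2-2)x+4x-2s\mathbf e+y/2+2\lambda x$ vanishes at $x=y=\mathbf 0$, and $\mathbf 0\in\operatorname{int}X$.
\item $c(\mathbf 0,\mathbf 0)=0$, so feasibility and complementarity hold.
\end{itemize}

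\textbf{Item 2 (not a local minimizer).} Complete the square: $-\tfrac12\|y-\mathbf e\|^2+\tfrac12x^\top y=-\tfrac12\|y-\mathbf e-x/2\|^2+s/2+\|x\|^2/8$.
\begin{itemize}
\item The unconstrained maximizer $\mathbf e+x/2$ violates the constraint near $x=\mathbf 0$, since $2+s/2>\|x\|^2$.
\item Hence the inner maximizer is the projection $y=\mathbf e+x/2-t\mathbf e$ with $t=1+s/4-\|x\|^2/2$. It lies in $Y$ for small $x$.
\item Restricting to $x=a\mathbf e$ and using a difference of squares, a short computation gives $\varphi(a\mathbf e)=a^3$.
\end{itemize}
Since $\varphi(\mathbf 0)=0$ and $\varphi(a\mathbf e)<0$ for small $a<0$, the point $\mathbf 0$ is not a local minimizer. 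I will also remark that the quadratic part of $\varphi$ degenerates along $\mathbf e$, which is why the cubic term decides the matter.

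\textbf{Item 3, Type-I.} Take $x_k=\mathbf 0$ and any $\rho_k\to\infty$, $\sigma_k\to0$.
\begin{itemize}
\item By symmetry the inner maximizer is $y_k^*=\theta_k\mathbf e$ with $\theta_k=1/(1+2\rho_k+\sigma_k)>0$. Thus $c(\mathbf 0,y_k^*)=2\theta_k>0$, which is strict violation, and $y_k^*\to\mathbf 0$.
\item By Proposition~\ref{differentiable}, $u_k:=\nabla\varphi_{\rho_k,\sigma_k}(\mathbf 0)=\theta_k\mathbf e/2\to0$. The penalty term contributes nothing because $\nabla_xc=-2x=0$.
\item $\rho_k c(\mathbf 0,y_k^*)=2\rho_k\theta_k\to1=\bar\lambda$, consistent with item 1, so $I^+(\bar\lambda)=\{1\}$ is the strictly violated index.
\end{itemize}

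\textbf{Item 3, not Type-II.} This is the main obstacle, because it must hold for every admissible sequence. Suppose $x_k\to\mathbf 0$ and $y_k^*\to\mathbf 0$ with $c_k:=c(x_k,y_k^*)>0$. Eventually $x_k\in\operatorname{int}X$, so $u_k=\nabla\varphi_k(x_k)$.
\begin{itemize}
\item Set $\mu_k:=\rho_kc_k$. The $y$-optimality condition gives $(1+\sigma_k)y_k^*=\mathbf e+x_k/2-\mu_k\mathbf e$, and $y_k^*\to\mathbf 0$ forces $\mu_k\to1$.
\item Taking the $\mathbf e$-component yields the exact identity $2\mu_k-2=s_k/2-(1+\sigma_k)(\mu_k/\rho_k+\|x_k\|^2)$.
\item The gradient is $\nabla\varphi_k(x_k)=(\|x_k\|^2+2+2\mu_k)x_k-2s_k\mathbf e+y_k^*/2$.
\end{itemize}
Split $x_k=\tfrac{s_k}{2}\mathbf e+w_k$ with $w_k\perp\mathbf e$.
\begin{itemize}
\item The component of the gradient orthogonal to $\mathbf e$ is $\approx(4+\tfrac12)w_k$. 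So $\rho_k\|u_k\|\to0$ would force $\rho_k\|w_k\|\to0$.
\item The $\mathbf e$-component is $\mathbf e^\top u_k=(\|x_k\|^2+2\mu_k-2)s_k+\mathbf e^\top y_k^*/2$, with $\mathbf e^\top y_k^*=\mu_k/\rho_k+\|x_k\|^2$.
\item Substituting the identity for $2\mu_k-2$ gives $\mathbf e^\top u_k=\tfrac{s_k^2}{2}+\tfrac{\mu_k}{2\rho_k}+\tfrac{\|x_k\|^2}{2}+O\big(|s_k|(\|x_k\|^2+\sigma_k\|x_k\|^2+\mu_k/\rho_k)\big)$.
\end{itemize}
Because $s_k\to0$, the error terms are dominated by the nonnegative leading terms. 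Hence $\rho_k\,\mathbf e^\top u_k\ge\mu_k/4\to1/4$ eventually, so $\rho_k\|u_k\|\not\to0$. The point requiring the most care is bounding the $O(\cdot)$ remainders uniformly: terms like $\rho_k|s_k|^3$ must be absorbed by $\rho_k s_k^2/2$, not compared against $1/\rho_k$. I will write the remainder explicitly rather than asymptotically to make this absorption transparent.
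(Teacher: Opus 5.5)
Your proposal is correct and follows essentially the same route as the paper. The steps match: the KKT system is checked directly at $(\mathbf 0,\mathbf 0,1)$, the computation $\varphi(a\mathbf e)=a^3$ settles item 2, and $x_k\equiv\mathbf 0$ with $y_k^*=\mathbf e/(1+\sigma_k+2\rho_k)$ gives Type-I. For the failure of Type-II you use the inner first-order condition to show that $\rho_k\,\mathbf e^\top u_k$ stays bounded away from zero; the paper eliminates $c(x_k,y_k^*)$ to obtain an exact rational formula, while you keep $\mu_k=\rho_k c(x_k,y_k^*)$ and absorb the remainders, which is equivalent.

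The decomposition $x_k=\tfrac{s_k}{2}\mathbf e+w_k$ is never used in your final argument and can be dropped. If you keep it, note that its coefficient should be $4+\tfrac14$, not $4+\tfrac12$.
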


\begin{remark}
The augmented Lagrangian method is often viewed as a hybrid of penalty and
multiplier methods. A standard augmented-Lagrangian reformulation of
\eqref{MCC}, as considered in~\cite{lu2023first}, is
\[
\min_{x\in X}\min_{\lambda \ge 0}\max_{y\in Y}\;
f(x,y) - \frac{1}{2\rho}
\left(\|[\lambda+\rho c(x,y)]_+\|^2-\|\lambda\|^2\right).
\]
However, for any $\rho>0$, one can directly verify that the triple
$(x,y,\lambda)=(\mathbf 0,\mathbf 0,1)$
is stationary for the augmented-Lagrangian minimax system.
Thus, at the stationarity level, the augmented-Lagrangian reformulation does not exclude this spurious point.
\end{remark}

Finally, the following result shows that the enhanced KKT notions are intrinsic to the penalty-based approximation scheme: they arise as limiting stationarity conditions of the smooth approximation problems.

\begin{theorem}\label{thm:approx-to-type1}
Let $\rho_k\to\infty$ and $\sigma_k\to 0$, and let $\{x_k\}\subset X$ satisfy
\[
u_k\in \nabla \varphi_{\rho_k,\sigma_k}(x_k)+\mathcal{N}_X(x_k),
\qquad
u_k\to 0.
\]
Assume that $x_k\to \bar x$, and let $\bar y$ be an accumulation point of
$\{y^*_{\rho_k,\sigma_k}(x_k)\}$. If GP\L{}CQ holds at $(\bar x,\bar y)$, then
$(\bar x,\bar y)$ is a Type-I enhanced KKT point of~\eqref{MCC}. Moreover, if the residuals can be chosen so that
$\rho_k\|u_k\|\to 0,$
then $(\bar x,\bar y)$ is a Type-II enhanced KKT point of~\eqref{MCC}.
\end{theorem}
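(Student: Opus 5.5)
Passing to a subsequence, we may assume $y_k^*:=y^*_{\rho_k,\sigma_k}(x_k)\to\bar y$. The plan is to build the multipliers $\bar\lambda$ as limits of the penalty multipliers
\[
\lambda_k:=\rho_k[c(x_k,y_k^*)]_+\in\IR^p_+ .
\]
Then we verify the KKT system~\eqref{KKT}. Finally we pass to a further subsequence on which the strict-violation condition holds.

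\textbf{Stationarity in penalty form.} By Proposition~\ref{differentiable},
\[
\nabla\varphi_{\rho_k,\sigma_k}(x_k)=\nabla_x f(x_k,y_k^*)-\nabla_x c(x_k,y_k^*)^\top\lambda_k .
\]
Hence $u_k-\nabla_x\ML(x_k,y_k^*,\lambda_k)\in\mathcal N_X(x_k)$. Optimality of $y_k^*$ for the concave problem $\max_{y\in Y}\LL_{\rho_k,\sigma_k}(x_k,\cdot)$ gives
\[
0\in-\nabla_y\ML(x_k,y_k^*,\lambda_k)+\sigma_k y_k^*+\mathcal N_Y(y_k^*),
\]
and $\sigma_k y_k^*\to0$ because $Y$ is compact. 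Once we know $\{\lambda_k\}$ is bounded, any limit $\bar\lambda$ of a subsequence satisfies both inclusions of~\eqref{KKT}, by outer semicontinuity of normal cones of convex sets and continuity of the gradients.

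\textbf{Feasibility and complementarity.} These follow from the bound $\rho_k\|[c(x_k,y_k^*)]_+\|=\|\lambda_k\|\le C$, which gives $[c(\bar x,\bar y)]_+=0$. If $c_i(\bar x,\bar y)<0$, then $c_i(x_k,y_k^*)<0$ for large $k$, so $\lambda_{k,i}=0$ and therefore $\bar\lambda_i=0$.

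\textbf{Boundedness of $\lambda_k$ (main obstacle).} This is where GP\L{}CQ enters; note that $\|\lambda_k\|=\rho_k\sqrt{2p(x_k,y_k^*)}$. Rewrite the two inclusions as projection fixed points with step $\beta$:
\[
x_k=\P_X\bigl(x_k-\beta(\nabla_x f-\rho_k\nabla_x p-u_k)\bigr),\qquad
y_k^*=\P_Y\bigl(y_k^*+\beta(\nabla_y f-\rho_k\nabla_y p-\sigma_k y_k^*)\bigr).
\]
Here we used $\nabla p=\nabla c^\top[c]_+$, so $\rho_k\nabla p=\nabla c^\top\lambda_k$.

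The GP\L{}CQ residual uses the sign pattern $(\nabla_x p,-\nabla_y p)$, exactly matching the min--max dynamics. I would therefore compare the GP\L{}CQ residual at step $\beta$ with the residual of these fixed points after dividing by $\rho_k$. Concretely, for $z=x$ we have $x_k=\P_X\bigl(x_k+\beta\rho_k\nabla_x p-\beta(\nabla_x f-u_k)\bigr)$, and similarly for $y$. Here one wants to exploit that the projection residual $\|z-\P(z+t d)\|/t$ is nonincreasing in $t$. Combined with nonexpansiveness, this bounds the GP\L{}CQ residual at step $\beta$ by something of order $\|(\nabla_x f-u_k,\ \nabla_y f-\sigma_ky_k^*)\|/\rho_k\le M/\rho_k$.

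Making this comparison rigorous is the delicate part: the monotonicity-in-stepsize argument must be applied with effective step $\beta\rho_k$ against $\beta$, which needs $\rho_k\ge1$. If this fails, I would instead use a contradiction argument: normalize $\lambda_k/\|\lambda_k\|$ and show the limit contradicts GP\L{}CQ. Granting the comparison, GP\L{}CQ gives $\sqrt{p(x_k,y_k^*)}\le\gamma M'/\rho_k$ for large $k$, hence $\|\lambda_k\|\le\sqrt2\gamma M'$.

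\textbf{Enhanced conditions.} For $i\in I^+(\bar\lambda)$ we have $\lambda_{k,i}\to\bar\lambda_i>0$, so $\lambda_{k,i}>0$, i.e. $c_i(x_k,y_k^*)>0$, for all large $k$. Dropping finitely many terms gives the strict violation for all $k$, so the subsequence itself witnesses Definition~\ref{def:type1-ekkt}. Since $\rho_k\|u_k\|\to0$ is preserved under subsequences, the same sequence witnesses Definition~\ref{def:type2-ekkt} in the second case.
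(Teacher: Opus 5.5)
Your overall structure matches the paper's proof, and those parts are fine. This covers the penalty multipliers $\lambda_k=\rho_k[c(x_k,y_k^*)]_+$, passing to the limit in the two normal-cone inclusions, feasibility and complementarity, strict violation on $I^+(\bar\lambda)$, and preservation of $\rho_k\|u_k\|\to0$ under subsequences. The genuine gap is the step you flag yourself: boundedness of $\{\lambda_k\}$.

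\textbf{The stepsize comparison runs the wrong way.} Write $z_k=(x_k,y_k^*)$, $d_k=(\nabla_x p,-\nabla_y p)$, and $g_k=(u_k-\nabla_x f,\ \nabla_y f-\sigma_k y_k^*)$, all evaluated at $(x_k,y_k^*)$. Set $R(t):=\|z_k-\mathcal{P}_{X\times Y}(z_k+t d_k)\|$. Your fixed points with step $\beta$ read $z_k=\mathcal{P}_{X\times Y}(z_k+\beta\rho_k d_k+\beta g_k)$, so nonexpansiveness gives $R(\beta\rho_k)\le\beta\|g_k\|$. Because $t\mapsto R(t)/t$ is nonincreasing, for $\beta\le\beta\rho_k$ you only get $R(\beta)/\beta\ge R(\beta\rho_k)/(\beta\rho_k)$, which is a lower bound on $R(\beta)$. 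The only upper bound available from stepsize monotonicity is $R(\beta)\le R(\beta\rho_k)\le\beta\|g_k\|$, using that $R$ is nondecreasing. This loses the factor $1/\rho_k$, and through GP\L{}CQ it yields only $\|\lambda_k\|\le\sqrt2\gamma\rho_k\|g_k\|$, which is not a bound.

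\textbf{The fallback also fails as stated.} A normalized limit $\mu$ of $\lambda_k/\|\lambda_k\|$ satisfying $\nabla_x c(\bar x,\bar y)^\top\mu\in\mathcal N_X(\bar x)$ and $-\nabla_y c(\bar x,\bar y)^\top\mu\in\mathcal N_Y(\bar y)$ is not excluded by GP\L{}CQ. GP\L{}CQ is an error bound, and error bounds tolerate abnormal multipliers. For example, take $c(x,y)=y$ and $Y=[0,1]$. Then GP\L{}CQ holds at $(\bar x,0)$ for $\beta\le1$ with $\gamma=1/\sqrt2$, yet $\mu=1$ is such a multiplier. A contradiction would have to be produced along the sequence, and that requires exactly the missing estimate.

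\textbf{The missing idea is one line.} The two relations come from the inclusion $\rho_k d_k+g_k\in\mathcal N_{X\times Y}(z_k)$, and normal cones are cones. So the projection fixed-point relation holds for every positive step. Use step $\beta/\rho_k$ instead of $\beta$:
\[
z_k=\mathcal{P}_{X\times Y}\bigl(z_k+\beta d_k+(\beta/\rho_k)g_k\bigr).
\]
Nonexpansiveness then gives $R(\beta)\le(\beta/\rho_k)\|g_k\|$ directly. GP\L{}CQ yields $\sqrt{p(x_k,y_k^*)}\le\gamma\|g_k\|/\rho_k$, hence $\|\lambda_k\|=\sqrt2\rho_k\sqrt{p(x_k,y_k^*)}\le\sqrt2\gamma\|g_k\|$, which is bounded since $u_k\to0$, $x_k\to\bar x$ and $Y$ is compact. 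This is precisely the paper's choice $\alpha_k:=\beta/\rho_k$. No stepsize-monotonicity argument and no condition $\rho_k\ge1$ is needed.
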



\section{Single-Loop Stochastic Gradient Algorithm}\label{convergence analysis}

In the previous section, we introduced a sequence of smooth problems $\min_{x\in X} \; \varphi_{\rho_k,\sigma_k}(x)$ to approximate the MCC. The smoothness of $\varphi_{\rho_k,\sigma_k}(x)$ facilitates the application of gradient-based optimization methods. Building on this, we now develop a practical single-loop stochastic gradient algorithm, named SPACO, to solve the stochastic MCC~\eqref{MCC}.
\subsection{Algorithm Description}

We now describe the proposed Stochastic Penalty-based Algorithm for minimax
optimization with COupled constraints (SPACO). For brevity, write
\(\LL_k=\LL_{\rho_k,\sigma_k}\) and
\(\varphi_k=\varphi_{\rho_k,\sigma_k}\).

\textbf{Motivation.}
For the smooth approximation problem \(\min_{x\in X}\varphi_k(x)\), the ideal
projected-gradient step is
\begin{equation*}
    x^{k+1} = \P_X\!\left(x^k - \alpha_k \nabla \varphi_k(x^k)\right).
\end{equation*}
By Proposition~\ref{differentiable},
\[
\nabla \varphi_k(x^k)=\nabla_x\LL_k(x^k,y_k^*(x^k)),
\qquad
y_k^*(x):=y^*_{\rho_k,\sigma_k}(x).
\]
Thus, an exact implementation would require solving the inner maximization
problem at every iteration. SPACO avoids this nested-loop computation by
tracking \(y_k^*(x^k)\) with one projected ascent step and then using the
tracked point to form an inexact stochastic descent direction for \(x\).

\textbf{Inner tracking step.}
Given \((x^k,y^k)\), SPACO first performs one stochastic projected-ascent step
for the regularized penalized inner problem. Draw \(\xi_k^y\sim D\) and set
\begin{equation}\label{direction_y}
    d_y^k = \nabla_y \Psi_k(x^k, y^k; \xi_k^y),
\end{equation}
where 
$\Psi_k(x, y; \xi) := F(x, y; \xi) - \frac{\rho_k}{2}\big\|[c(x,y)]_+\big\|^2 - \frac{\sigma_k}{2}\|y\|^2$
is the stochastic counterpart of \(\LL_k(x,y)\). The inner variable is updated by
\begin{equation*}
    y^{k+1} = \P_Y\!\left(y^k + \beta_k d_y^k\right),
\end{equation*}
where \(\beta_k>0\) is the inner stepsize.

\textbf{Outer inexact projected-gradient step.}
The point \(y^{k+1}\) is then used in place of the exact maximizer
\(y_k^*(x^k)\). To control the stochastic error in the resulting outer
direction, SPACO employs a momentum-based variance-reduced estimator
\cite{cutkosky2019momentum}. The estimator is initialized and updated as
\begin{equation}\label{direction_x_iterative average} 
\begin{aligned}
        d_x^0 &= \nabla_x \Psi_0(x^0, y^1; \xi_0^x),\\
        d_x^k &= (1 - \eta_k) \left( d_x^{k-1} - \nabla_x \Psi_{k-1}(x^{k-1}, y^k; \xi_k^x) \right) + \nabla_x \Psi_k(x^k, y^{k+1}; \xi_k^x), \qquad k \geq 1,
\end{aligned}
\end{equation} 
where \(\eta_k\in(0,1]\) is the momentum parameter. For \(k\ge1\), the two
\(x\)-gradient queries in \eqref{direction_x_iterative average} use the same
sample \(\xi_k^x\), drawn independently of \(\xi_k^y\). The outer variable is
updated by
\begin{equation*}
    x^{k+1} = \P_X\!\left(x^k - \alpha_k d_x^k\right),
\end{equation*}
where \(\alpha_k>0\) is the outer stepsize.

\textbf{Parameter strategy and theoretical role.}
The penalty approximation requires \(\rho_k\to\infty\) and \(\sigma_k\to0\).
Consequently, the smooth approximation, its curvature, and the inner
maximizer \(y_k^*(x)\) all vary with \(k\). The stepsizes
\(\alpha_k,\beta_k\) and momentum parameters \(\eta_k\) are therefore chosen
jointly with \(\rho_k,\sigma_k\). The subsequent analysis controls the
projected-gradient residual for \(\varphi_k\), the tracking error
\(\|y^k-y_k^*(x^k)\|\), and the variance of the estimator \(d_x^k\).
The complete single-loop implementation is summarized in
Algorithm~\ref{algorithm:spacom}.

\begin{algorithm}[htbp]
  \DontPrintSemicolon
  \caption{\textbf{S}tochastic \textbf{P}enalty-based \textbf{A}lgorithm for minimax optimization with \textbf{CO}upled constraints (SPACO)}
  \label{algorithm:spacom}
  \KwIn{Initial points $(x^0, y^0) \in X \times Y$, penalty parameters $\{\rho_k\}$, regularization parameters $\{\sigma_k\}$, stepsizes $\alpha_k,\beta_k > 0$, and momentum parameters $\{\eta_k\}$.}
  \For{$k = 0,1,\dots$}{
    Sample $\xi_k^y \sim D$, compute $d_y^k$ by \eqref{direction_y}, and update
    \[
    y^{k+1} = \P_{Y}(y^k + \beta_k d_y^k).
    \]
    \;\vspace{-\baselineskip}
    Sample $\xi_k^x \sim D$ independently of $\xi_k^y$.\;
    For $k\ge1$, use the same $\xi_k^x$ in both $x$-gradient queries in \eqref{direction_x_iterative average}.\;
    Compute $d_x^k$ by \eqref{direction_x_iterative average} and update
    \[x^{k+1} = \P_{X}(x^k - \alpha_k d_x^k).\]
  }
\end{algorithm}

\subsection{Assumptions and Notation for the Stochastic Analysis}
We collect the additional assumptions and constants used in the convergence
analysis of SPACO. Besides the standing assumptions, we require compactness of $X$.

\begin{assumption}\label{assum:Xcompact}
    The set $X$ is compact.
\end{assumption}

Let \(\F_k\) denote the \(\sigma\)-algebra generated by all samples observed
before iteration \(k\), and let \(\F_{k+\frac{1}{2}}\) further include the
current sample \(\xi_k^y\):
\[
\F_k = \sigma \{ \xi_0^y, \xi_0^x, \ldots, \xi_{k-1}^y, \xi_{k-1}^x \},
\qquad
\F_{k+\frac{1}{2}} = \sigma \{ \F_k, \xi_{k}^y \}.
\]
We impose the following standard stochastic-oracle conditions.
\begin{assumption}\label{assumgradient}
For all \((x,y)\in X\times Y\), the stochastic gradients
\(\nabla_x F(x,y;\xi)\) and  \allowbreak\
\(\nabla_y F(x,y;\xi)\) satisfy:
\begin{enumerate}
\item \emph{Unbiasedness and independence.}
The sample \(\xi_k^y\) is independent of \(\F_k\), the sample \(\xi_k^x\)
is independent of \(\F_{k+\frac{1}{2}}\), and
\begin{align*}
        &\mathbb{E}_{\xi \sim D} \left[\nabla_x F(x,y;\xi) \right] = \nabla_x f(x,y),\;
        \mathbb{E}_{\xi \sim D} \left[\nabla_y F(x,y;\xi) \right] = \nabla_y f(x,y).
\end{align*}
\item \emph{Bounded variance.}
There exists \(\delta>0\) such that
\begin{align*}
        &\mathbb{E}_{\xi \sim D}\left[\|\nabla_x F(x,y;\xi) -\nabla_x f(x,y)\|^2 \right] \le \delta^2, \;\mathbb{E}_{\xi \sim D}\left[\|\nabla_y F(x,y;\xi) -\nabla_y f(x,y)\|^2 \right] \le \delta^2.
\end{align*}
\end{enumerate}
\end{assumption}

\begin{assumption}\label{stochasticsmooth}
The stochastic oracle \(\nabla_x F(x,y;\xi)\) admits simultaneous queries:
using the same sample \(\xi\), the algorithm can evaluate stochastic
\(x\)-gradients at two distinct points \((x_1,y_1)\) and \((x_2,y_2)\).
Moreover, for some constant \(L_F>0\),
\begin{align*}
    &\E_{\xi}\!\left[\|\nabla_x F(x_1,y_1;\xi)-\nabla_x F(x_2,y_2;\xi)\|^2\right]
    \le {L}_F^2\!\left(\|x_1-x_2\|^2+\|y_1-y_2\|^2\right).
\end{align*}
\end{assumption}

Let \(L_f\) and \(L_c\) denote Lipschitz constants of \(\nabla f\) and
\(\nabla c\) on \(X\times Y\), respectively. By enlarging \(L_f\) if
necessary, we assume \(L_f\ge L_F\); hence \(L_f\) serves as a common bound for
the deterministic smoothness and the stochastic mean-squared smoothness used
below. Since \(X\) and \(Y\) are compact, define
\begin{align*}
    M := \max \left\{
    \sup_{x \in X, y \in Y} \|\nabla f(x, y)\|,
    \sup_{x \in X, y \in Y} \|\nabla c(x, y)\|,
    \sup_{x \in X, y \in Y} |f(x, y)|,
    \sup_{x \in X, y \in Y} \|c(x, y)\|
    \right\},
\end{align*}
and set \(D_x:=\sup_{x \in X}\|x\|\) and
\(D_y:=\sup_{y\in Y}\|y\|\).

\subsection{Auxiliary Lemmas}
We next provide the estimates used to prove convergence of SPACO. These
lemmas control, respectively, the smoothness of the penalized objective, the
tracking error of the inner variable, the descent of the approximate value
function, and the variance of the recursive outer-gradient estimator.

First, we establish the smoothness of the regularized penalized objective and a
uniform lower bound for the approximate value function.
\begin{lemma}\label{gradientLipschitz}
Let $\rho_k\to\infty$ and $\sigma_k\to 0$ as $k\to\infty$. Then
$\nabla \LL_k(x,y)$ is $L_k$-Lipschitz continuous on $X \times Y$ with
constant
\[
L_k:= L_f+\rho_k pML_c+\rho_k pM^2+\sigma_k .
\]
Moreover, there exists a constant \(\underline{\varphi}\) such that
\(\varphi_k(x)\ge \underline{\varphi}\) for all \(x\in X\) and all \(k\).
\end{lemma}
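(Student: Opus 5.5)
The plan is to split $\nabla\LL_k$ into its three pieces and bound the Lipschitz constant of each separately. The pieces are $\nabla f$, $\nabla\bigl(\tfrac{\rho_k}{2}\|[c]_+\|^2\bigr)$ and $\sigma_k y$. The first is $L_f$-Lipschitz by definition, and the last is trivially $\sigma_k$-Lipschitz. The only real work is the penalty term, whose gradient is
\[
\rho_k\sum_{i=1}^p [c_i(x,y)]_+\nabla c_i(x,y).
\]

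\textbf{Penalty term.} Write $z=(x,y)$ and $z'=(x',y')$. For each $i$, use the product splitting
\[
[c_i(z)]_+\nabla c_i(z)-[c_i(z')]_+\nabla c_i(z')
=[c_i(z)]_+\bigl(\nabla c_i(z)-\nabla c_i(z')\bigr)+\bigl([c_i(z)]_+-[c_i(z')]_+\bigr)\nabla c_i(z').
\]
The first summand is at most $M L_c\|z-z'\|$, using $|[c_i]_+|\le\|c\|\le M$ and the Lipschitz continuity of $\nabla c$ with constant $L_c$. For the second summand, $t\mapsto[t]_+$ is $1$-Lipschitz. Because $X\times Y$ is convex and $\|\nabla c_i\|\le M$, the mean value inequality gives $|c_i(z)-c_i(z')|\le M\|z-z'\|$. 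Hence the second summand is at most $M^2\|z-z'\|$.

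Summing over $i\in[p]$ and multiplying by $\rho_k$ gives the constant $\rho_k p(ML_c+M^2)$. Adding the three constants yields $L_k$ as stated. Nothing here needs $\rho_k\to\infty$ or $\sigma_k\to0$; those hypotheses matter only for the second claim.

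\textbf{Lower bound.} Fix $x\in X$ and pick any $\hat y\in\Gamma(x)$, which is nonempty by Assumption~\ref{assum1}. Since $c(x,\hat y)\le0$, the penalty term vanishes at $\hat y$, so
\[
\varphi_k(x)\ \ge\ \LL_k(x,\hat y)\ =\ f(x,\hat y)-\tfrac{\sigma_k}{2}\|\hat y\|^2\ \ge\ -M-\tfrac{\bar\sigma}{2}D_y^2.
\]
Here $\bar\sigma:=\sup_k\sigma_k$, which is finite because $\sigma_k\to0$. Therefore $\underline{\varphi}:=-M-\bar\sigma D_y^2/2$ works uniformly in $x$ and $k$.

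\textbf{Main obstacle.} The only delicate point is the nonsmoothness of $[\cdot]_+$ in the penalty gradient. The product splitting above reduces it to the $1$-Lipschitz property of $[\cdot]_+$, with the bounds $M$ supplied by the compactness of $X$ and $Y$ (Assumption~\ref{assum:Xcompact}). One should also check that the bound on $\|\nabla c\|$ controls each $\|\nabla c_i\|$ in whatever norm convention is used for the Jacobian; it does, since each row norm is at most the operator norm.
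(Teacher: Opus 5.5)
Your proposal is correct and follows the paper's proof essentially step for step. The Lipschitz part matches: the same product splitting of $[c_i]_+\nabla c_i$, the $1$-Lipschitz property of $[\cdot]_+$, the bounds by $M$ and $L_c$, the mean value estimate $|c_i(z)-c_i(z')|\le M\|z-z'\|$, and summation over $i$. The lower bound also matches, evaluating at a feasible $\hat y\in\Gamma(x)$ so that the penalty vanishes; you bound $f$ below by $-M$ where the paper uses $\inf_{X\times Y} f$, which makes no difference.
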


The next lemma gives the one-step contraction of the inner tracking error.

\begin{lemma}\label{lineardescent}
Suppose that the step-size sequence $\{\beta_k\}$ satisfies
$0<\beta_k\le \frac{1}{L_k}$ for each $k$.
Let $\{(x^k, y^k)\}$ be the sequence generated by Algorithm~\ref{algorithm:spacom}.
Then the iterates $y^k$ and $y^{k+1}$ satisfy
\begin{equation}\label{udescentequ}
\E[\|y^{k+1}- y_k^*(x^k)\|^2\mid \F_k]
\le(1-\sigma_k \beta_k)\|y^{k}-y_k^*(x^k)\|^2+\beta_k^2 \delta^2.
\end{equation}
\end{lemma}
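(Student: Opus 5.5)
The plan is to combine the contraction of a projected-gradient step for a strongly concave, smooth function with an unbiased-noise decomposition. Fix $k$ and write $y_k^*:=y_k^*(x^k)$. Everything in the bound is conditioned on $\F_k$, so $x^k$, $y^k$ and $y_k^*$ are deterministic there. By Lemma~\ref{gradientLipschitz}, $\LL_k(x^k,\cdot)$ is $L_k$-smooth on $Y$. The penalty term $-\frac{\rho_k}{2}\|[c]_+\|^2$ is concave in $y$, since each $[c_i(x,\cdot)]_+^2$ is convex, being a nondecreasing convex function composed with a convex function. Together with the concavity of $f(x,\cdot)$ and the term $-\frac{\sigma_k}{2}\|y\|^2$, this makes $\LL_k(x^k,\cdot)$ $\sigma_k$-strongly concave. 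The maximizer $y_k^*$ satisfies the fixed-point identity $y_k^*=\P_Y(y_k^*+\beta_k\nabla_y\LL_k(x^k,y_k^*))$.

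Next we split the stochastic direction as $d_y^k=g^k+e^k$. Here $g^k:=\nabla_y\LL_k(x^k,y^k)$, and $e^k:=\nabla_yF(x^k,y^k;\xi_k^y)-\nabla_y f(x^k,y^k)$, because the penalty and regularization gradients are computed exactly. By Assumption~\ref{assumgradient}, $\E[e^k\mid\F_k]=0$ and $\E[\|e^k\|^2\mid\F_k]\le\delta^2$. Let $\bar y^{k+1}:=\P_Y(y^k+\beta_kg^k)$ be the noiseless update.

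The main obstacle is that the projection is nonlinear, so the noise cannot be pulled out directly. One route uses nonexpansiveness, giving $\|y^{k+1}-y_k^*\|\le\|y^k+\beta_k d_y^k-y_k^*-\beta_k\nabla_y\LL_k(x^k,y_k^*)\|$. Squaring and taking conditional expectation kills the cross term with $e^k$ and leaves $\|y^k-y_k^*+\beta_k(g^k-\nabla_y\LL_k(x^k,y_k^*))\|^2+\beta_k^2\delta^2$. For the deterministic part, we expand the square and use co-coercivity for the concave, $L_k$-smooth map. This step is valid because $-\LL_k(x^k,\cdot)$ is convex and $L_k$-smooth on a neighborhood of $Y$; if needed, we justify it via the strong concavity decomposition $h=\LL_k+\frac{\sigma_k}{2}\|\cdot\|^2$, which is concave and $(L_k-\sigma_k)$-smooth. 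The standard estimate for a gradient step on a $\sigma$-strongly convex, $L$-smooth function with step $\beta\le1/L$ then gives
\[\|y^k-y_k^*+\beta_k(g^k-\nabla_y\LL_k(x^k,y_k^*))\|^2\le(1-\sigma_k\beta_k)\|y^k-y_k^*\|^2 .\]
Concretely, we use $\langle g-g^*,y-y^*\rangle\le-\frac{\sigma L}{\sigma+L}\|y-y^*\|^2-\frac{1}{\sigma+L}\|g-g^*\|^2$ together with $\beta\le1/L$. A simpler alternative is $\langle g-g^*,y-y^*\rangle\le-\sigma\|y-y^*\|^2$ plus co-coercivity of the concave part. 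Either way, the factor is at most $1-\beta_k\sigma_k(2-\beta_kL_k)\le1-\sigma_k\beta_k$, up to checking constants.

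Combining the two pieces yields \eqref{udescentequ}. The only delicate point is the validity of the co-coercivity inequality on the constrained set $Y$, as opposed to on all of $\IR^m$. If it fails there, I would instead run the argument through the three-point/projection inequality for $\bar y^{k+1}$, using smoothness and strong concavity as function-value inequalities, which need only hold on $Y$. I would then bound $\|y^{k+1}-\bar y^{k+1}\|\le\beta_k\|e^k\|$ and handle the cross term via $\E[e^k\mid\F_k]=0$, noting that $\bar y^{k+1}$ is $\F_k$-measurable.
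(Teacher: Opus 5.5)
Your main route is correct and is essentially the paper's proof. The paper simply applies a standard projected-SGD contraction lemma (Xiao et al., Lemma~21) to $g_k=-\LL_k(x^k,\cdot)$, and your combination of the fixed-point identity, nonexpansiveness of $\P_Y$, unbiasedness of $e^k$ to remove the cross term, and the $\sigma_k$-strongly-convex, $L_k$-smooth gradient-step contraction with $\beta_k\le 1/L_k\le 2/(\sigma_k+L_k)$ is exactly how that lemma is proved.

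Two cautions. First, Assumption~\ref{assum1} gives concavity only on $Y$, so you cannot appeal to a neighborhood of $Y$ to justify co-coercivity. Instead, replace $\nabla_y\LL_k$ by its component in the direction space of $\operatorname{aff}Y$, which leaves the $\P_Y$-iterates unchanged, then average the a.e.\ Hessian along segments in $\operatorname{ri}Y$ and pass to the closure. Second, drop the fallback. Expanding around $\bar y^{k+1}$ leaves the cross term $\langle \bar y^{k+1}-y_k^*,\,y^{k+1}-\bar y^{k+1}\rangle$, which is not mean-zero because $\P_Y$ is nonlinear. The usual repair, a three-point inequality at $y^{k+1}$ itself, only gives $2\beta_k^2\delta^2$ rather than the stated $\beta_k^2\delta^2$.
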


Because \(x^k\), \(\rho_k\), and \(\sigma_k\) vary with \(k\), the exact
maximizer \(y_k^*(x^k)\) also moves along the trajectory. The following bound gives a recursion for the tracking error.

\begin{lemma}\label{ydescentlemma}
Let $\{(x^k,y^k)\}$ be the sequence generated by Algorithm~\ref{algorithm:spacom}.
Suppose that $\{\rho_k\}$ and $\{\sigma_k\}$ satisfy
$\rho_{k+1}\ge \rho_k>0$ and $\sigma_k\ge\sigma_{k+1}>0$, and that the
stepsize sequence $\{\beta_k\}$ satisfies \(0<\beta_k\le 1/L_k\) for each
\(k\). Then
\begin{equation}
    \begin{aligned}
    &\E[\|y^{k+1}-y_{k+1}^*(x^{k+1})\|^2\mid \F_k]-\|y^{k}-y_{k}^*(x^{k})\|^2\\
    \le\;&-\frac{1}{2}\beta_k\sigma_k\|y^{k}-y_{k}^*(x^{k})\|^2+(1+\frac{1}{2}\beta_k\sigma_k)\beta_k^2\delta^2+2(1+\frac{2}{\beta_k\sigma_k})\frac{L_k^2}{\sigma_k^2}\E[\|x^{k+1}-x^{k}\|^2\mid \F_k] \\
            & +2(1+\frac{2}{\beta_k\sigma_k})\left(\frac{2(\rho_{k+1}-\rho_{k})^2}{\sigma_k^2}M^4+\frac{2(\sigma_{k}-\sigma_{k+1})^2}{\sigma_k^2} D_y^2\right).
\end{aligned}
\end{equation}
\end{lemma}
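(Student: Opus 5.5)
The plan is to split the tracking error at the new iterate through the intermediate point $y_k^*(x^k)$ and then combine Lemma~\ref{lineardescent} with a sensitivity bound for the maximizer. Write $e_k:=\|y^k-y_k^*(x^k)\|$. For any $\epsilon>0$, Young's inequality gives
\[
\|y^{k+1}-y_{k+1}^*(x^{k+1})\|^2\le (1+\epsilon)\|y^{k+1}-y_k^*(x^k)\|^2+(1+\tfrac1\epsilon)\|y_k^*(x^k)-y_{k+1}^*(x^{k+1})\|^2 .
\]
I will take $\epsilon=\tfrac12\beta_k\sigma_k$, so that $1+\frac1\epsilon=1+\frac{2}{\beta_k\sigma_k}$. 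Conditioning on $\F_k$ and applying Lemma~\ref{lineardescent} to the first term gives
\[
(1+\tfrac12\beta_k\sigma_k)(1-\beta_k\sigma_k)e_k^2+(1+\tfrac12\beta_k\sigma_k)\beta_k^2\delta^2 .
\]
Since $(1+a/2)(1-a)\le 1-a/2$, this produces the $-\frac12\beta_k\sigma_k e_k^2$ term and the variance term in the statement.

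For the drift term I will split again:
\[
\|y_k^*(x^k)-y_{k+1}^*(x^{k+1})\|^2\le 2\|y_k^*(x^k)-y_k^*(x^{k+1})\|^2+2\|y_k^*(x^{k+1})-y_{k+1}^*(x^{k+1})\|^2 .
\]
Both pieces rely on strong concavity. $\LL_k(x,\cdot)$ is $\sigma_k$-strongly concave on $Y$, because $f$ is concave and each $[c_i]_+^2$ is convex (a square of a nonnegative convex function). This strong concavity yields the standard perturbation bound: if $y_1$ maximizes a $\sigma$-strongly concave $h_1$ over $Y$ and $y_2$ maximizes $h_2$ over $Y$, then
\[
\sigma\|y_1-y_2\|\le \|\nabla h_1(y_2)-\nabla h_2(y_2)\| .
\]
This follows by adding the two variational inequalities and using strong monotonicity of $-\nabla h_1$.

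For the $x$-change, I will apply this bound with $h_1=\LL_k(x^k,\cdot)$ and $h_2=\LL_k(x^{k+1},\cdot)$. Lemma~\ref{gradientLipschitz} then gives
\[
\|y_k^*(x^k)-y_k^*(x^{k+1})\|\le \frac{L_k}{\sigma_k}\|x^{k+1}-x^k\| .
\]
For the parameter change at the fixed point $x^{k+1}$, the gradient difference is
\[
\nabla_y\LL_k-\nabla_y\LL_{k+1}=(\rho_{k+1}-\rho_k)\nabla_y c\,[c]_+ +(\sigma_{k+1}-\sigma_k)y .
\]
Its norm is at most $(\rho_{k+1}-\rho_k)M^2+(\sigma_k-\sigma_{k+1})D_y$. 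Using $(a+b)^2\le 2a^2+2b^2$, the squared bound becomes
\[
\frac{2(\rho_{k+1}-\rho_k)^2M^4+2(\sigma_k-\sigma_{k+1})^2D_y^2}{\sigma_k^2}.
\]
Here I will use $\sigma_k$, the strong-concavity modulus of $\LL_k$, which is the correct one in the bound above; the monotonicity assumptions on $\rho_k,\sigma_k$ only fix the signs. Multiplying by the factor $2(1+\frac{2}{\beta_k\sigma_k})$ and taking conditional expectation gives exactly the displayed terms.

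The main obstacle is the perturbation bound with constraint set $Y$ and the bookkeeping of the constants. The bound $\|\nabla c\,[c]_+\|\le M\cdot M$ needs $\|[c]_+\|\le\|c\|\le M$ together with a norm bound on the Jacobian. Everything else is a routine combination of the pieces above.
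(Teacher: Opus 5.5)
Your proposal is correct and follows essentially the same route as the paper. The paper also applies Young's inequality with $\hat\delta=\frac12\beta_k\sigma_k$ and uses Lemma~\ref{lineardescent} for the ascent step. It splits the drift through $y_k^*(x^{k+1})$, bounding the $x$-change by $\frac{L_k}{\sigma_k}\|x^{k+1}-x^k\|$ and the parameter change via the $\sigma_k$-strong concavity of $\LL_k$. The only cosmetic difference is that you obtain both sensitivity bounds from a single perturbation inequality, whereas the paper cites a known result for the first and proves the second as a separate lemma.
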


We then relate the outer projected-gradient step to descent of the approximate
value function. The bound below separates the effects of inner tracking error,
stochastic gradient error, and the change in the regularization parameter.
\begin{lemma}\label{xdescentlemma}
Let $\{\rho_k\}$ and $\{\sigma_k\}$ be sequences such that
$\rho_{k+1} \ge \rho_{k}>0$ and $\sigma_{k} \ge\sigma_{k+1}>0$. Suppose the
step-size sequence $\{\beta_k\}$ satisfies
$0<\beta_k\le \frac{1}{L_k}$ for each $k$. Then
    \begin{equation}
    \begin{aligned}
        &\E[\varphi_{k+1}(x^{k+1})\mid \F_{k}]-\varphi_{k}(x^k)+\left( \frac{1}{4\alpha_k} - \frac{L_{\varphi_k}}{2}\right) \E[\|x^{k+1}-x^{k}\|^2\mid \F_{k}] \\
        \le\;&\frac{\alpha_k}{2}L_k^2\|y^{k}-y_{k}^*(x^k)\|^2+\frac{\alpha_k}{2}L_k^2\beta_k^2\delta^2+\alpha_k \E[\|e_x^k\|^2\mid \F_{k}]+\frac{1}{2}(\sigma_{k}-\sigma_{k+1})D_y^2.
    \end{aligned}
    \end{equation}
\end{lemma}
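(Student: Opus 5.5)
We prove Lemma~\ref{xdescentlemma} in three steps: split the change in the potential into a parameter-change part and a fixed-parameter descent part, bound the descent part with a projected-gradient inequality, and control the gradient error by splitting it into tracking, inner-noise, and outer-estimator pieces.

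\textbf{Step 1 (parameter change).} We write
\[
\varphi_{k+1}(x^{k+1})-\varphi_k(x^k)=\bigl[\varphi_{k+1}(x^{k+1})-\varphi_k(x^{k+1})\bigr]+\bigl[\varphi_k(x^{k+1})-\varphi_k(x^k)\bigr].
\]
For the first bracket we use $\rho_{k+1}\ge\rho_k$ and $\sigma_{k+1}\le\sigma_k$. These give, for every $(x,y)$,
\[
\LL_{k+1}(x,y)\le \LL_k(x,y)+\tfrac12(\sigma_k-\sigma_{k+1})\|y\|^2 .
\]
Taking the maximum over $y\in Y$ yields $\varphi_{k+1}(x)-\varphi_k(x)\le \frac12(\sigma_k-\sigma_{k+1})D_y^2$. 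This produces the last term of the claim.

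\textbf{Step 2 (fixed-parameter descent).} By Proposition~\ref{differentiable}, $\varphi_k$ is differentiable; we take $L_{\varphi_k}$ to be a Lipschitz constant of $\nabla\varphi_k$ (defined earlier or standard, of order $L_k+L_k^2/\sigma_k$). The descent lemma then gives
\[
\varphi_k(x^{k+1})\le\varphi_k(x^k)+\langle\nabla\varphi_k(x^k),x^{k+1}-x^k\rangle+\tfrac{L_{\varphi_k}}2\|x^{k+1}-x^k\|^2 .
\]
The projection optimality condition gives $\langle d_x^k,x^{k+1}-x^k\rangle\le-\frac1{\alpha_k}\|x^{k+1}-x^k\|^2$. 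Writing $\nabla\varphi_k(x^k)=d_x^k+(\nabla\varphi_k(x^k)-d_x^k)$ and applying Young's inequality with weight $\frac{1}{\alpha_k}$, we get
\[
\langle\nabla\varphi_k(x^k)-d_x^k,x^{k+1}-x^k\rangle\le \alpha_k\|\nabla\varphi_k(x^k)-d_x^k\|^2+\tfrac1{4\alpha_k}\|x^{k+1}-x^k\|^2 .
\]
Together these leave $-\bigl(\frac{3}{4\alpha_k}-\frac{L_{\varphi_k}}2\bigr)\|x^{k+1}-x^k\|^2$, which is stronger than the claimed $\frac1{4\alpha_k}$. If the constants require it, we instead use Young's inequality with the split $\frac{\alpha_k}{2}\|\cdot\|^2+\frac{1}{2\alpha_k}\|\cdot\|^2$ and keep the $\frac{1}{4\alpha_k}$ that remains. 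In either case the remaining task is to bound $\E\bigl[\|\nabla\varphi_k(x^k)-d_x^k\|^2\mid\F_k\bigr]$.

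\textbf{Step 3 (error decomposition).} We take $e_x^k:=d_x^k-\nabla_x\LL_k(x^k,y^{k+1})$, which is the estimator error of the momentum recursion and is the quantity appearing in the statement. Then
\[
\nabla\varphi_k(x^k)-d_x^k=\bigl[\nabla_x\LL_k(x^k,y_k^*(x^k))-\nabla_x\LL_k(x^k,y^{k+1})\bigr]-e_x^k .
\]
By Lemma~\ref{gradientLipschitz}, the first bracket is bounded in norm by $L_k\|y^{k+1}-y_k^*(x^k)\|$.

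Lemma~\ref{lineardescent} gives
\[
\E\bigl[\|y^{k+1}-y_k^*(x^k)\|^2\mid\F_k\bigr]\le\|y^k-y_k^*(x^k)\|^2+\beta_k^2\delta^2,
\]
after dropping the contraction factor $1-\sigma_k\beta_k\le 1$. Combining this with the Lipschitz bound yields the terms $\frac{\alpha_k}{2}L_k^2\|y^k-y_k^*\|^2$ and $\frac{\alpha_k}{2}L_k^2\beta_k^2\delta^2$, together with $\alpha_k\E[\|e_x^k\|^2\mid\F_k]$.

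\textbf{Main obstacle.} The expected difficulty is the constant bookkeeping in Step 3. A naive split $\|a+b\|^2\le2\|a\|^2+2\|b\|^2$ would give coefficients $2\alpha_k$ rather than $\frac{\alpha_k}{2}$ and $\alpha_k$. To match the stated coefficients, we will rebalance the Young weights in Step 2 separately for the two error pieces. We will pair the $y$-tracking piece with a Young weight giving $\frac{\alpha_k}{2}$ on the error and $\frac{1}{2\alpha_k}$ on $\|x^{k+1}-x^k\|^2$, and pair $e_x^k$ with weight $\alpha_k$ and $\frac{1}{4\alpha_k}$. We will then absorb the displacement terms using the $\frac1{\alpha_k}$ available from the projection inequality, which leaves net $\frac1{4\alpha_k}$.

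Finally, we take conditional expectations with respect to $\F_k$. This is legitimate because $x^k$ and $y^k$ are $\F_k$-measurable, and Lemma~\ref{lineardescent} is applied under the same conditioning.
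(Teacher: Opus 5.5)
Your proposal is correct and follows the paper's proof. The ingredients match: the drift bound $\varphi_{k+1}(x)-\varphi_k(x)\le\frac12(\sigma_k-\sigma_{k+1})D_y^2$, the descent lemma combined with the projection inequality $\frac{1}{\alpha_k}\|x^{k+1}-x^k\|^2\le\langle -d_x^k,x^{k+1}-x^k\rangle$, separate Young splits ($\frac{\alpha_k}{2}L_k^2$ against $\frac{1}{2\alpha_k}$ for the tracking piece, $\alpha_k$ against $\frac{1}{4\alpha_k}$ for $e_x^k$) leaving net $\frac{1}{4\alpha_k}$, and finally Lemma~\ref{lineardescent}. The only cosmetic difference is that the paper first conditions on $\F_{k+\frac12}$ and then uses the tower property, whereas you apply $\E[\cdot\mid\F_k]$ directly to a pathwise inequality, which is equally valid.
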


It remains to control the estimator error appearing in the previous descent
inequality. The recursive structure of \(d_x^k\) yields the following variance
bound.

\begin{lemma}\label{variancereduction}
Let $\{(x^k,y^k)\}$ be the sequence generated by Algorithm~\ref{algorithm:spacom},
and define
\[
e_x^k:=d_x^k-\nabla_x\LL_k(x^k,y^{k+1}).
\]
Suppose that $\{\rho_k\}$ and $\{\sigma_k\}$ satisfy
$\rho_{k+1}\ge \rho_k>0$ and $\sigma_k\ge\sigma_{k+1}>0$, that
\(0<\beta_k\le 1/L_k\), and that \(0\le \eta_k\le 1\). Then, for every
\(k>0\),
\begin{equation}
    \begin{aligned}
        \E[\|e_x^{k}\|^2\mid \F_{k}]\le\;&(1-\eta_{k})^2\|e_x^{k-1}\|^2+2\eta_{k}^2\delta^2+18L_{k}^2\left(\|x^{k}-x^{k-1}\|^2+3\beta_{k}^2\delta^2\right.\\&\left.+3\beta_{k}^2L_{k}^2\|y^{k}-y^*_{k}(x^{k})\|^2
        +3\beta_{k}^2(M+\rho_{k}pM^2+\sigma_k D_y )^2\right),
    \end{aligned}
\end{equation}
\end{lemma}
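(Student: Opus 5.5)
Write $g_k(x,y):=\nabla_x\LL_k(x,y)$ and $G_k(x,y;\xi):=\nabla_x\Psi_k(x,y;\xi)$, so that the stochastic $x$-gradients of $\Psi_k$ and $\LL_k$ differ only through $F$ and $f$. We start from the recursion \eqref{direction_x_iterative average} and write $e_x^k$ as a damped previous error plus two fresh noise terms:
\[
e_x^k=(1-\eta_k)e_x^{k-1}+\eta_k\bigl(G_k(x^k,y^{k+1};\xi_k^x)-g_k(x^k,y^{k+1})\bigr)+(1-\eta_k)Z_k,
\]
where
\[
Z_k:=G_k(x^k,y^{k+1};\xi_k^x)-G_{k-1}(x^{k-1},y^k;\xi_k^x)-\bigl(g_k(x^k,y^{k+1})-g_{k-1}(x^{k-1},y^k)\bigr).
\]
This is the standard STORM decomposition. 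The penalty and regularization parts of $G_k$ are deterministic, so the noise is $\nabla_xF-\nabla_xf$. Hence the second term has conditional second moment at most $\eta_k^2\delta^2$.

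The key step is the cross terms. Condition first on $\F_{k+\frac12}$. Given $\F_{k+\frac12}$, the vectors $e_x^{k-1}$, $x^k$, $x^{k-1}$, $y^k$ and $y^{k+1}$ are all fixed, while $\xi_k^x$ is independent of them. By Assumption~\ref{assumgradient}, both fresh noise terms therefore have zero conditional mean, so their inner products with $(1-\eta_k)e_x^{k-1}$ vanish. The remaining cross term between the two noise pieces we bound with $\|a+b\|^2\le2\|a\|^2+2\|b\|^2$. This gives
\[
\E[\|e_x^k\|^2\mid\F_{k+\frac12}]\le(1-\eta_k)^2\|e_x^{k-1}\|^2+2\eta_k^2\delta^2+2\,\E\bigl[\|Z_k\|^2\mid\F_{k+\frac12}\bigr].
\]
We then drop the subtracted mean in $Z_k$, since variance is at most the second moment, and take $\E[\cdot\mid\F_k]$ using the tower property. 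Note that $e_x^{k-1}$ is $\F_k$-measurable: $d_x^{k-1}$ uses $\xi_{k-1}^x$ and $y^k$ uses $\xi_{k-1}^y$.

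It remains to bound $\E\|G_k(x^k,y^{k+1};\xi)-G_{k-1}(x^{k-1},y^k;\xi)\|^2$. Here the parameter change from $k-1$ to $k$ matters, because $\LL_{k-1}$ and $\LL_k$ have different $\rho$ and $\sigma$. Two observations handle it. First, $\sigma$ does not enter $\nabla_x$. Second, the $F$-part is $L_F\le L_f$ mean-square Lipschitz by Assumption~\ref{stochasticsmooth}, and the penalty part $\rho\nabla_xc^\top[c]_+$ is Lipschitz with constant $\rho(pML_c+pM^2)\le L_k$. The penalty part also has a jump of size $(\rho_k-\rho_{k-1})\|\nabla_xc^\top[c]_+\|$ coming from the change of $\rho$. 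This jump must be absorbed into the stated constant using $\rho_{k-1}\le\rho_k$, perhaps by bounding the jump crudely by $\rho_kpM^2$ times appropriate factors. This is where the numerical constants come from. Even so, I expect this parameter-drift term to be the main obstacle, because the stated bound contains no explicit $(\rho_k-\rho_{k-1})$ term. I would first try to absorb it; if it cannot be absorbed, the route is to check whether the estimator's definition or the bookkeeping lets it cancel. Granting this, we get a bound of the form $C L_k^2(\|x^k-x^{k-1}\|^2+\|y^{k+1}-y^k\|^2)$.

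Finally, we control $\|y^{k+1}-y^k\|$. By nonexpansiveness of $\P_Y$, $\|y^{k+1}-y^k\|\le\beta_k\|d_y^k\|$. Next split $d_y^k$ into three parts:
\begin{itemize}
\item the noise, with second moment at most $\delta^2$;
\item $\nabla_y\LL_k(x^k,y^k)-\nabla_y\LL_k(x^k,y_k^*(x^k))$, of norm at most $L_k\|y^k-y_k^*(x^k)\|$;
\item $\nabla_y\LL_k(x^k,y_k^*(x^k))$, bounded by $M+\rho_kpM^2+\sigma_kD_y$.
\end{itemize}
The factor $3$ comes from $\|a+b+c\|^2\le3(\|a\|^2+\|b\|^2+\|c\|^2)$. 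Collecting constants ($2\cdot 9=18$, say, from the two-fold split for $Z_k$ combined with the three-fold split inside) yields the claimed inequality.
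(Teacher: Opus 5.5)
\textbf{There is a genuine gap at the parameter-drift step you flagged, and your own move of dropping the mean in $Z_k$ creates it.} The rest is correct and matches the paper: the STORM decomposition of $e_x^k$, the vanishing of the cross terms with $e_x^{k-1}$ after conditioning on $\F_{k+\frac12}$, and the three-way split of $d_y^k$.

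Replacing $\E[\|Z_k\|^2\mid\F_{k+\frac12}]$ by the second moment of $G_k(x^k,y^{k+1};\xi)-G_{k-1}(x^{k-1},y^k;\xi)$ is a true inequality, but too lossy. The integrand now contains the deterministic term $(\rho_k-\rho_{k-1})\sum_{i=1}^p[c_i(x^{k-1},y^k)]_+\nabla_x c_i(x^{k-1},y^k)$ (up to sign), which does not vanish when the iterates do not move. It cannot be absorbed. Apart from $(1-\eta_k)^2\|e_x^{k-1}\|^2$ and $2\eta_k^2\delta^2$, every term on the lemma's right-hand side carries either $\|x^k-x^{k-1}\|^2$ or a factor $\beta_k^2L_k^2$. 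The hypothesis $0<\beta_k\le 1/L_k$ bounds $\beta_k$ only from above. Take $x^k=x^{k-1}$ and let $\beta_k\to0$: the lemma's right-hand side tends to $(1-\eta_k)^2\|e_x^{k-1}\|^2+2\eta_k^2\delta^2$. Your intermediate bound, however, keeps an extra term of order $(\rho_k-\rho_{k-1})^2$. So bounding the jump crudely by $\rho_kpM^2$ cannot close the argument for any constants, and the proof as written stops at ``Granting this''.

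\textbf{The fix is an observation you already used for the $\eta_k$-term.} For every $j$, $G_j(x,y;\xi)-g_j(x,y)=\nabla_xF(x,y;\xi)-\nabla_xf(x,y)$, because the penalty and regularization parts of $\Psi_j$ and $\LL_j$ coincide and are deterministic. Apply this to both halves of $Z_k$ before discarding anything:
\[
Z_k=\bigl(\nabla_xF(x^k,y^{k+1};\xi_k^x)-\nabla_xF(x^{k-1},y^k;\xi_k^x)\bigr)-\bigl(\nabla_xf(x^k,y^{k+1})-\nabla_xf(x^{k-1},y^k)\bigr).
\]
Now $\rho_k$ and $\rho_{k-1}$ disappear identically, so the drift cancels and no absorption is needed. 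Only at this point drop the mean and apply Assumption~\ref{stochasticsmooth}:
\[
\E[\|Z_k\|^2\mid\F_{k+\frac12}]\le L_F^2\bigl(\|x^k-x^{k-1}\|^2+\|y^{k+1}-y^k\|^2\bigr).
\]
Combined with your bound on $\|y^{k+1}-y^k\|$, this gives the lemma with $2L_F^2$ in place of $18L_k^2$, which is stronger since $L_F\le L_f\le L_k$.

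This is the paper's mechanism with different bookkeeping. The paper writes the same centered difference as $A_k+B_k-C_k$, where
\[
B_k:=\nabla_x\Psi_k(x^{k-1},y^k;\xi_k^x)-\nabla_x\Psi_{k-1}(x^{k-1},y^k;\xi_k^x)-\bigl(\nabla_x\LL_k(x^{k-1},y^k)-\nabla_x\LL_{k-1}(x^{k-1},y^k)\bigr)
\]
isolates the parameter change and vanishes identically. It then bounds the stochastic difference $A_k$ and the deterministic difference $C_k$ separately with the full constant $L_k$, which is where $2\cdot 3\cdot(2+1)=18$ comes from.
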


\subsection{Convergence Results}

We now combine the auxiliary estimates into convergence guarantees for SPACO.
The analysis is based on the following merit function, defined for \(k\ge1\):
\begin{equation}\label{V_def}
    \begin{aligned}
        V_k = \, &a_k(\varphi_k(x^k) - \underline{\varphi})
        + b_k\| y^{k}- y_{k}^*(x^k)\|^2
        + c_k\|e_x^{k-1}\|^2
        + d_k\|x^k-x^{k-1}\|^2,
    \end{aligned}
\end{equation}
where \(a_k,b_k,c_k,d_k>0\) are iteration-dependent coefficients,
\(e_x^{k}:=d_x^k-\nabla_x\LL_k(x^k,y^{k+1})\) is the error induced by the
inexact outer-gradient estimator, and \(\underline{\varphi}\) is the uniform
lower bound from Lemma~\ref{gradientLipschitz}.

By balancing these coefficients with the dynamic stepsizes and penalty
parameters, the preceding lemmas yield the following one-step descent
inequality.
\begin{proposition}\label{lyapunovproposition}
Let $\{(x^k, y^k)\}$ be generated by SPACO
(Algorithm~\ref{algorithm:spacom}) with
\(\sigma_k=\sigma_0(k+1)^{-t}\), \(\rho_k=\rho_0(k+1)^t\),
\(\alpha_k=\alpha_0(k+1)^{-6t-s}\),
\(\beta_k=\beta_0(k+1)^{-t-s}\), and
\(\eta_k=\eta_0(k+1)^{-s}\), where
\(\alpha_0,\beta_0,\eta_0,\sigma_0,\rho_0,t,s>0\). Set
\(a_k=(k+1)^{-2t}\), \(b_k=(k+1)^{-3t}\),
\(c_k=(k+1)^{-7t}\), and \(d_k=(k+1)^{-4t}\). If
\(0<t,s<1\), \(s>3t\), and \(8t+s<1\), then, for all sufficiently large
\(k\),
\begin{equation*}
    \E[V_{k+1}\mid \F_k]-V_{k}
    \le-\frac{a_k\alpha_k}{24}\E[\|\mathcal{G}_k(x^k)\|^2\mid \F_k]
    -\frac{b_k\beta_k\sigma_k}{4}\|y^{k}-y_{k}^*(x^k)\|^2
    +Cb_k\beta_k^2\delta^2 +\zeta_k ,
\end{equation*}
where
\(\mathcal{G}_k(x)=\frac{1}{\alpha_{k}}(x-P_{X}(x-\alpha_{k}\nabla \varphi_k(x)))\)
is the generalized gradient residual for \(\min_{x\in X}\varphi_k(x)\),
\(C>0\), and \(\{\zeta_k\}\) is summable and nonnegative.
\end{proposition}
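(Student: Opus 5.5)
We plan to expand $\E[V_{k+1}\mid\F_k]-V_k$ term by term and substitute Lemmas~\ref{xdescentlemma}, \ref{ydescentlemma} and \ref{variancereduction}, after handling the coefficient changes. Write
\[
\E[V_{k+1}\mid\F_k]-V_k = a_{k+1}\bigl(\E[\varphi_{k+1}(x^{k+1})\mid\F_k]-\varphi_k(x^k)\bigr)+(a_{k+1}-a_k)(\varphi_k(x^k)-\underline\varphi)+\cdots
\]
and treat the $b$, $c$, $d$ terms the same way. Since the coefficients are decreasing, $a_{k+1}-a_k\le0$ and $\varphi_k-\underline\varphi\ge0$, so every ``coefficient change'' term is nonpositive and can be dropped. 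We may then replace $a_{k+1},b_{k+1},c_{k+1}$ by $a_k,b_k,c_k$ in the multiplying positions (ratios tend to $1$), at the cost of a factor of at most $2$ for large $k$. For the $x$-descent part we use Lemma~\ref{xdescentlemma}. For the tracking part we use Lemma~\ref{ydescentlemma}. For $c_{k+1}\E\|e_x^k\|^2$ we use Lemma~\ref{variancereduction} with index $k$; it produces $(1-\eta_k)^2c_{k+1}\|e_x^{k-1}\|^2$, which beats $c_k\|e_x^{k-1}\|^2$ by roughly $\eta_kc_k$ of negative mass. The $d_{k+1}\E\|x^{k+1}-x^k\|^2$ term is kept as a positive cost, and $-d_k\|x^k-x^{k-1}\|^2$ is used to absorb the $18L_k^2c_{k+1}\|x^k-x^{k-1}\|^2$ term.

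Next we check exponents, with $L_k\asymp\rho_k\asymp(k+1)^t$ and $L_{\varphi_k}\lesssim L_k+L_k^2/\sigma_k\asymp(k+1)^{3t}$. The negative $x$-increment coefficient $a_k/(4\alpha_k)\asymp(k+1)^{4t+s}$ must dominate three costs:
\begin{itemize}
\item $a_kL_{\varphi_k}/2\lesssim(k+1)^{t}$;
\item $d_{k+1}\asymp(k+1)^{-4t}$;
\item the tracking-induced term $b_k\cdot\frac{4}{\beta_k\sigma_k}\frac{L_k^2}{\sigma_k^2}\asymp(k+1)^{-3t+2t+s+2t+2t}=(k+1)^{3t+s}$.
\end{itemize}
All three are dominated since $s>0$. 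The absorption $18L_k^2c_{k+1}\lesssim(k+1)^{-5t}\le d_k=(k+1)^{-4t}$ works for large $k$. The error term $a_k\alpha_k\|e_x^k\|^2$ is handled by the variance-reduction gain $\eta_kc_k\asymp(k+1)^{-7t-s}$ against $a_k\alpha_k\asymp(k+1)^{-8t-s}$, with the small mismatch absorbed. The tracking costs $a_k\frac{\alpha_k}{2}L_k^2\asymp(k+1)^{-6t-s}$ and $c_k\cdot54L_k^4\beta_k^2\asymp(k+1)^{-7t-2s}$ must be below $\frac14b_k\beta_k\sigma_k\asymp(k+1)^{-5t-s}$, which holds. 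The remaining $\frac14 b_k\beta_k\sigma_k\|y^k-y_k^*\|^2$ is what appears in the statement.

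To produce $\|\mathcal G_k(x^k)\|^2$ we use
\[
\|\mathcal G_k(x^k)\|^2\le \frac{3}{\alpha_k^2}\|x^{k+1}-x^k\|^2+3\|d_x^k-\nabla_x\LL_k(x^k,y^{k+1})\|^2+3L_k^2\|y^{k+1}-y_k^*(x^k)\|^2,
\]
which follows from nonexpansiveness of $\P_X$. Part of the $x$-increment descent $\asymp a_k/\alpha_k$ then converts into $-\frac{a_k\alpha_k}{24}\|\mathcal G_k\|^2$; the extra error and tracking terms are absorbed by the slack above, using Lemma~\ref{lineardescent} for $y^{k+1}$. The residual terms collect into $\zeta_k$:
\begin{itemize}
\item $a_k\alpha_kL_k^2\beta_k^2\delta^2\asymp(k+1)^{-6t-3s}$;
\item $c_k\eta_k^2\delta^2\asymp(k+1)^{-7t-2s}$;
\item $c_kL_k^4\beta_k^2(\rho_kM^2)^2\asymp(k+1)^{-7t+4t-2t-2s+2t}=(k+1)^{-3t-2s}$;
\item $a_k(\sigma_k-\sigma_{k+1})\asymp(k+1)^{-3t-1}$;
\item the parameter-drift terms $b_k\beta_k^{-1}\sigma_k^{-3}(\rho_{k+1}-\rho_k)^2\asymp(k+1)^{-3t+t+s+3t+2t-2}$.
\end{itemize}
These are summable provided the exponents exceed $1$; this is where $s>3t$ (so $3t+2s>1$ given $8t+s<1$? — to be verified) and $8t+s<1$ enter. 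The $b_k\beta_k^2\delta^2$ term is kept explicitly as $Cb_k\beta_k^2\delta^2$.

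The main obstacle is this bookkeeping. In particular, summability of the $c_k\beta_k^2L_k^2\rho_k^2$ term in Lemma~\ref{variancereduction} looks delicate, and I would first recheck whether $(k+1)^{-3t-2s}$ is summable under the stated constraints. If it is not, I would route it through the $d_k$ slack or choose the constant $C$ to absorb it against $b_k\beta_k^2$.
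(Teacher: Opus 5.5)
Your architecture is the paper's. You combine Lemmas~\ref{xdescentlemma}, \ref{ydescentlemma} and \ref{variancereduction} with the coefficient-change terms dropped. You convert $\E[\|x^{k+1}-x^k\|^2\mid\F_k]$ into $\|\mathcal G_k(x^k)\|^2$ via the three-term projection bound. You then compare orders for the $x$-increment, tracking, lag and estimator-error coefficients. For the last of these, the paper pushes $\tfrac98 a_k\alpha_k\E[\|e_x^k\|^2\mid\F_k]$ through Lemma~\ref{variancereduction} a second time; that is what your ``small mismatch absorbed'' has to mean concretely.

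The step that fails is the final accounting. You put the per-step noise terms into $\zeta_k$ and expect them to be summable, but the hypotheses do not give this. For the admissible choice $t=0.05$, $s=0.2$ one has $b_k\beta_k^2\asymp(k+1)^{-0.65}$ and $c_k\eta_k^2\asymp(k+1)^{-0.75}$, and $3t+2s=0.55<1$. These terms are exactly what the non-summable slot $Cb_k\beta_k^2\delta^2$ in the statement is for; this is why Theorem~\ref{convergethm} later pays $K^{1-5t-2s}$ and needs $2s+5t\neq1$. Your fallback of absorbing into $Cb_k\beta_k^2$ is the right idea, but it breaks on the exponent you computed, since $(k+1)^{-3t-2s}$ is not $\mathcal O((k+1)^{-5t-2s})$. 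Routing it through the $d_k$ slack cannot work either, because that term is a constant rather than a multiple of $\|x^k-x^{k-1}\|^2$.

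The exponent comes from a misreading of Lemma~\ref{variancereduction}. The deterministic term there is $18L_k^2\cdot 3\beta_k^2(M+\rho_kpM^2+\sigma_kD_y)^2$: a single factor $L_k^2$ times $(M+\rho_kpM^2+\sigma_kD_y)^2=\mathcal O(L_k^2)$. After multiplying by $c_{k+1}\le c_k$ it is $\mathcal O(c_k\beta_k^2L_k^4)=\mathcal O((k+1)^{-5t-2s})$.

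With this correction, every noise term is $\mathcal O((k+1)^{-5t-2s})=\mathcal O(b_k\beta_k^2)$. This covers $a_k\alpha_kL_k^2\beta_k^2$, $b_k\beta_k^2$, $c_k\eta_k^2$, $c_kL_k^2\beta_k^2$, $c_kL_k^4\beta_k^2$, and the second-pass terms $a_k\alpha_k\eta_k^2$ and $a_k\alpha_k\beta_k^2L_k^4$. The deterministic piece carries no $\delta^2$, so replace $\delta$ by $\max\{\delta,1\}$; then all of them fit into $Cb_k\beta_k^2\delta^2$.

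Only the parameter-drift terms belong in $\zeta_k$: $\tfrac12 a_k(\sigma_k-\sigma_{k+1})D_y^2$ and the $(\rho_{k+1}-\rho_k)^2$ and $(\sigma_k-\sigma_{k+1})^2$ terms from Lemma~\ref{ydescentlemma}. These are $\mathcal O((k+1)^{-1-3t})+\mathcal O((k+1)^{3t+s-2})$, summable because $3t+s<1$. So within the proposition only $3t+s<1$ is needed; $s>3t$ and $8t+s<1$ are what Theorem~\ref{convergethm} uses.

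Your value $(k+1)^{-7t-2s}$ for $c_k\beta_k^2L_k^4$ in the tracking comparison is also off; it is $(k+1)^{-5t-2s}$. That slip is harmless, since the term is still $o(b_k\beta_k\sigma_k)$.
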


Proposition~\ref{lyapunovproposition} is the main descent estimate used below.
It yields both finite-time complexity bounds and asymptotic convergence
guarantees.

For a possibly random triple taking values in
\(X\times Y\times\mathbb R_+^p\), define the expected reduced KKT residual
associated with the projected KKT system by
\[
\begin{aligned}
\mathcal{R}_{\rm KKT}(x,y,\lambda)
:= \max\Bigl\{&
\E\bigl[\|x-\mathcal P_X(x-\nabla_x \ML(x,y,\lambda))\|\bigr],\\
&\E\bigl[\|y-\mathcal P_Y(y+\nabla_y \ML(x,y,\lambda))\|\bigr],\;
\E\bigl[\|[c(x,y)]_+\|\bigr]
\Bigr\}.
\end{aligned}
\]
The expectations are taken with respect to all randomness in the triple.
This residual omits the complementarity component
\(|\lambda^\top c(x,y)|\) and should therefore be viewed as a reduced,
or equivalently complementarity-free, KKT residual.
Similar complementarity-free KKT residuals have also been considered
in the literature; see, e.g., \cite{zhang2024zeroth,zhang2024alternating}.

The next theorem gives non-asymptotic rates for stationarity and feasibility,
and, with the above modified definition, for the expected reduced KKT residual.
\begin{theorem}\label{convergethm}
Let $\{(x^k,y^k)\}$ be the sequence generated by SPACO
(Algorithm~\ref{algorithm:spacom}) with parameters selected as in
Proposition~\ref{lyapunovproposition}, and suppose that \(2s+5t\neq1\).
Then, for any \(\epsilon>0\), the number of iterations required to find an
iterate \(k\) satisfying
\[
\max\Big\{
\E[\|\mathcal{G}_k(x^k)\|],\;
\E[\|y^k-y_k^*(x^k)\|],\;
\E[\|[c(x^k,y^k)]_+\|]
\Big\}\le \epsilon,
\]
is at most
\(\mathcal O\big(\epsilon^{-\frac{1}{\tau}}\big)\),
where
$\tau:=\min\Big\{\frac{1-8t-s}{2},\;\frac{s-3t}{2},\;\frac{t}{2}\Big\}.$

Moreover, with \(\bar\lambda_{k}:=\rho_{k}[c(x^{k},y^{k})]_+\), the number of
iterations required to find an iterate \(k\) such that
\[
\mathcal{R}_{\rm KKT}(x^k,y^k,\bar\lambda_k) \le \epsilon
\]
is at most \(\mathcal O(\epsilon^{-\frac{1}{\tau}})\).
\end{theorem}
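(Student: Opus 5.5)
Starting from Proposition~\ref{lyapunovproposition}, I would take total expectations and telescope from a sufficiently large index $k_0$ to $K$. Since $V_k\ge0$ and $\{\zeta_k\}$ is summable, this gives
\[
\sum_{k=k_0}^{K}\Bigl(\tfrac{a_k\alpha_k}{24}\E\|\mathcal G_k(x^k)\|^2+\tfrac{b_k\beta_k\sigma_k}{4}\E\|y^k-y_k^*(x^k)\|^2\Bigr)\le \E V_{k_0}+C\delta^2\sum_{k=k_0}^K b_k\beta_k^2+\sum_k\zeta_k .
\]
With the prescribed schedules:
\begin{itemize}
\item $a_k\alpha_k\asymp (k+1)^{-8t-s}$;
\item $b_k\beta_k\sigma_k\asymp (k+1)^{-5t-s}$;
\item $b_k\beta_k^2\asymp (k+1)^{-5t-2s}$.
\end{itemize}
Since $8t+s<1$, the weights are not summable. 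The noise sum is $\mathcal O(1)$ if $5t+2s>1$ and $\mathcal O(K^{1-5t-2s})$ if $5t+2s<1$. The assumption $2s+5t\neq1$ only serves to avoid the logarithmic borderline case.

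Dividing by $\sum a_k\alpha_k\asymp K^{1-8t-s}$ and selecting the best iterate (or a random iterate with these weights) gives $\min_k\E\|\mathcal G_k(x^k)\|^2\lesssim K^{-(1-8t-s)}$, plus a noise-induced term $K^{-(1-8t-s)}\cdot K^{\max(0,1-5t-2s)}$. In the second case that term equals $K^{-(s-3t)}$. Applying Jensen then yields a rate of order $K^{-\min\{(1-8t-s)/2,(s-3t)/2\}}$ for $\E\|\mathcal G_k\|$.

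For the tracking error I would apply the same argument with weights $b_k\beta_k\sigma_k$. Its weighted noise average is $\beta_k/\sigma_k\asymp k^{-s}$, which is no worse than the bound above.

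I expect the main obstacle to be combining these estimates at a \emph{common} index. My fallback is to bound the minimum over a terminal block $[K/2,K]$. On that block all weights are comparable up to constants, so the sum of the three nonnegative quantities can be controlled simultaneously.

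For feasibility, I would show $\|[c(x,y_k^*(x))]_+\|=\mathcal O(\rho_k^{-1/2})$, which is of order $k^{-t/2}$; this is where the $t/2$ term in $\tau$ comes from. The argument compares optimality of $y_k^*(x)$ with a feasible $\hat y\in\Gamma(x)$, which exists by Assumption~\ref{assum1}:
\[
\LL_k(x,y_k^*)\ge\LL_k(x,\hat y).
\]
This gives $\tfrac{\rho_k}{2}\|[c]_+\|^2\le 2M+\sigma_kD_y^2$. Then
\[
\|[c(x^k,y^k)]_+\|\le\|[c(x^k,y_k^*)]_+\|+M\|y^k-y_k^*\|,
\]
using the Lipschitz continuity of $c$ with constant $M$. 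Collecting the three exponents gives the rate $\mathcal O(K^{-\tau})$, and hence an iteration count of $\mathcal O(\epsilon^{-1/\tau})$.

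For the KKT residual with $\bar\lambda_k=\rho_k[c(x^k,y^k)]_+$, I would use the identity $\nabla_{x,y}\ML(x,y,\bar\lambda_k)=\nabla_{x,y}\LL_k(x,y)+(0,\sigma_k y)$, which follows because $\nabla\tfrac12\|[c]_+\|^2=\nabla c^\top[c]_+$.

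\emph{$x$-part.} By nonexpansiveness of $\mathcal P_X$, the residual $\|x^k-\mathcal P_X(x^k-\nabla_x\ML)\|$ is bounded by a constant multiple of
\[
\|\mathcal G_k(x^k)\|+\|\nabla_x\LL_k(x^k,y^k)-\nabla_x\LL_k(x^k,y_k^*)\|.
\]
The constant comes from comparing projected residuals with stepsize $1$ versus $\alpha_k\le1$, which is monotone. The second term is at most $L_k\|y^k-y_k^*\|$.

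\emph{$y$-part.} The optimality condition $y_k^*=\mathcal P_Y(y_k^*+\nabla_y\LL_k(x^k,y_k^*))$ gives a residual bound of $(2+L_k)\|y^k-y_k^*\|+\sigma_kD_y$.

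The factor $L_k\asymp\rho_k$ multiplies the tracking error, and this is the delicate point. I would handle it through the weighted sum: the descent inequality controls $\sum b_k\beta_k\sigma_k\|y^k-y_k^*\|^2$, so I would bound $\rho_k^2\|y^k-y_k^*\|^2$ along the same block. If that is insufficient, I would instead invoke the $y$-part of Lemma~\ref{lineardescent}, which gives a one-step projected-ascent residual from $y^k$ to $y^{k+1}$. Verifying that the resulting exponent still falls within $\tau$ is the step I expect to require the most care.
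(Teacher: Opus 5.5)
Your proposal is correct and follows essentially the same route as the paper: you average the telescoped Lyapunov inequality over the terminal block $\{\lfloor K/2\rfloor,\dots,K\}$ to get a common index, bound feasibility by $\mathcal O(\rho_k^{-1/2})$ plus the triangle inequality, and use the same projected-residual comparisons for $\mathcal R_{\rm KKT}$. The step you flagged closes immediately without Lemma~\ref{lineardescent}: on that block $L_k\le CK^{t}$ deterministically, so $\E[L_k\|y^k-y_k^*(x^k)\|]\le C\bigl(K^{-(1-7t-s)/2}+K^{-(s-2t)/2}\bigr)$, and both exponents exceed $\tau$ by at least $t/2$.
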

We next state the asymptotic consequence of
Proposition~\ref{lyapunovproposition}. Combined with the limiting results in
Section~\ref{approximaiton}, the descent estimate implies that accumulation
points of SPACO satisfy enhanced KKT conditions.
\begin{theorem}\label{thm:converge_to_enhanced_kkt}
Let $\{(x^k, y^k)\}$ be the sequence generated by SPACO
(Algorithm~\ref{algorithm:spacom}) with parameters selected as in
Proposition~\ref{lyapunovproposition}, and suppose that \(2s+5t\neq1\).
Then, almost surely, there exists a convergent subsequence
$\{(x^{k_i}, y^{k_i})\}$ such that
\[
\lim_{i\to\infty}\|\mathcal{G}_{k_i}(x^{k_i})\|
=
\lim_{i\to\infty}\|y^{k_i}-y_{k_i}^*(x^{k_i})\|
=0.
\]
If such a subsequence converges to $(\bar x,\bar y)$ and GP\L{}CQ holds at
$(\bar x,\bar y)$, then $(\bar x,\bar y)$ is a Type-I enhanced KKT point of
the MCC~\eqref{MCC}. Furthermore, if $s>5t$ and $10t+s<1$, then this
subsequence can be chosen so that
\[
\lim_{i\to\infty}\rho_{k_i}\|\mathcal{G}_{k_i}(x^{k_i})\|
=
\lim_{i\to\infty}\rho_{k_i}\|y^{k_i}-y_{k_i}^*(x^{k_i})\|
=0,
\]
and, under the same GP\L{}CQ assumption, $(\bar x,\bar y)$ is a Type-II
enhanced KKT point of the MCC~\eqref{MCC}.
\end{theorem}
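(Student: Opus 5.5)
We will derive Theorem~\ref{thm:converge_to_enhanced_kkt} from the one-step descent inequality of Proposition~\ref{lyapunovproposition} via the Robbins--Siegmund supermartingale convergence theorem, and then invoke Theorem~\ref{thm:approx-to-type1} along a suitable subsequence.

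\textbf{Step 1: summability.} Since \(V_k\ge0\), and under \(s>3t\) and \(8t+s<1\) the terms \(Cb_k\beta_k^2\delta^2\asymp (k+1)^{-5t-2s}\) and \(\zeta_k\) are summable, Robbins--Siegmund applies. The condition \(5t+2s>1\) needs checking; if it fails, we will instead use a weighted version, or note that the complexity analysis of Theorem~\ref{convergethm} already controls the ratio of these terms to the descent terms, which is where \(2s+5t\neq1\) enters. The theorem then gives, almost surely, convergence of \(V_k\) and
\[
\sum_k a_k\alpha_k\|\mathcal G_k(x^k)\|^2<\infty,\qquad \sum_k b_k\beta_k\sigma_k\|y^k-y_k^*(x^k)\|^2<\infty .
\]
Here we replace the conditional expectation of \(\|\mathcal G_k\|^2\) by its realization; this is legitimate because \(\mathcal G_k(x^k)\) is \(\F_k\)-measurable. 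The weights are \(a_k\alpha_k\asymp (k+1)^{-8t-s}\) and \(b_k\beta_k\sigma_k\asymp(k+1)^{-5t-s}\), both non-summable since \(8t+s<1\). Hence \(\liminf\) of \(w_k:=\|\mathcal G_k(x^k)\|^2+\|y^k-y_k^*(x^k)\|^2\) is zero almost surely. Indeed, if \(w_k\ge c>0\) eventually, then \(\sum_k (k+1)^{-8t-s}w_k=\infty\), which is a contradiction. Combining both sums with the smaller weight \((k+1)^{-8t-s}\) gives a single index sequence \(k_i\) with \(w_{k_i}\to0\). By compactness of \(X\times Y\) (Assumption~\ref{assum:Xcompact}), we pass to a further convergent subsequence.

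\textbf{Step 2: Type-I.} Set \(u_k:=\nabla\varphi_k(x^k)-\mathcal G_k(x^k)\). From \(x^{k}-\alpha_k\mathcal G_k = \P_X(x^k-\alpha_k\nabla\varphi_k(x^k))\) we get \(\mathcal G_k(x^k)-\nabla\varphi_k(x^k)\in\mathcal N_X(x^k-\alpha_k\mathcal G_k)\). The normal cone is therefore evaluated at \(\tilde x_k:=x^k-\alpha_k\mathcal G_k(x^k)\), not at \(x^k\). So we use \(\tilde x_k\) as the sequence in Theorem~\ref{thm:approx-to-type1}, with residual
\[
\tilde u_k:=\nabla\varphi_k(\tilde x_k)-\nabla\varphi_k(x^k)+\mathcal G_k(x^k)\in \nabla\varphi_k(\tilde x_k)+\mathcal N_X(\tilde x_k).
\]
Its norm is at most \((1+\alpha_kL_{\varphi_k})\|\mathcal G_k(x^k)\|\), and \(\alpha_kL_{\varphi_k}\to0\) because \(L_{\varphi_k}=O(\rho_k^2/\sigma_k)=O(k^{3t})\) while \(\alpha_k=O(k^{-6t-s})\). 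Also \(\tilde x_{k_i}\to\bar x\). Lipschitz continuity of \(y_k^*\), with constant \(L_k/\sigma_k\), gives
\[
\|y^*_k(\tilde x_k)-y^k\|\le \frac{L_k}{\sigma_k}\,\alpha_k\|\mathcal G_k(x^k)\| + \|y^k-y_k^*(x^k)\|\to0,
\]
so \(y^*_{k_i}(\tilde x_{k_i})\to\bar y\). Theorem~\ref{thm:approx-to-type1} under GP\L{}CQ then yields that \((\bar x,\bar y)\) is a Type-I enhanced KKT point.

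\textbf{Step 3: Type-II, the main obstacle.} We need \(\rho_{k_i}^2 w_{k_i}\to0\). Since \(\rho_k^2\asymp k^{2t}\), we repeat Step 1 with the weights rewritten as \(a_k\alpha_k\rho_k^{-2}\cdot\rho_k^2\|\mathcal G_k\|^2\), which gives weights \((k+1)^{-10t-s}\), and \((k+1)^{-7t-s}\) for the tracking term. Both are non-summable exactly when \(10t+s<1\), so \(\liminf \rho_k^2 w_k=0\). The condition \(s>5t\) is presumably what keeps the summable remainders (the \(\zeta_k\) terms and the noise term) under control at this stronger scale. We also need \(\rho_k\|\tilde u_k\|\to0\), which follows from the same bound as in Step 2, together with \(\rho_k\cdot\frac{L_k}{\sigma_k}\alpha_k\|\mathcal G_k(x^k)\|\to0\).

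I expect the delicate points to be the verification of summability of the noise term in Step 1 and the bookkeeping of exponents in Step 3. In Step 3 I would first try to use only the rates already given by Proposition~\ref{lyapunovproposition}, and fall back on a slightly sharper tracking bound if needed. The Type-II conclusion then follows from the second part of Theorem~\ref{thm:approx-to-type1}.
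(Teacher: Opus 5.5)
Your Step 2 matches the paper's construction: $\widehat x_k=\mathcal P_X(x^k-\alpha_k\nabla\varphi_k(x^k))$, a residual bounded by $(1+\alpha_kL_{\varphi_k})\|\mathcal G_k(x^k)\|$, and transfer of $y_k^*$ via its $L_k/\sigma_k$-Lipschitz bound. That part is fine.

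\textbf{The gap is in Step 1, on which Steps 2 and 3 rest.} Summability of $Cb_k\beta_k^2\delta^2\asymp(k+1)^{-5t-2s}$ requires $5t+2s>1$. The hypotheses $0<t,s<1$, $s>3t$, $8t+s<1$ do not imply this. For example, $(t,s)=(0.05,0.2)$, the values used in the paper's experiments, give $5t+2s=0.65$. In that regime Robbins--Siegmund gives nothing, so $\sum_k a_k\alpha_k\|\mathcal G_k(x^k)\|^2<\infty$ a.s.\ is not established. Your fallback (``a weighted version, or \ldots the complexity analysis'') is never carried out. You also misplace the role of $2s+5t\neq1$: it only avoids a logarithm in $\sum_{k\le K}b_k\beta_k^2\asymp K^{1-5t-2s}$ and has nothing to do with summability. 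For the same reason, your Step 3, which would give Type-II under $10t+s<1$ alone, is valid only when $5t+2s>1$. In the complementary regime, $s>5t$ is exactly what is needed.

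\textbf{The missing idea.} The non-summable noise is still dominated by the descent weights: $b_k\beta_k^2/(a_k\alpha_k)\asymp k^{3t-s}\to0$ and $b_k\beta_k^2/(b_k\beta_k\sigma_k)\asymp k^{-s}\to0$. You can exploit this in one of two ways.
\begin{itemize}
\item[(i)] \emph{The paper's route, in expectation.} Sum the one-step bound over $I_K=\{\lfloor K/2\rfloor,\dots,K\}$, using $\E[V_{\lfloor K/2\rfloor}]\le C(1+K^{1-5t-2s})$, and average. This yields $k_K\in I_K$ with $\E\|\mathcal G_{k_K}(x^{k_K})\|^2\le C(K^{-(1-8t-s)}+K^{-(s-3t)})$ and $\E\|y^{k_K}-y^*_{k_K}(x^{k_K})\|^2\le C(K^{-(1-5t-s)}+K^{-s})$. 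Along $K_j=4^j$ these bounds are summable, so Tonelli gives a.s.\ convergence to zero along $k_j$. Multiplying by $\rho_{k_j}^2\asymp 4^{2jt}$ leaves exponents $1-10t-s$ and $s-5t$, which gives the Type-II conditions.
\item[(ii)] \emph{Your weighted idea, made rigorous pathwise.} Since $V_k\ge0$, any nonincreasing $\gamma_k>0$ gives $\E[\gamma_{k+1}V_{k+1}\mid\F_k]\le\gamma_kV_k-\gamma_{k+1}(\text{descent terms})+\gamma_{k+1}(Cb_k\beta_k^2\delta^2+\zeta_k)$. Take $\gamma_k=(k+1)^{-(1-5t-2s)-\epsilon}$. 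Then the noise becomes summable. The weight on $\|\mathcal G_k(x^k)\|^2$ becomes $(k+1)^{-1-3t+s-\epsilon}$, which is non-summable for $\epsilon\le s-3t$. The weight on $\rho_k^2\|\mathcal G_k(x^k)\|^2$ becomes $(k+1)^{-1-5t+s-\epsilon}$, and this is precisely where $s>5t$ enters. This version even handles $2s+5t=1$.
\end{itemize}
As written, however, your proposal replaces this step with a false summability claim. One of these arguments must be supplied before Steps 2--3 go through.
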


\section{Numerical Experiments}\label{sec:experiments}
In this section, we evaluate the convergence behavior of SPACO on controlled synthetic problems and illustrate the application potential of the constrained minimax formulation through two representative machine-learning applications: fairness-aware classification and GAN training.
Additional implementation details are provided in Appendix~\ref{app:experiment-details}, and the source code is available online\footnote{\url{https://github.com/zyxamos/SPACO}}.
All experiments were run on a server with dual Intel Xeon Gold 5218R CPUs and two NVIDIA H100 PCIe GPUs.

\subsection{Synthetic Examples}

We first test SPACO on synthetic examples with known solutions.
The experiments consist of the deterministic Example~\ref{exm} and two stochastic problems with nonlinear and linear coupled constraints, respectively.
Across the two stochastic examples, we fix the dimension at $n=100$, set the noise variance to $\delta^2=1$, and sample the initial point uniformly from the feasible set over $10$ independent runs.
We compare SPACO with MGD~\cite{tsaknakis2023minimax}, GBAL~\cite{hu2026convergence}, MMPen~\cite{hu2024minimization}, and gradient descent-ascent with a fixed penalty (GDA-FP).
Performance is assessed by the solution error $\epsilon_x=\tfrac{\|x-x^*\|^2}{1 + \|x^0-x^*\|^2}$ and the inner problem error $\epsilon_y=\tfrac{\|y-y^*(x)\|^2}{1 + \|y^0-y^*(x)\|^2}$.

\textbf{Deterministic nonlinear case.} 
The first problem is a two-dimension deterministic instance with nonlinear coupled constraints, illustrated in Example~\ref{exm}.
Its min--min--max reformulation admits a spurious KKT point.
We run the algorithms with initial points sampled uniformly from the feasible set with mesh size $0.1$.

\textbf{Stochastic nonlinear case.}
The second example is a high-dimensional stochastic extension of Example~\ref{exm}:
\begin{equation}\label{eq:nonlinear-toy}
\begin{aligned}
    \min_{x\in X_{1}}\max_{y\in \Gamma_{1}(x)}
     \E_w&\left[\frac{n}{2}(\frac{\|x\|^2}{n}-1)^2 + 2 \left( \|x\|^2 - \frac{(e^{\top} x)^2}{n}\right) -\frac{\|y-\mathbf{e}\|^2}{2}+\frac{x^\top(y+w)}{2}\right],
\end{aligned}
\end{equation}
where $X_{1}=[-\tfrac{5}{4}, \tfrac{5}{4}]^n$ and $Y_1:=[-10, 10]^{n}$, the feasible set is 
\begin{equation*}
    \Gamma_{1}(x)=\left\{y\in Y_1 \mid \tfrac{1}{\sqrt{n}} \left( e^\top y - \tfrac{(e^{\top} x)^2}{n} \right) \le 0\right\},
\end{equation*}
$w$ is a Gaussian random vector with independent entries $w_{i} \sim \mathcal{N}(0,\delta^2)$, and $\mathbf{e}\in \mathbb{R}^n$ denotes the all-ones vector. For a fixed $x$, the expected inner solution is given by
$y^*(x) = \tfrac{x}{2} + ( 1 - [ 1 + \frac{e^\top x}{2n} - ( \frac{e^\top x}{n} )^2 ]_+ ) e$
, yielding the expected optimal solution $(x^*,y^*)=(\alpha \mathbf{e}, (1 + \tfrac{\alpha}{2}) \mathbf{e})$, where $\alpha \approx -1.0545$ is the negative root of $8 \alpha^3 - 7 \alpha + 2 = 0$.

\textbf{Stochastic linear case.}
The third instance considers a linear coupled constraint:
\begin{equation}\label{eq:linear-toy}
    \begin{aligned}
        \min_{x\in X_{2}} \max_{y\in \Gamma_{2}(x)}\;&
        \E_{W}\left[
            \frac{1}{2}x^\top (\bar{A} + W) x
            - \left(
                \frac{1}{2}\|y_1\|^2 - x^\top y_1 + e^\top y_2 + \frac{1}{2}\|y_2+2x+e\|^2
            \right)
        \right],
    \end{aligned}
\end{equation}
where $X_{2}=[-10,10]^n$ and $Y_2=[-20,20]^{2n}$ with $y=(y_1, y_2)$, and the feasible set is
\begin{equation*}
    \Gamma_2(x)=\{y\in Y_2 \mid \tfrac{1}{\sqrt{n}} \left( e^\top x + e^\top y_1 + e^\top y_2 \right) = 0\},
\end{equation*}
$W$ is a Gaussian random matrix with independent entries $W_{ij} \sim \mathcal{N}(0,\delta^2)$, and $\bar{A} \in \IR^{n\times n}$ satisfying $\bar{A}+I\succ 0$ is generated as $\bar{A}=\frac{1}{n}MM^\top + I_n$, where the entries of $M\in\IR^{n\times n}$ are drawn from $\mathcal{N}(0,1)$. For a fixed $x$, the inner solutions are $(y_1^*(x),y_2^*(x))=(x+e,-2x-e)$, which leads to the expected optimal solution $(x^*,y_1^*,y_2^*)=(-2(\bar{A}+I)^{-1}e,x^*+e,-2x^*-e)$.

\begin{figure}[t]
    \centering
    {   
        \small
        \hspace{1em}
        \legendItem[0.6]{spacoRed}{SPACO}~
        \legendItem[0.6]{gdaOrange}{GDA-FP}~
        \legendItem[0.6]{mgdGreen}{MGD}~
        \legendItem[0.6]{gbalBlue}{GBAL}~
        \legendItem[0.6]{mmpenPurple}{MMPen}
    }
    \vspace*{4pt}
    
    \begin{subfigure}[t]{0.35\linewidth}
        \centering
        \includegraphics[width=\linewidth]{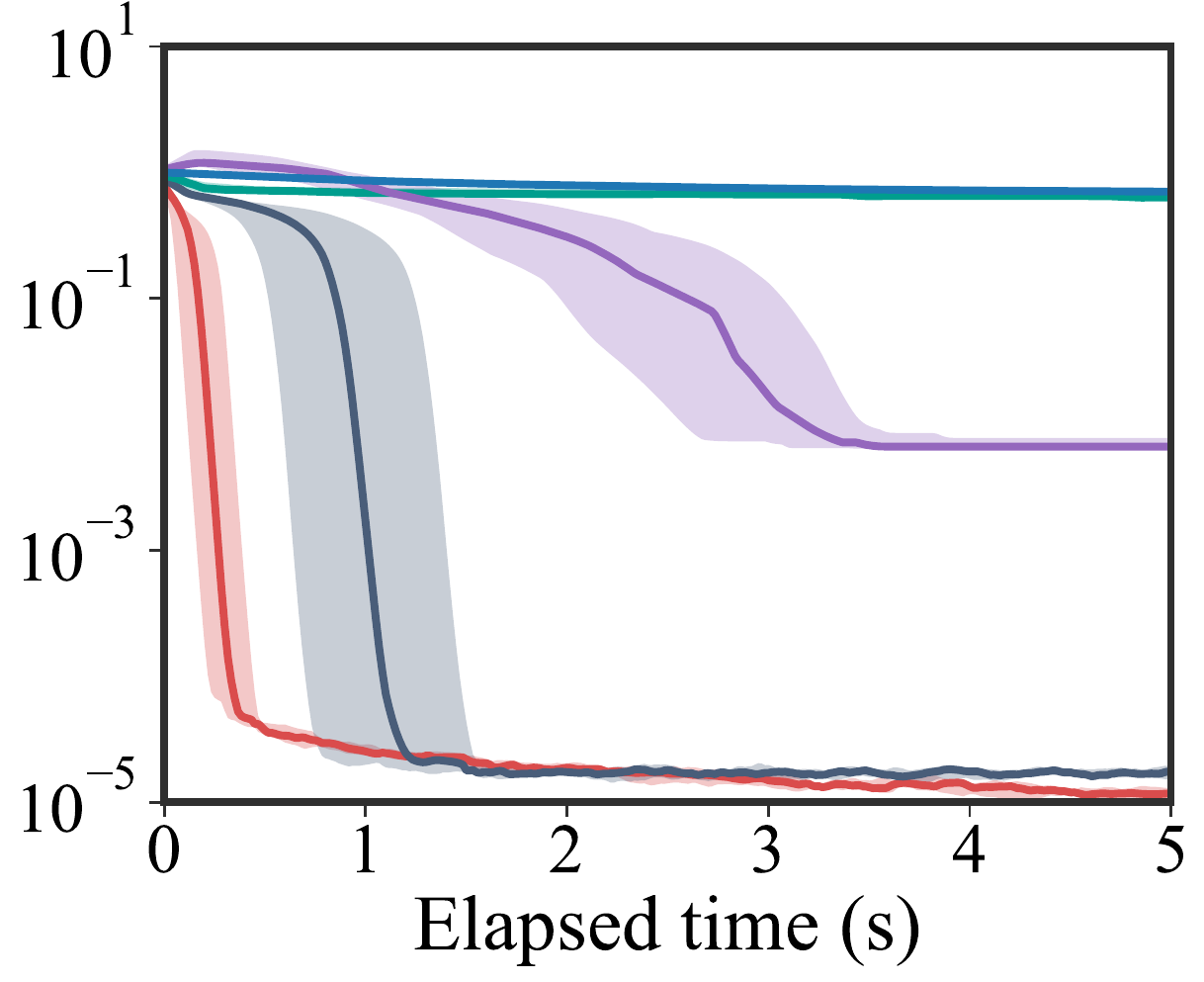}
        \caption{Solution error $\epsilon_x$}\label{fig:toy-nonlinear-x}
    \end{subfigure}
    \hspace{1em}
    \begin{subfigure}[t]{0.35\linewidth}
        \centering
        \includegraphics[width=\linewidth]{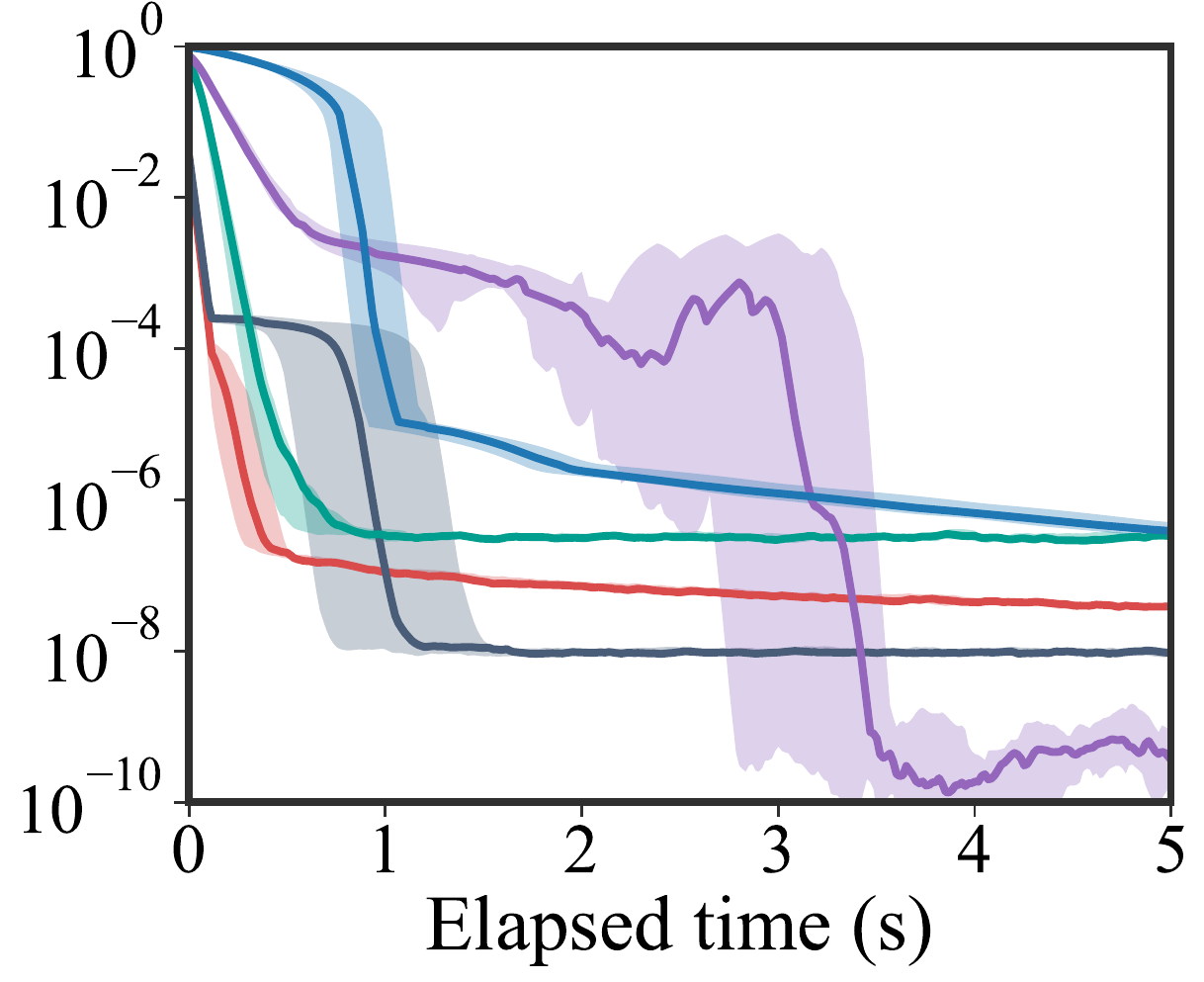}
        \caption{Inner problem error $\epsilon_y$}\label{fig:toy-nonlinear-y}
    \end{subfigure}
    \caption{
        Convergence curves for solving the nonlinearly constrained example~\eqref{eq:nonlinear-toy}.
    }\label{fig:toy-nonlinear}
\end{figure}

\begin{figure}[t]
    \centering
    {
        \small
        \hspace{1em}
        \legendItem[0.6]{spacoRed}{SPACO}~
        \legendItem[0.6]{gdaOrange}{GDA-FP}~
        \legendItem[0.6]{mgdGreen}{MGD}~
        \legendItem[0.6]{gbalBlue}{GBAL}~
        \legendItem[0.6]{mmpenPurple}{MMPen}
    }
    \vspace*{4pt}

    \begin{subfigure}[t]{0.35\linewidth}
        \centering
        \includegraphics[width=\linewidth]{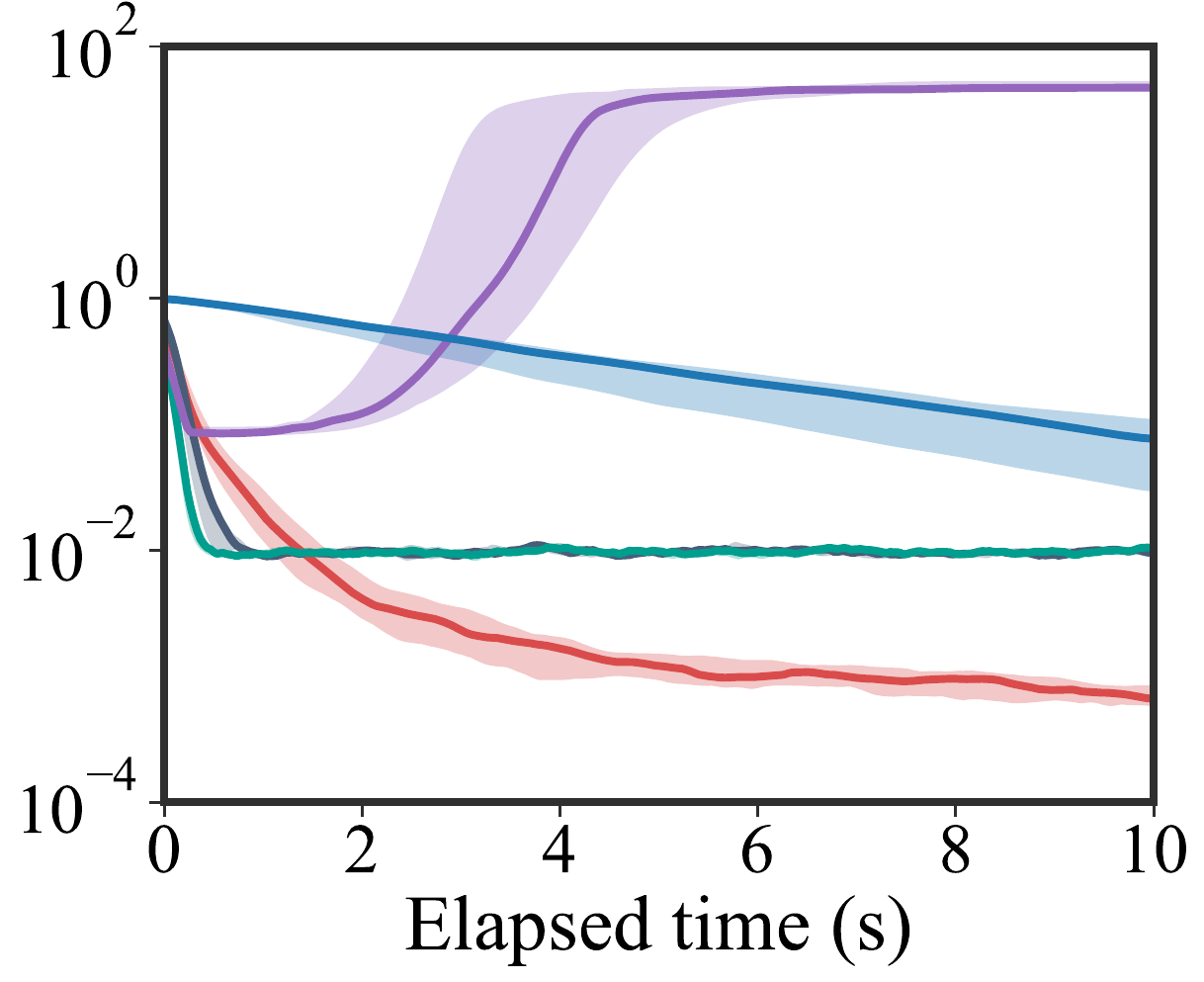}
        \caption{Solution error $\epsilon_x$}\label{fig:toy-linear-x}
    \end{subfigure}
    \hspace{1em}
    \begin{subfigure}[t]{0.35\linewidth}
        \centering
        \includegraphics[width=\linewidth]{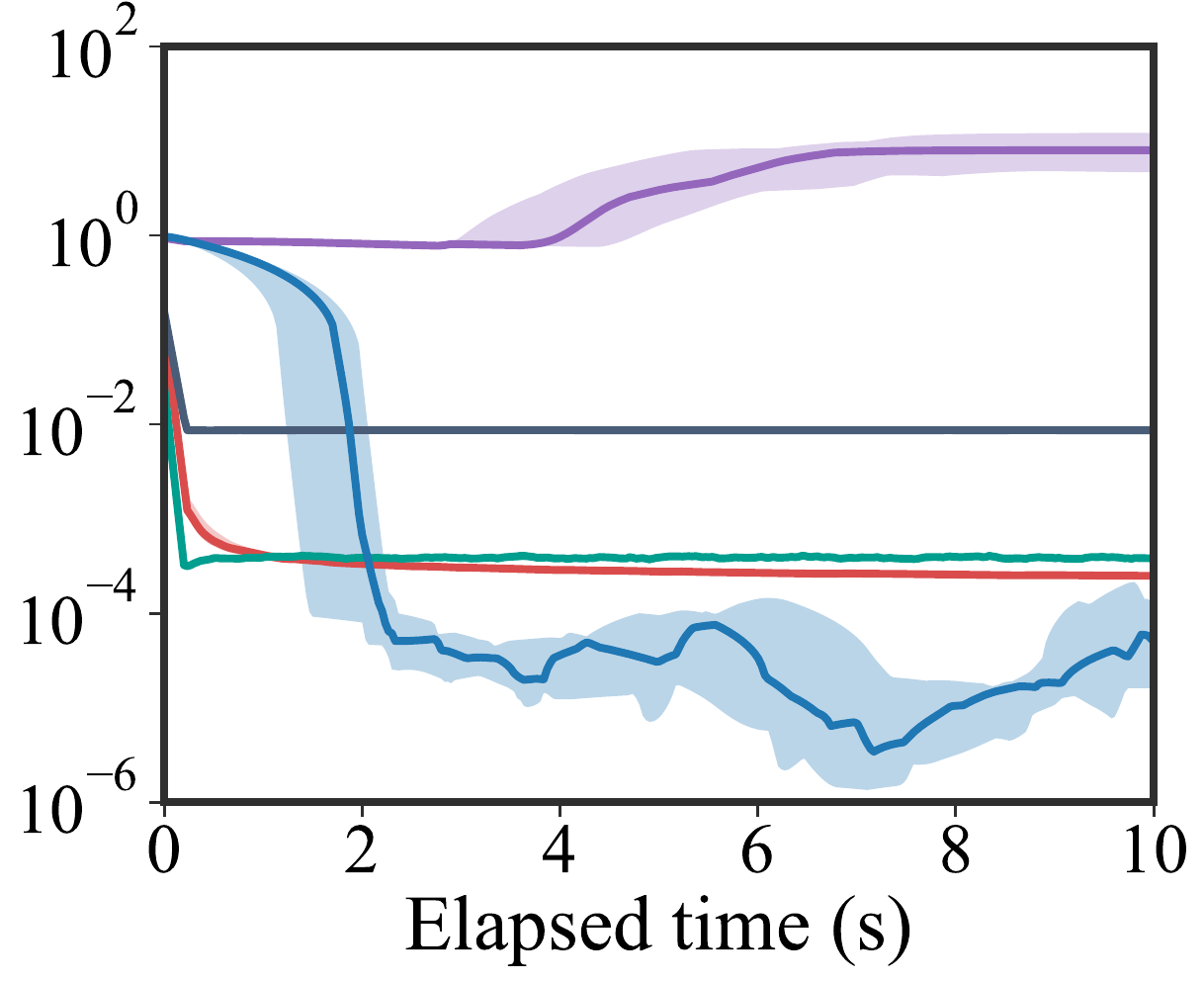}
        \caption{Inner problem error $\epsilon_y$}\label{fig:toy-linear-y}
    \end{subfigure}
    \caption{
        Convergence curves for solving the linearly constrained example~\eqref{eq:linear-toy}.
    }\label{fig:toy-linear}
\end{figure}

Figure~\ref{fig:intro-example} reports the convergence outcomes on deterministic Example~\ref{exm}: SPACO reaches the true solution from all tested initializations, whereas the min--min--max methods may be attracted to the spurious KKT point.
For the stochastic cases, Figures~\ref{fig:toy-nonlinear} and \ref{fig:toy-linear} show that SPACO drives the solution error $\epsilon_x$ to a lower value than the compared algorithms, which reflects the primary convergence progress because $x$ is the variable of the value function.
At the same time, SPACO achieves a comparable decrease in the inner problem error $\epsilon_y$, indicating that the inner problem is solved accurately enough to support the outer descent direction.
In the stochastic nonlinear example, which inherits the spurious-stationarity geometry of Example~\ref{exm}, the min--min--max methods are more prone to be affected by the spurious KKT points, while SPACO decreases $\epsilon_x$ faster than GDA-FP among the penalty-based methods.
In the linear example, where the coupled constraint is well structured, SPACO still solves the problem efficiently and accurately.

To assess the sensitivity to algorithmic parameters, Table~\ref{tab:ablation} reports the iterations required to achieve $\max\{\epsilon_x,\epsilon_y\}\le 10^{-4}$ on the nonlinearly constrained example~\eqref{eq:nonlinear-toy}.
Within the tested range, the weak dependence on the initial penalty $\rho_0$ and regularization $\sigma_0$ suggests that these parameters mainly serve as initial values of the dynamic schedule, rather than fixed modeling constants.
By contrast, the exponents $(t,s)$ have a more visible effect because they control the balance between feasibility enforcement, conditioning of the smoothed value function, and stochastic tracking accuracy, in line with the trade-off reflected in Theorem~\ref{convergethm}.

\begin{table}
    \centering
    \caption{Parameter sensitivity of SPACO on the nonlinearly constrained example~\eqref{eq:nonlinear-toy}. The entries in the Iter. column are reported as $\mathrm{mean}_{\pm\mathrm{std}}$ over $10$ independent runs.}%
    \label{tab:ablation}
    \resizebox{\linewidth}{!}{%
        \setlength{\tabcolsep}{5pt}%
        \begin{tabular}{ccccccc @{}}
            \toprule
            $\alpha_0$ & $\beta_0$ & $\rho_0$ & $\sigma_0$ & $t$ & $s$ & Iter. \\
            \midrule
            $0.1$ & $0.1$ & $10$ & $10^{-4}$ & $0.05$ & $0.2$ & $1171_{\pm 52}$ \\
            \midrule
            \cellcolor{Gray}{$\mathbf{0.05}$} & $0.1$ & $10$ & $10^{-4}$ & $0.05$ & $0.2$ & $4299_{\pm 230}$ \\
            \cellcolor{Gray}{$\mathbf{0.2}$}  & $0.1$ & $10$ & $10^{-4}$ & $0.05$ & $0.2$ & $452_{\pm 29}$ \\
            $0.1$ & \cellcolor{Gray}{$\mathbf{0.05}$} & $10$ & $10^{-4}$ & $0.05$ & $0.2$ & $1349_{\pm 70}$ \\
            $0.1$ & \cellcolor{Gray}{$\mathbf{0.2}$}  & $10$ & $10^{-4}$ & $0.05$ & $0.2$ & $1280_{\pm 118}$ \\
            $0.1$ & $0.1$ & \cellcolor{Gray}{$\mathbf{5}$} & $10^{-4}$ & $0.05$ & $0.2$ & $1194_{\pm 68}$ \\
            $0.1$ & $0.1$ & \cellcolor{Gray}{$\mathbf{20}$} & $10^{-4}$ & $0.05$ & $0.2$ & $1199_{\pm 25}$ \\
            \bottomrule
        \end{tabular}%
        \hspace{2em}%
        \begin{tabular}{ccccccc @{}}
            \toprule
            $\alpha_0$ & $\beta_0$ & $\rho_0$ & $\sigma_0$ & $t$ & $s$ & Iter. \\
            \midrule
            $0.1$ & $0.1$ & $10$ & $10^{-4}$ & $0.05$ & $0.2$ & $1171_{\pm 52}$ \\
            \midrule
            $0.1$ & $0.1$ & $10$ & \cellcolor{Gray}{$\mathbf{10^{-5}}$} & $0.05$ & $0.2$ & $1171_{\pm 52}$ \\
            $0.1$ & $0.1$ & $10$ & \cellcolor{Gray}{$\mathbf{10^{-3}}$} & $0.05$ & $0.2$ & $1172_{\pm 53}$ \\
            $0.1$ & $0.1$ & $10$ & $10^{-4}$ & \cellcolor{Gray}{$\mathbf{0.04}$} & $0.2$ & $713_{\pm 26}$ \\
            $0.1$ & $0.1$ & $10$ & $10^{-4}$ & \cellcolor{Gray}{$\mathbf{0.06}$} & $0.2$ & $2224_{\pm 95}$ \\
            $0.1$ & $0.1$ & $10$ & $10^{-4}$ & $0.05$ & \cellcolor{Gray}{$\mathbf{0.15}$} & $775_{\pm 42}$ \\
            $0.1$ & $0.1$ & $10$ & $10^{-4}$ & $0.05$ & \cellcolor{Gray}{$\mathbf{0.25}$} & $1985_{\pm 89}$ \\
            \bottomrule
        \end{tabular}%
    }
\end{table}

\subsection{Fairness-aware Classification}

We next evaluate SPACO on fairness-aware classification, where the goal is to learn an accurate predictor while mitigating bias with respect to a sensitive attribute.
Classical adversarial debiasing~\cite{zhang2018mitigating} trains an auxiliary adversary $\varphi$ to infer sensitive information from the predictor $\theta$, and updates the predictor through a gradient aggregation heuristic to reduce the dependence of predictions on this information.
\cite{chen2025secondorder} recently recast this task as an unconstrained minimax problem.
We augment their minimax formulation with an explicit competency constraint and solve
\begin{equation}\label{eq:fairness-minimax-con}
    \min_{\theta}  \max_{\varphi}\left\{ \mathcal{L}_{\text{pred}}(\theta) - \beta \mathcal{L}_{\text{adv}} (\theta,\varphi) \mid \mathcal{L}_{\text{adv}}(\theta, \varphi) \le \kappa \right\}.
\end{equation}
Here, $\mathcal{L}_{\text{pred}}$ and $\mathcal{L}_{\text{adv}}$ denote the losses of the predictor and the adversary, respectively.
The constraint $\mathcal{L}_{\text{adv}} \le \kappa$ enforces a minimum competency level $\kappa$ on the adversary, preventing the predictor from being over-optimized against a weak opponent.
We test this formulation in two regimes: a controlled convex logistic model and a large-scale nonconvex deep model.
Performance is assessed by predictive accuracy (Acc), demographic parity difference (DPD)~\cite{feldman2015demographic}, and equalized odds difference (EOD)~\cite{hardt2016eqodd}.
Here DPD measures disparity in positive prediction rates, while EOD measures disparity in true-positive and false-positive rates across sensitive groups.

\textbf{Convex regime.}
We follow~\cite{chen2025secondorder} and use the UCI Adult dataset~\cite{changlibsvm2011} to predict annual income with gender as the sensitive attribute.
We solve a convex-concave $\ell_2$-regularized logistic regression game with $\beta=0.5$ and regularization parameters $\lambda=\gamma=10^{-4}$.
We compare SPACO against the canonical first-order baseline, ExtraGradient (EG)~\cite{korpelevich1976extragradient}, and an advanced Newton-type solver LEN~\cite{chen2025secondorder}.
All methods are run for $100$ seconds with stepsizes $0.1$ for both variables.
For SPACO, we set $\kappa=0.65$, $\rho_0=50$, and exponents $(t,s)=(0.01,0.04)$.

\textbf{Nonconvex regime.}
We use the CelebA dataset~\cite{liu2015celeba} to predict \emph{Blond Hair} while mitigating bias with respect to \emph{Gender}.
The model consists of a pretrained ResNet-18 predictor and an MLP adversary, both trained for $20$ epochs with a learning rate of $0.002$.
We compare SPACO against vanilla classification (Vanilla) and Adversarial Debiasing~\cite{zhang2018mitigating} (ADVER), in which $\alpha$ controls the strength of the adversarial-gradient aggregation.
For SPACO, we set $\kappa=0.65$, $\beta=4.0$, $\rho_0=20$, $\sigma_0=10^{-6}$, and $(t,s)=(0.01,0.04)$.
Further experimental details are given in Appendix~\ref{app:experiment-details}.

\begin{table}[t]
    \centering
    \caption{Comparison of fairness-utility trade-offs. Values are means over three independent runs, with subscripts denoting standard deviations. For the nonconvex regime, we report the checkpoint with the smallest DPD subject to the desired prediction level $\mathrm{Acc}\ge 90\%$.}%
    \label{tab:fairness_main}
    \begin{subtable}[t]{0.49\textwidth}
        \centering
        \caption{Convex Regime (Adult)}
        \setlength{\tabcolsep}{0.5pt}
        \begin{tabular}{lccc}
        \toprule
        \textbf{Method} & {Acc} ($\uparrow$) & {DPD} ($\downarrow$) & {EOD} ($\downarrow$) \\
        \midrule
        EG & $84.3_{\pm 0.0}$ & $0.191_{\pm 0.001}$ & $0.282_{\pm 0.008}$ \\
        LEN & $\mathbf{84.9_{\pm 0.0}}$ & $0.184_{\pm 0.000}$ & $0.212_{\pm 0.000}$ \\
        \cellcolor{Gray}\textbf{SPACO} & \cellcolor{Gray}{$84.8_{\pm 0.0}$} & \cellcolor{Gray}{$\mathbf{0.177_{\pm 0.001}}$} & \cellcolor{Gray}{$\mathbf{0.195_{\pm 0.002}}$} \\
        \bottomrule
        \end{tabular}
    \end{subtable}
    \hfill
    \begin{subtable}[t]{0.49\textwidth}
        \centering
        \caption{Nonconvex Regime (CelebA)}
        \setlength{\tabcolsep}{0.5pt}
        \begin{tabular}{lccc}
        \toprule
        \textbf{Method} & {Acc} ($\uparrow$) & {DPD} ($\downarrow$) & {EOD} ($\downarrow$) \\
        \midrule
        Vanilla & $\mathbf{96.0_{\pm 0.0}}$ & $0.188_{\pm 0.003}$ & $0.506_{\pm 0.016}$ \\
        ADVER & $91.3_{\pm 1.5}$ & $0.080_{\pm 0.023}$ & $0.273_{\pm 0.021}$ \\
        \cellcolor{Gray}\textbf{SPACO} & \cellcolor{Gray}{$90.8_{\pm 0.3}$} & \cellcolor{Gray}{$\mathbf{0.057_{\pm 0.010}}$} & \cellcolor{Gray}{$\mathbf{0.076_{\pm 0.020}}$} \\
        \bottomrule
        \end{tabular}
    \end{subtable}
\end{table}

\begin{figure}[tb]
    \centering
    \begin{subfigure}[t]{0.45\linewidth}
        \centering
        {
            \fontsize{9}{6}\selectfont
            \hfill
            \legendItemm[0.4]{fariness-c1}{$0.1$}\hfill
            \legendItemm[0.4]{fariness-c2}{$0.3$}\hfill
            \legendItemm[0.4]{fariness-c3}{$0.5$}\hfill
            \legendItemm[0.4]{fariness-c4}{$0.7$}\hfill
            \legendItemm[0.4]{fariness-c5}{$0.9$}\hfill
        }
        
        \includegraphics[width=.9\linewidth]{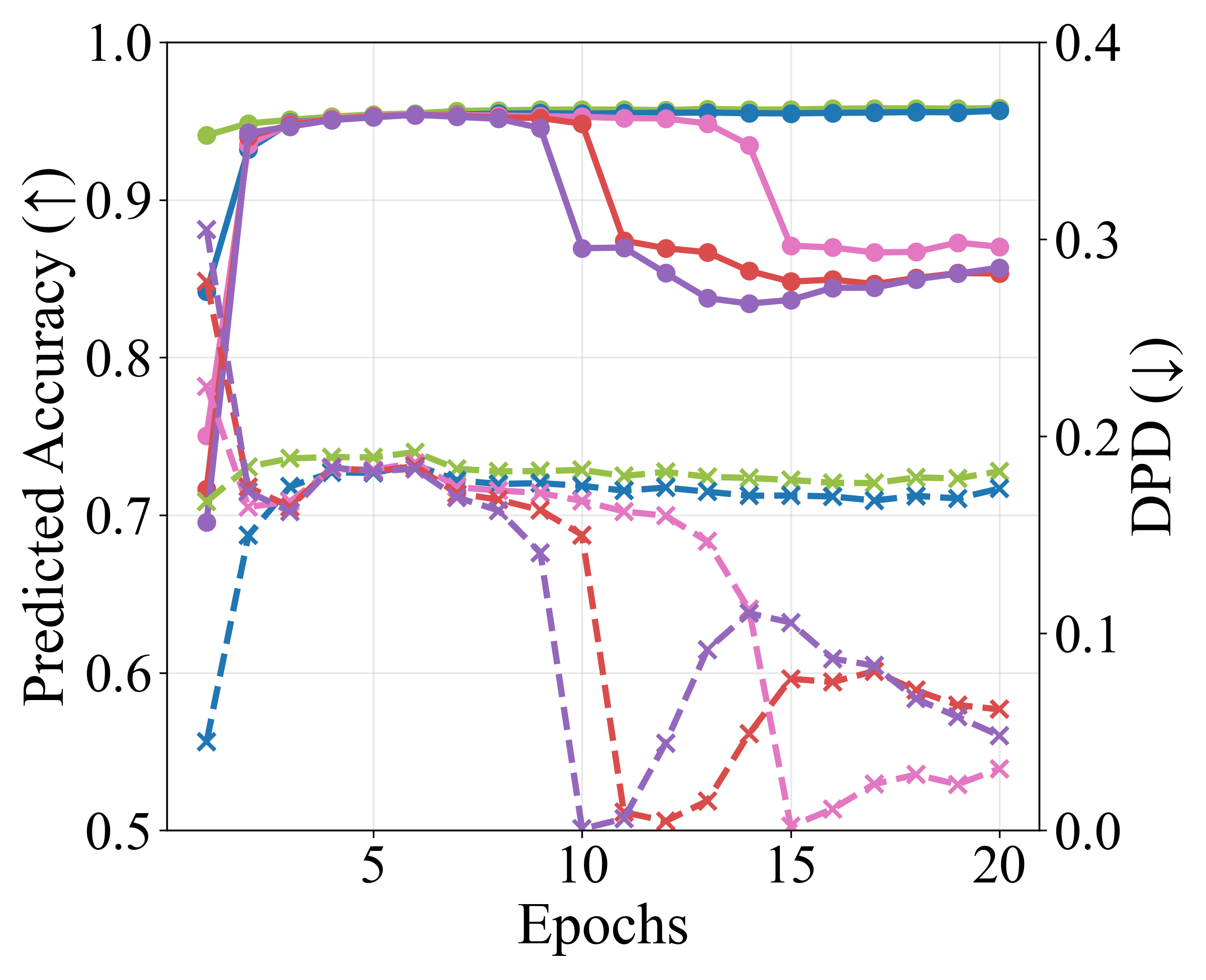}
        \caption{ADVER with different hyperparameter $\alpha$}\label{fig:fairness-deep-adv}
    \end{subfigure}
    \hspace*{1em}
    \begin{subfigure}[t]{0.45\linewidth}
        \centering
        {
            \fontsize{9}{6}\selectfont
            \hspace{1em}\hfill
            \legendItemm[0.4]{fariness-c1}{$2.0$}\hfill
            \legendItemm[0.4]{fariness-c2}{$3.0$}\hfill
            \legendItemm[0.4]{fariness-c3}{$4.0$}\hfill
            \legendItemm[0.4]{fariness-c4}{$5.0$}\hfill\hspace{1em}
        }
        
        \includegraphics[width=.9\linewidth]{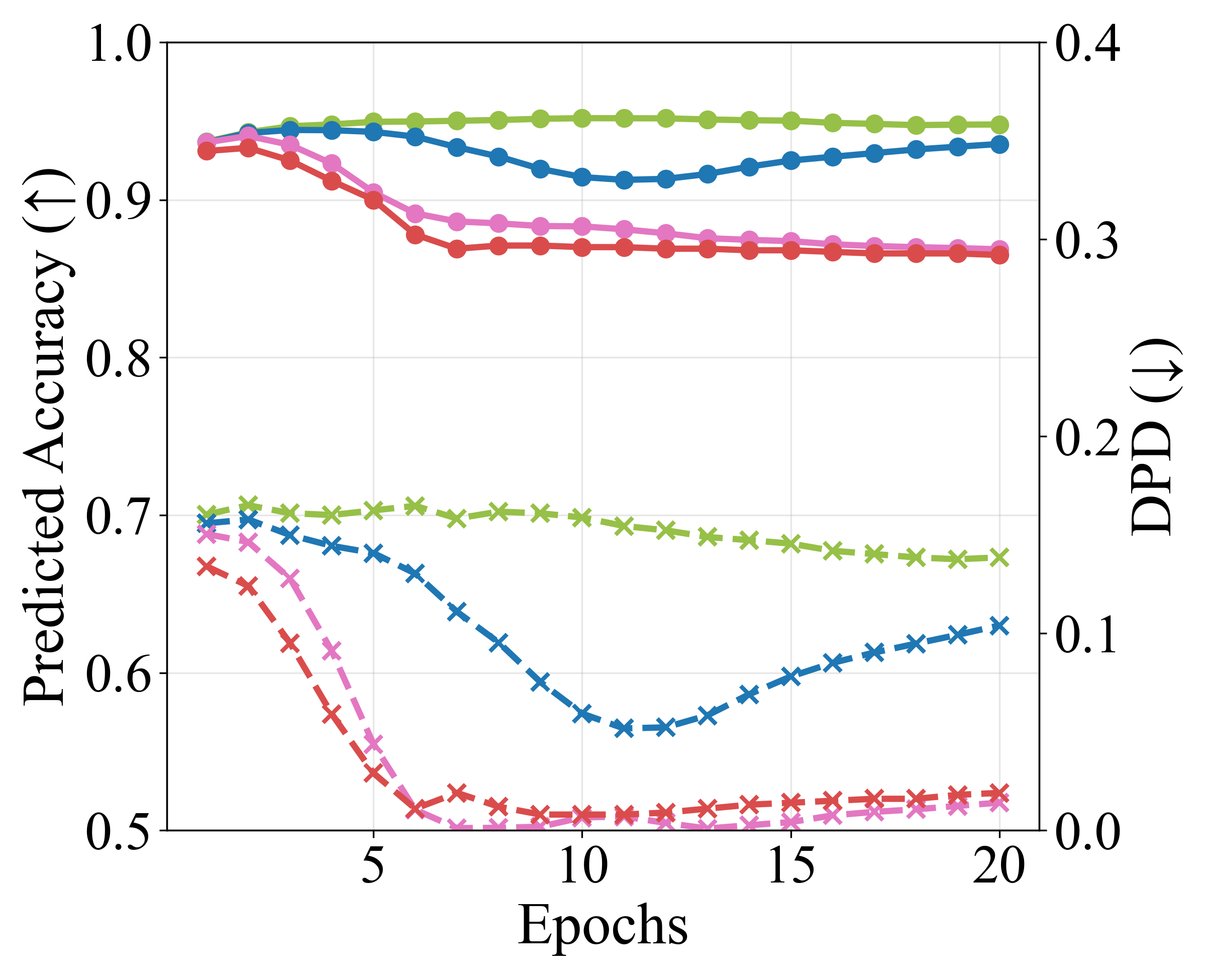}
        \caption{SPACO with different hyperparameter $\beta$}\label{fig:fairness-spaco-beta}
    \end{subfigure}
    \caption{
        Sensitivity plots in the nonconvex regime for deep fairness learning on CelebA. Solid lines ($\bullet$) and dashed lines ($\times$) denote predictive accuracy and DPD, respectively.
    }\label{fig:fairness-deep}
\end{figure}

Table~\ref{tab:fairness_main} reports the quantitative fairness-utility trade-off in the two regimes.
In the controlled convex regime, SPACO attains essentially the same predictive accuracy as LEN while reducing both DPD and EOD, indicating that the constrained minimax formulation improves the fairness side of the trade-off without an apparent loss in utility.
In the large-scale nonconvex experiment, SPACO gives the best fairness metrics among the compared methods while maintaining the desired prediction level.
In addition, Figure~\ref{fig:fairness-deep} shows that ADVER can exhibit abrupt changes in both accuracy and DPD during training, whereas SPACO maintains smoother accuracy--fairness trajectories across a broad range of $\beta$ values.
These trajectories suggest a practical benefit of the constrained minimax formulation: the competency constraint may stabilize the adversarial debiasing process by keeping the adversary informative during training.



\subsection{Generative Adversarial Networks}
We finally evaluate SPACO on constrained generative adversarial training, where the coupled constraint is used to control the interaction between the generator and the discriminator.
Following GAN-C~\cite{chao2021constrained}, we consider
\begin{equation*}
    \begin{aligned}
        \min_{G}\max_{D}\;&
         \mathbb{E}_{x_r,z}\! \left[\log D(x_r)+\log\big(1-D(G(z))\big)\right], \\
        \text{s.t.}\;& \mathbb{E}_{x_r,z}\big[\log D(x_r)-\log D(G(z))\big]^2 \leq \epsilon.
    \end{aligned}
\end{equation*}
The coupled constraint controls the discrepancy between discriminator outputs on real and generated samples and is intended to prevent the discriminator from dominating the generator too early.
The original GAN-C baseline relaxes this hard constraint into a fixed quadratic penalty term in the discriminator objective.
We compare SPACO with the unconstrained GAN baseline and the original GAN-C solver~\cite{chao2021constrained}.

\begin{table}[t]
\centering
    \caption{Quantitative results for GAN training on the CIFAR-10 and AFHQ-v2 datasets. Values are means over three independent runs, with subscripts denoting standard deviations.}%
    \label{tab:gan}
    \begin{tabular}{lcccc}
        \toprule
        \multirow{2}{*}{\textbf{Method}} & \multicolumn{2}{c}{CIFAR-10} & \multicolumn{2}{c}{AFHQ-v2} \\
        \cmidrule(lr){2-3} \cmidrule(lr){4-5}
            & FID ($\downarrow$) & IS ($\uparrow$) & FID ($\downarrow$) & IS ($\uparrow$) \\
        \midrule
        GAN & $35.33_{\pm 2.58}$ & $6.23_{\pm 0.24}$ & $28.50_{\pm 1.52}$ & $6.47_{\pm 0.14}$ \\
        GAN-C & $21.19_{\pm 1.34}$ & $7.39_{\pm 0.07}$ & $26.46_{\pm 0.74}$ & $6.62_{\pm 0.15}$ \\
        \rowcolor{Gray}
        \textbf{SPACO} & $\mathbf{18.78_{\pm 1.41}}$ & $\mathbf{7.73_{\pm 0.15}}$ & $\mathbf{24.44_{\pm 0.95}}$ & $\mathbf{6.89_{\pm 0.09}}$ \\
        \bottomrule
    \end{tabular}
\end{table}

\begin{figure}[t]
    \centering
        \includegraphics[width=.48\linewidth]{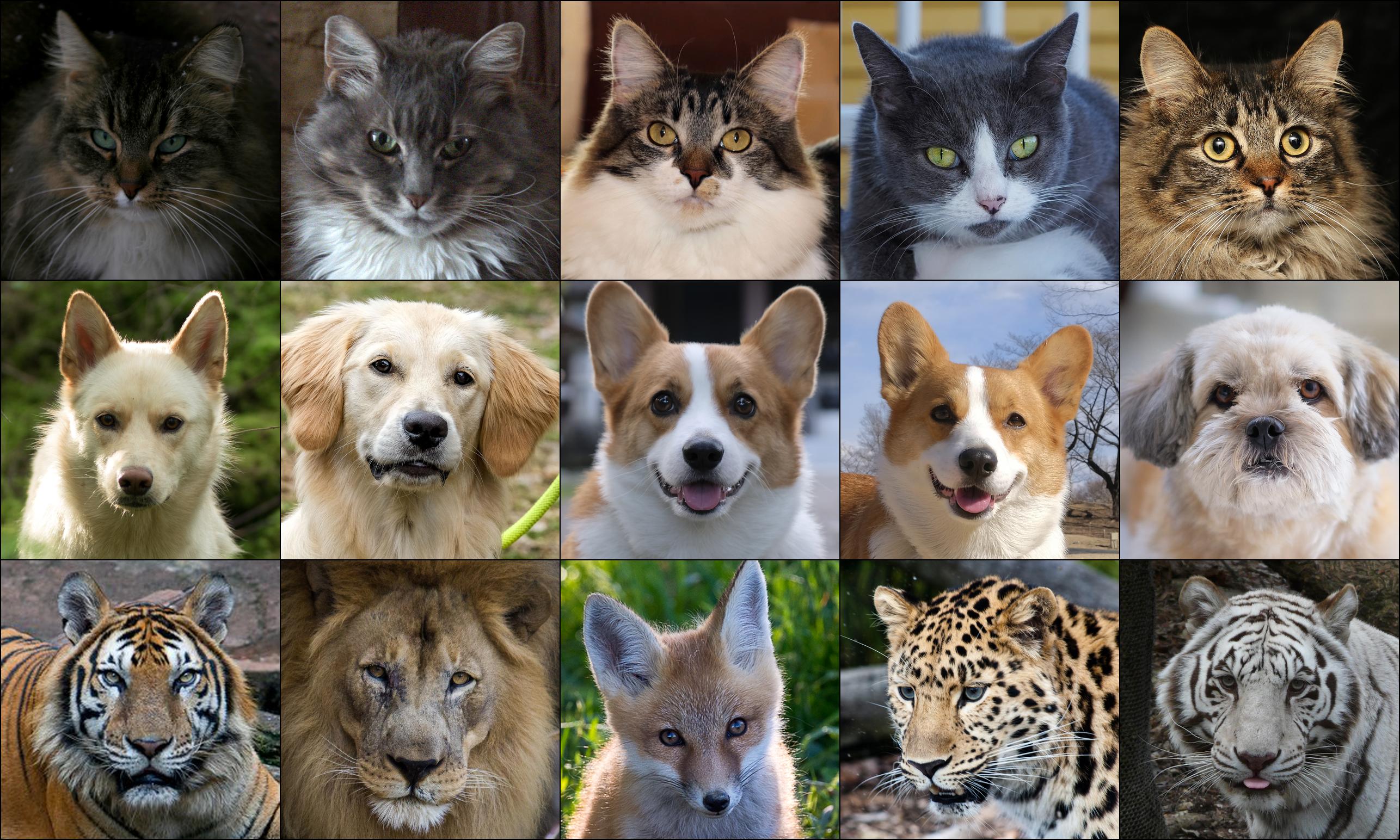}
    \hfill
    \includegraphics[width=.48\linewidth]{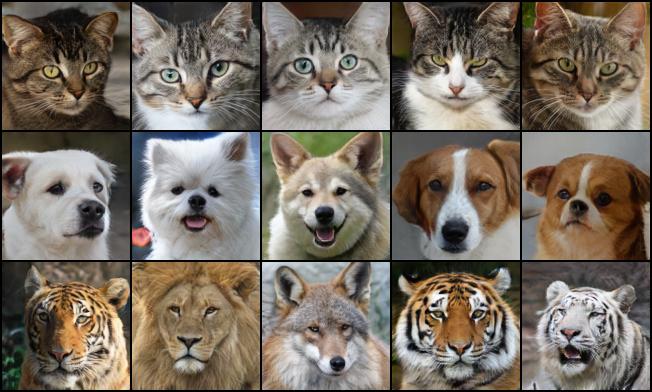}
    \caption{
        Representative AFHQ-v2 samples are shown, with real images on the left and samples generated by SPACO on the right.
    }\label{fig:gan-inference}
\end{figure}

We use CIFAR-10~\cite{krizhevsky2009learning} and AFHQ-v2~\cite{choi2020starganv2}.
All methods employ the same Spectral Normalization GAN architecture~\cite{miyato2018spectral}, the Adam optimizer with learning rate $2\times 10^{-4}$ and momentum $(\beta_1,\beta_2)=(0.0,0.9)$, and $100$ training epochs.
Following GAN-C~\cite{chao2021constrained}, both constrained methods keep learning rates fixed and apply the discrepancy constraint term only when updating discriminator $D$.
This choice keeps the objective for generator $G$ focused on improving image quality and fooling $D$, instead of adding discrepancy reduction to the generator objective.
For the constrained methods, we set $\epsilon=0$ and use penalty $\rho=5$ for GAN-C, while SPACO uses initial penalty $\rho_0=5$ with exponents $(t,s)=(0.09,0.28)$.
Performance is assessed by Fr\'{e}chet Inception Distance (FID)~\cite{heusel2017gans} and Inception Score (IS)~\cite{salimans2016improved}, averaged over three runs.

Table~\ref{tab:gan} reports the quantitative results on both datasets.
The improvement of GAN-C over the unconstrained GAN baseline supports the role of the discriminator-discrepancy constraint in stabilizing training.
This control helps prevent the discriminator from overwhelming the generator while preserving a useful training signal.
SPACO further improves over GAN-C, suggesting that the proposed constrained minimax update can be a practical alternative to the fixed-penalty implementation.
This distinction is consistent with the dynamic penalty treatment in SPACO:\ a fixed penalty may either under-enforce the discriminator-discrepancy control or over-penalize useful discriminator learning, whereas the adaptive constrained formulation can preserve the stabilizing role of the constraint while reducing the bias introduced by a fixed relaxation.
Finally, Figure~\ref{fig:gan-inference} gives a qualitative view on AFHQ-v2 and shows that SPACO generates diverse samples with realistic visual details.

\newpage

\appendix

\section{Additional Experimental Details}\label{app:experiment-details}

\paragraph{Synthetic examples.}%
Because the exact stationarity gap is unavailable under stochastic gradients, we follow~\cite{ghadimi2013stochastic} and tune all methods by minimizing the norm of the same large-batch estimate of the generalized gradient, using a fixed batch of $T=100$ samples.
For each configuration, we run $K=10^4$ iterations and compute the tuning score by averaging this norm every $100$ iterations over the last $1000$ steps.

Based on this protocol, we tune each method on the two stochastic synthetic examples over the following hyperparameter grids.
MGD uses primal stepsizes $\alpha,\beta\in\{0.1,0.01,0.001\}$, dual stepsize $\gamma\in\{1,0.1,0.01\}$, and inner steps $L\in\{1,5,10\}$.
GBAL uses stepsizes $\alpha,\beta\in\{0.1,0.01,0.001\}$ and penalty $\rho\in\{0.1,1,10\}$.
MMPen uses $\alpha_{\mathrm{pen}}\in\{20,100,500\}$ and $\eta\in\{0.5,1,2\}$.
GDA-FP uses penalty $\rho\in\{5,10,20,50\}$ and stepsizes $\alpha,\beta\in\{0.1,0.01,0.001\}$.
SPACO uses stepsizes $\alpha_0,\beta_0\in\{0.1,0.01,0.001\}$, penalty $\rho_0\in\{5,10,20,50\}$,  and regularization $\sigma_0\in\{10^{-3},10^{-4},10^{-5}\}$, with fixed exponents $(t,s)=(0.05,0.2)$.
Table~\ref{tab:hyperparams} lists the best configurations.
For deterministic Example~\ref{exm}, each method uses the hyperparameters selected for the stochastic nonlinear case~\eqref{eq:nonlinear-toy}.

\begin{table}[ht]
    \centering
    \caption{Best hyperparameters on the stochastic synthetic examples under the protocol.}%
    \label{tab:hyperparams}
    {
    \small\setlength{\tabcolsep}{10pt}
    \begin{tabular}{@{}lll@{}}
        \toprule
        \textbf{Algorithm} & \textbf{Nonlinear Constraint Case~\eqref{eq:nonlinear-toy}} & \textbf{Linear Constraint Case~\eqref{eq:linear-toy}} \\
        \midrule
        \textbf{MGD} & $\alpha=0.01,\beta=0.01,\gamma=0.1,L=1$ & $\alpha=0.001,\beta=0.1,\gamma=0.1,L=1$ \\
        \textbf{GBAL} & $\alpha=0.001,\beta=0.1,\rho=1.0$ & $\alpha=0.001,\beta=0.01,\rho=0.1$ \\
        \textbf{MMPen} & $\alpha_{\mathrm{pen}}=20,\eta=1.0$ & $\alpha_{\mathrm{pen}}=100,\eta=1.0$ \\
        \textbf{GDA-FP} & $\rho_0=20,\alpha_0=0.001,\beta_0=0.01$ & $\rho_0=20,\alpha_0=0.001,\beta_0=0.01$ \\
        \textbf{SPACO} & $\rho_0=10,\alpha_0=0.1,\beta_0=0.1,\sigma_0=10^{-4}$ & $\rho_0=20,\alpha_0=0.01,\beta_0=0.1,\sigma_0=10^{-4}$ \\
        \bottomrule
    \end{tabular}
    }
\end{table}

\paragraph{Fairness-aware classification.}%
In the convex Adult experiment, SPACO uses batch size $512$, and LEN uses the recommended lazy Hessian frequency $m=10$.
In the nonconvex CelebA experiment, images are center-cropped to $178\times178$ and resized to $224\times224$ before training, and the batch size is $256$.
For Adversarial Debiasing, the adversarial strength is tuned over $\alpha\in\{0.1,0.3,0.5,0.7,0.9\}$.
For Table~\ref{tab:fairness_main}, the reported nonconvex checkpoint is selected as the one with the smallest DPD subject to $\mathrm{Acc}\ge 90\%$; EOD is then reported as a complementary fairness metric.

\paragraph{Generative adversarial networks.}%
For the GAN experiments, CIFAR-10 images are resized to $64\times64$, and AFHQ-v2 images are resized to $128\times128$.
During training, the batch size is $128$ for CIFAR-10 and $64$ for AFHQ-v2, with latent dimensions $128$ and $256$, respectively.
Both the generator and discriminator use base channel width $64$.
Evaluation uses the exponential moving average of the generator parameters with decay rate $0.999$ when computing FID and IS scores on generated samples.

\section{Proof for Section \ref{sec:preliminaries}}
\subsection{Proof for Lemma \ref{isc and lsc}}
\begin{proof}
    Fix $\bar{x}\in X$. If $\varphi(\bar{x})=-\infty$, then the desired
    lower-semicontinuity inequality holds trivially. We therefore assume that
    $\varphi(\bar{x})>-\infty$. Let $\{x_k\}\subset X$ be any sequence such
    that $x_k\to\bar{x}$.
    By Assumption~\ref{assum1}, $\Gamma(\bar{x})$ is nonempty and compact.
    Since $f$ is continuous, there exists $\bar{y}\in\Gamma(\bar{x})$ such that
    $f(\bar{x},\bar{y})=\varphi(\bar{x})$.
    The inner semicontinuity of $\Gamma$ at $\bar{x}$ implies that there
    exists a sequence $y_k\in\Gamma(x_k)$ such that $y_k\to\bar{y}$. Hence,
    for every $k$,
    \[
        \varphi(x_k)=\max_{y\in\Gamma(x_k)} f(x_k,y)\ge f(x_k,y_k).
    \]
    Taking the lower limit and using the continuity of $f$, we obtain
    \[
        \liminf_{k\to\infty}\varphi(x_k)
        \ge \lim_{k\to\infty} f(x_k,y_k)
        = f(\bar{x},\bar{y})
        = \varphi(\bar{x}).
    \]
    Since the sequence $\{x_k\}$ was arbitrary, $\varphi$ is lower
    semicontinuous on $X$.
\end{proof}

\subsection{Proof for Lemma \ref{slaterisc}}
\begin{proof}
Fix any $\bar{y}\in \Gamma(\bar{x})$, and let $\{x_k\}\subset X$ be any
sequence such that $x_k\to \bar{x}$. We construct a sequence
$y_k\in \Gamma(x_k)$ with $y_k\to \bar{y}$.

By the Slater condition, there exists $\tilde{y}\in\operatorname{ri}(Y)$ such
that $c(\bar{x},\tilde{y})<0$ componentwise. Since there are finitely many
constraints, we may choose $\eta>0$ such that
$c_i(\bar{x},\tilde{y})\le -2\eta$ for all $i=1,\ldots,p$. By continuity,
$c_i(x_k,\tilde{y})\le -\eta$ for all $i=1,\ldots,p$ and all sufficiently
large $k$.

Since $\bar{y}\in \Gamma(\bar{x})$, we have
$c_i(\bar{x},\bar{y})\le 0$ for all $i$. Define
$\varepsilon_k:=\max_{1\le i\le p}[c_i(x_k,\bar{y})]_+$. The continuity of
each $c_i(\cdot,\bar{y})$ and the convergence $x_k\to\bar{x}$ imply that
$\varepsilon_k\to0$. For all sufficiently large $k$, set
$t_k:=\varepsilon_k/(\varepsilon_k+\eta)\in[0,1)$ and
$y_k:=(1-t_k)\bar{y}+t_k\tilde{y}$.
Because $Y$ is convex and $\bar{y},\tilde{y}\in Y$, we have $y_k\in Y$.
Moreover, $t_k\to0$, and hence $y_k\to\bar{y}$.

It remains to verify feasibility. For each $i=1,\ldots,p$, the convexity of
$c_i(x_k,\cdot)$ on $Y$ yields
\[
\begin{aligned}
c_i(x_k,y_k)
&\le (1-t_k)c_i(x_k,\bar{y})+t_k c_i(x_k,\tilde{y}) \le (1-t_k)\varepsilon_k-t_k\eta
 =0 .
\end{aligned}
\]
Thus $y_k\in\Gamma(x_k)$ for all sufficiently large $k$. For the finitely
many remaining indices, choose any $y_k\in\Gamma(x_k)$; changing finitely many terms does not affect the
convergence $y_k\to\bar{y}$. Hence, for the arbitrary sequence
$x_k\to\bar{x}$ and arbitrary $\bar{y}\in\Gamma(\bar{x})$, we have constructed
$y_k\in\Gamma(x_k)$ with $y_k\to\bar{y}$. This proves the claimed inner
semicontinuity.
\end{proof}

\subsection{Proof for Proposition \ref{thm:slater-gplcq}}
\begin{proof}
Fix any $\beta>0$ and write $p(x,y)=\frac12\|[c(x,y)]_+\|^2$. Suppose, to
the contrary, that the projected residual bound in Definition~\ref{def:gplcq}
fails locally at $(\bar x,\bar y)$ for this choice of $\beta$. Then, for each
$k$, there exists
$(x_k,y_k)\in (B_{1/k}(\bar x)\cap X)\times(B_{1/k}(\bar y)\cap Y)$ such that,
with
\[
 r_k:=\left\|
\begin{bmatrix}x_k\\ y_k\end{bmatrix}
-\P_{X\times Y}\left(
\begin{bmatrix}x_k\\ y_k\end{bmatrix}
+\beta\begin{bmatrix}\nabla_xp(x_k,y_k)\\-\nabla_yp(x_k,y_k)\end{bmatrix} 
 \right)\right\| \rightarrow 0,
\]
we have $\sqrt{p(x_k,y_k)}>(k/\beta)r_k$. Hence
$r_k/\sqrt{p(x_k,y_k)}\to0$. 

Since
$\sqrt{p(x_k,y_k)}=\|[c(x_k,y_k)]_+\|/\sqrt{2}$, it follows that, with
$s_k:=\|[c(x_k,y_k)]_+\|$, we have $s_k>0$ and $r_k/s_k\to0$. Define
$\lambda_k:=[c(x_k,y_k)]_+/s_k$. Passing to a subsequence if necessary,
$\lambda_k\to\lambda$, where $\lambda\in\mathbb R_+^p$ and $\|\lambda\|=1$.
Moreover, if $c_i(\bar x,\bar y)<0$, then $(\lambda_k)_i=0$ for all
sufficiently large $k$, and hence $\lambda_i=0$.

Let $(x_k^+,y_k^+):=\P_{X\times Y}\left(
\begin{bmatrix}x_k\\ y_k\end{bmatrix}
+\beta\begin{bmatrix}\nabla_xp(x_k,y_k)\\-\nabla_yp(x_k,y_k)\end{bmatrix}
\right)$ denote the projection point in the definition of $r_k$. Since
$(\bar x,\bar y)$ is feasible, $s_k\to0$, and therefore $r_k\to0$. Thus
$(x_k^+,y_k^+)\to (\bar{x},\bar{y})$. The projection optimality conditions give
\[
 x_k+\beta\nabla_xp(x_k,y_k)-x_k^+\in\mathcal N_X(x_k^+),\quad
 y_k-\beta\nabla_yp(x_k,y_k)-y_k^+\in\mathcal N_Y(y_k^+).
\]
Using
$    \nabla p(x_k,y_k)
    =s_k\sum_{i=1}^p(\lambda_k)_i\nabla c_i(x_k,y_k),$
dividing the two inclusions by $\beta s_k$, and using $r_k/s_k\to0$, we may
pass to the limit. By the outer semicontinuity of normal cones to closed
convex sets,
 $\sum_{i\in\mathcal A(\bar x,\bar y)}\lambda_i\nabla_xc_i(\bar x,\bar y)\in\mathcal N_X(\bar x),\;
 -\sum_{i\in\mathcal A(\bar x,\bar y)}\lambda_i\nabla_yc_i(\bar x,\bar y)\in\mathcal N_Y(\bar y),$
where $\mathcal A(\bar x,\bar y)=\{i:c_i(\bar x,\bar y)=0\}$.

On the other hand, let $\tilde y\in\operatorname{ri}(Y)$ be the Slater point,
so that $c_i(\bar x,\tilde y)<0$ for all $i$. By the convexity of
$c_i(\bar x,\cdot)$, for every $i\in\mathcal A(\bar x,\bar y)$,
$ \langle \nabla_yc_i(\bar x,\bar y),\tilde y-\bar y\rangle
 \le c_i(\bar x,\tilde y)-c_i(\bar x,\bar y)<0.$

Since $\lambda\ge0$, $\|\lambda\|=1$, and $\lambda_i=0$ for every inactive
constraint, this implies
\[
\left\langle -\sum_{i\in\mathcal A(\bar x,\bar y)}\lambda_i\nabla_yc_i(\bar x,\bar y),\tilde y-\bar y\right\rangle>0,
\]
which contradicts the normal-cone inequality because $\tilde y\in Y$. Hence
the residual bound in Definition~\ref{def:gplcq} must hold locally
for the fixed $\beta$. Therefore the GP\L{}CQ holds at $(\bar x,\bar y)$.
\end{proof}

\section{Proof for Section \ref{approximaiton}}\label{proof for approximation}
In this section, we provide detailed proofs of the results on the smooth
approximation developed in Section~\ref{approximaiton}. To simplify notation,
throughout this section we write $
\LL_k(x,y):=\LL_{\rho_k,\sigma_k}(x,y)$, $\varphi_k(x):=\varphi_{\rho_k,\sigma_k}(x)$, 
$y_k^*(x):=y_{\rho_k,\sigma_k}^*(x)$.

\subsection{Proof for Lemma \ref{limsup} }
Before presenting the proof for Lemma \ref{limsup}, we establish the feasibility of the limiting point by the following lemma.

\begin{lemma}\label{feasibility}
Let $\rho_k\to\infty$ and $\sigma_k\to 0$ as $k\to\infty$. Then, for any
sequence $\{x_k\}\subset X$ such that $x_k\to\bar{x}\in X$, we have
\[
    \lim_{k\to\infty}\left\|[c(x_k,y_k^*(x_k))]_+\right\|=0.
\]
Consequently, any accumulation point $(\bar x,\bar y)$ of
$\{(x_k,y_k^*(x_k))\}$ is feasible, i.e., $\bar y\in\Gamma(\bar x)$.
\end{lemma}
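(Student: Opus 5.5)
The plan is to compare the value of the inner maximizer $y_k^*(x_k)$ with the value at a feasible point, and then read off a bound on the penalty term. Fix any feasible comparison point $\hat y_k\in\Gamma(x_k)$, which exists by Assumption~\ref{assum1}(4). Because $y_k^*(x_k)$ maximizes $\LL_k(x_k,\cdot)$ over $Y$ and $\hat y_k\in Y$, we have
\[
f(x_k,y_k^*(x_k))-\frac{\rho_k}{2}\big\|[c(x_k,y_k^*(x_k))]_+\big\|^2-\frac{\sigma_k}{2}\|y_k^*(x_k)\|^2
\;\ge\; f(x_k,\hat y_k)-\frac{\sigma_k}{2}\|\hat y_k\|^2 ,
\]
since $[c(x_k,\hat y_k)]_+=0$.

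Rearranging this inequality gives
\[
\frac{\rho_k}{2}\big\|[c(x_k,y_k^*(x_k))]_+\big\|^2\le f(x_k,y_k^*(x_k))-f(x_k,\hat y_k)+\frac{\sigma_k}{2}\bigl(\|\hat y_k\|^2-\|y_k^*(x_k)\|^2\bigr).
\]
To bound the right-hand side uniformly in $k$, use that $\{x_k\}$ converges. Hence it lies in a compact set $K\subset X$ (its closure together with $\bar x$), and $Y$ is compact. Since $f$ is continuous, $|f|\le M_K$ on $K\times Y$. The term $\frac{\sigma_k}{2}\|\hat y_k\|^2$ is at most $\frac{\sigma_k}{2}D_y^2$, and the negative term involving $\|y_k^*(x_k)\|^2$ can simply be dropped. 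The right-hand side is therefore at most $2M_K+\frac{\sigma_k}{2}D_y^2$, which is bounded because $\sigma_k\to0$. Note that $D_y$ is finite by compactness of $Y$ alone, so Assumption~\ref{assum:Xcompact} is not needed here.

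Dividing by $\rho_k/2$ gives $\|[c(x_k,y_k^*(x_k))]_+\|^2\le 2(2M_K+\sigma_kD_y^2/2)/\rho_k\to0$, which proves the first claim. For the consequence, let $(\bar x,\bar y)$ be an accumulation point along a subsequence. Then $\bar y\in Y$ because $Y$ is closed. By continuity of $c$ and of $z\mapsto[z]_+$, we get $\|[c(\bar x,\bar y)]_+\|=0$, i.e.\ $c(\bar x,\bar y)\le0$, so $\bar y\in\Gamma(\bar x)$.

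The only mildly delicate point is obtaining a uniform bound on $f$ without assuming $X$ compact. I plan to handle it by restricting to the compact set $K\times Y$ generated by the convergent sequence. No constraint qualification or semicontinuity of $\Gamma$ is needed, because the argument only requires pointwise nonemptiness of $\Gamma(x_k)$.
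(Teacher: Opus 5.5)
Your proof is correct and follows essentially the same argument as the paper: compare the penalized value at $y_k^*(x_k)$ with the value at a feasible point of $Y$, then let $\rho_k\to\infty$ absorb the bounded remainder. The only difference is the comparison point. You take $\hat y_k\in\Gamma(x_k)$, so the penalty term vanishes exactly and you get the explicit bound $\|[c(x_k,y_k^*(x_k))]_+\|=O(\rho_k^{-1/2})$, which is the device the paper itself uses later in the proof of Theorem~\ref{convergethm}. The paper instead fixes one $\bar y\in\Gamma(\bar x)$ and uses continuity of $c$ to get $\|[c(x_k,\bar y)]_+\|\to 0$, so it needs feasibility only at the limit point.
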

\begin{proof}
By Assumption~\ref{assum1}, choose any $\bar y\in\Gamma(\bar x)$. Since
$y_k^*(x_k)$ maximizes $\LL_k(x_k,\cdot)$ over $Y$,
\[
\begin{aligned}
& f(x_k,y_k^*(x_k))
  -\frac{\rho_k}{2}\left\|[c(x_k,y_k^*(x_k))]_+\right\|^2
  -\frac{\sigma_k}{2}\|y_k^*(x_k)\|^2   \\
\ge\;&
  f(x_k,\bar y)
  -\frac{\rho_k}{2}\left\|[c(x_k,\bar y)]_+\right\|^2
  -\frac{\sigma_k}{2}\|\bar y\|^2 .
\end{aligned}
\]
Rearranging gives
\[
\begin{aligned}
\left\|[c(x_k,y_k^*(x_k))]_+\right\|^2
&\le \left\|[c(x_k,\bar y)]_+\right\|^2
 +\frac{2}{\rho_k}\big(f(x_k,y_k^*(x_k))-f(x_k,\bar y)\big) 
 +\frac{\sigma_k}{\rho_k}\big(\|\bar y\|^2-\|y_k^*(x_k)\|^2\big).
\end{aligned}
\]
The sequence $\{x_k\}$ is convergent and $Y$ is compact, so the terms
$f(x_k,y_k^*(x_k))$, $f(x_k,\bar y)$, $\|y_k^*(x_k)\|$, and $\|\bar y\|$ are
bounded. Moreover, $\|[c(x_k,\bar y)]_+\|\to\|[c(\bar x,\bar y)]_+\|=0$,
$1/\rho_k\to0$, and $\sigma_k/\rho_k\to0$. Taking the upper limit in the
preceding inequality yields
\[
    \limsup_{k\to\infty}
    \left\|[c(x_k,y_k^*(x_k))]_+\right\|^2\le 0.
\]
Since the left-hand side is nonnegative, the claimed convergence follows. The
feasibility of any accumulation point follows immediately from the continuity
of $c$.
\end{proof}

\begin{proof}[Proof for Lemma \ref{limsup}]
Fix $x\in X$.
We prove $\limsup_{k\to\infty}\varphi_{k}(x)\le \varphi(x)$ by contradiction.
Suppose that there exists $\delta>0$ such that 
\[
\limsup_{k\to\infty} \varphi_{k}(x)> \varphi(x)+\delta.
\]
Then there exists a subsequence $\{k_i\}$ such that
$\varphi_{k_i}(x)>\varphi(x)+\delta$ for all $i$. Since
$y_{k_i}^*(x)\in Y$ and $Y$ is compact, by passing to a further subsequence if
necessary, we may assume that $y_{k_i}^*(x)\to\hat{y}$ for some
$\hat{y}\in Y$.
Consequently,
\begin{align}\label{limsup:eq1}
\limsup_{i\to\infty}\varphi_{k_i}(x)
&=
\limsup_{i\to\infty}
\Bigl(
f(x,y^*_{k_i}(x))
-\frac{\rho_{k_i}}{2}\left\|[c(x,y^*_{k_i}(x))]_+\right\|^2
-\frac{\sigma_{k_i}}{2}\|y^*_{k_i}(x)\|^2
\Bigr) \notag\\
&\le
\limsup_{i\to\infty} f(x,y^*_{k_i}(x))
= f(x,\hat{y}),
\end{align}
where the inequality follows from the nonnegativity of the penalty terms.
On the other hand, it follows from Lemma~\ref{feasibility} that
$c(x,\hat{y})\le 0$.
Recalling that
$\varphi(x)=\max_{y\in Y,\; c(x,y)\le 0} f(x,y),$
we have $\varphi(x)\ge f(x,\hat{y})$. This contradicts
$\varphi_{k_i}(x)>\varphi(x)+\delta$ for all $i$ and \eqref{limsup:eq1}.
\end{proof}

\subsection{Proof for Lemma \ref{liminf}}

\begin{proof}
For each $k$, let
$y^*(x_k)\in \underset{y\in Y,\;c(x_k,y)\le 0}{\arg\max} f(x_k,y),$
which is well defined because $\Gamma(x_k)$ is nonempty and compact and $f$ is
continuous. By the definition of $\varphi_k(x_k)$ and the optimality of
$y_k^*(x_k)$, we have
\begin{align*}
\varphi_k(x_k)
&=
f(x_k,y_k^*(x_k))
-\frac{\rho_k}{2}\left\|[c(x_k,y_k^*(x_k))]_+\right\|^2
-\frac{\sigma_k}{2}\|y_k^*(x_k)\|^2\\
&\ge
f(x_k,y^*(x_k))
-\frac{\rho_k}{2}\left\|[c(x_k,y^*(x_k))]_+\right\|^2
-\frac{\sigma_k}{2}\|y^*(x_k)\|^2.
\end{align*}
Since $c(x_k,y^*(x_k))\le 0$, the penalty term vanishes, and thus
\begin{align}\label{lemmaliminf:eq1}
    \varphi_k(x_k)
\ge
f(x_k,y^*(x_k))
-\frac{\sigma_k}{2}\|y^*(x_k)\|^2
=
\varphi(x_k)-\frac{\sigma_k}{2}\|y^*(x_k)\|^2.
\end{align}
Since $y^*(x_k)\in Y$ and $Y$ is compact, the sequence $\{\|y^*(x_k)\|\}$ is
bounded. As $\sigma_k\to0$, the regularization term
$\frac{\sigma_k}{2}\|y^*(x_k)\|^2$ converges to zero. Taking the lower limit in
\eqref{lemmaliminf:eq1} yields
$\liminf_{k\to\infty}\varphi_k(x_k)
\ge
\liminf_{k\to\infty}\varphi(x_k).$
Since $\varphi$ is lower semi-continuous,
it follows that
\[
\liminf_{k\to\infty}\varphi(x_k)\ge \varphi(\bar x),
\]
which completes the proof.
\end{proof}

\subsection{Proof for Proposition \ref{ystarconvergence}}
\begin{proof}
We first note that the feasible-set mapping $\Gamma$ is outer semicontinuous.
Indeed, if $(x_j,y_j)\to(\tilde x,\tilde y)$ and $y_j\in\Gamma(x_j)$, then
$[c(x_j,y_j)]_+=0$ for all $j$. By the continuity of $c$, we obtain
$[c(\tilde x,\tilde y)]_+=0$, and hence $\tilde y\in\Gamma(\tilde x)$.
Next, we show that $\varphi$ is upper semicontinuous at $\bar x$. Choose a
subsequence $\{x_{k_i}\}$ such that $ \lim_{i\to\infty}\varphi(x_{k_i})
    =\limsup_{k\to\infty}\varphi(x_k)$.
For each $i$, let $\hat y_i\in\arg\max_{y\in\Gamma(x_{k_i})} f(x_{k_i},y)$,
so that $\varphi(x_{k_i})=f(x_{k_i},\hat y_i)$. Since $Y$ is compact, after
passing to a further subsequence if necessary, we may assume that
$\hat y_i\to\hat y$ for some
$\hat y\in Y$. By the outer semicontinuity of $\Gamma$, we have
$\hat y\in\Gamma(\bar x)$. Therefore, by the continuity of $f$,
\[
\limsup_{k\to\infty}\varphi(x_k)
=\lim_{i\to\infty} f(x_{k_i},\hat y_i)
=f(\bar x,\hat y)
\le \varphi(\bar x).
\]
Together with the assumed lower semicontinuity of $\varphi$, this implies
$\varphi(x_k)\to\varphi(\bar x)$.

Now let $\bar y$ be any accumulation point of $\{y_k^*(x_k)\}$. Passing to a
subsequence if necessary, assume that $y_k^*(x_k)\to\bar y$. Lemma
\ref{feasibility} gives $\bar y\in\Gamma(\bar x)$. For each $k$, choose
$y^*(x_k)\in\arg\max_{y\in\Gamma(x_k)} f(x_k,y)$. Since $y_k^*(x_k)$ maximizes
$\LL_k(x_k,\cdot)$ over $Y$ and $y^*(x_k)$ is feasible for the original inner
problem, we have
\[
f(x_k,y_k^*(x_k))
\ge \varphi_k(x_k)  \ge f(x_k,y^*(x_k))
   -\frac{\sigma_k}{2}\|y^*(x_k)\|^2
 = \varphi(x_k)-\frac{\sigma_k}{2}\|y^*(x_k)\|^2 .
\]
Because $Y$ is compact and $\sigma_k\to0$, the last term converges to zero.
Taking limits along the subsequence yields
    $f(\bar x,\bar y)
    =\lim_{k\to\infty}f(x_k,y_k^*(x_k))
    \ge \lim_{k\to\infty}\varphi(x_k)
    =\varphi(\bar x).$
Since $\bar y\in\Gamma(\bar x)$, we also have
$f(\bar x,\bar y)\le\varphi(\bar x)$. Thus
$\bar y\in\arg\max_{y\in\Gamma(\bar x)} f(\bar x,y)$.
\end{proof}

\subsection{Proof for Theorem \ref{thm:local-to-type1}}
\begin{proof}
Fix any $r\in(0,\delta)$ and set
$X_r:=X\cap B_r(x^*)$. Then $X_r$ is compact and convex, and
$x^*$ is a global minimizer of $\varphi$ on $X_r$. For each $k$, define
$m_k:=\min_{x\in X_r}\varphi_k(x),
\;
\hat x_k\in\arg\min_{x\in X_r}\varphi_k(x).$
The minimizer $\hat x_k$ exists because $\varphi_k$ is continuous and
$X_r$ is compact.

Pass to a subsequence, relabeled by $k$, such that
$m_k\to\liminf_{\ell\to\infty}m_\ell$. By compactness of $X_r$, after
passing to a further subsequence if necessary, we may assume that
$\hat x_k\to\hat x\in X_r$. Applying Theorem~\ref{epiconvergence} to the
restricted problem on $X_r$, we obtain
$\hat x\in\arg\min_{x\in X_r}\varphi(x)$. Since $x^*$ is also a minimizer
of $\varphi$ on $X_r$, $\varphi(\hat x)=\varphi(x^*)$. Lemma~\ref{liminf}
then gives
\[
\liminf_{k\to\infty} m_k
=\liminf_{k\to\infty}\varphi_k(\hat x_k)
\ge \varphi(\hat x)
= \varphi(x^*).
\]
On the other hand, $m_k\le \varphi_k(x^*)$, and Lemma~\ref{limsup} gives
$
\limsup_{k\to\infty}\varphi_k(x^*)\le \varphi(x^*).
$
Hence, along this subsequence,
$
    m_k\to\varphi(x^*),
$
and $  \varphi_k(x^*)-m_k\to0. $
Set $    \delta_k:=\sqrt{\varphi_k(x^*)-m_k+k^{-2}}.$
Then $\delta_k \to 0$ and $x^*$ is a $\delta_k^2$-minimizer of $\varphi_k$ over $X_r$, that is,
\[
    \varphi_k(x^*)\le \inf_{x\in X_r}\varphi_k(x)+\delta_k^2.
\]
By Ekeland's variational principle
\cite[Proposition~1.43]{rockafellar2009variational}, there exists
$x_k\in X_r$ such that $\|x_k-x^*\|\le \delta_k$ and $x_k$ minimizes
    $x\mapsto \varphi_k(x)+\delta_k\|x-x_k\|$
over $X_r$. In particular, $x_k\to x^*$.
Since $\varphi_k$ is differentiable, Fermat's rule yields
\[
0\in \nabla\varphi_k(x_k)+\delta_k\,\partial\|x-x_k\|_{x=x_k}+\mathcal N_{X_r}(x_k).
\]
Since $\partial\|x-x_k\|_{x=x_k}\subseteq B_1(0)$, there exists
$u_k\in\IR^n$ such that
\[
u_k\in \nabla\varphi_k(x_k)+\mathcal N_{X_r}(x_k),
\qquad
\|u_k\|\le \delta_k\to0.
\]
Since $x_k\to x^*$ and $r>0$, we have $x_k\in\operatorname{int}
B_r(x^*)$ for all sufficiently large $k$. Hence the ball constraint is
inactive, and
$\mathcal N_{X_r}(x_k)=\mathcal N_X(x_k),
\;\text{for all sufficiently large }k.$
After discarding finitely many indices if necessary, we therefore have
\[
u_k\in \nabla\varphi_k(x_k)+\mathcal N_X(x_k),
\qquad
u_k\to0,
\qquad
x_k\to x^*.
\]

By compactness of $Y$, after passing to a subsequence if necessary, assume that
$y_k^*(x_k)\to y^*$. Since $x_k\to x^*$, Proposition~\ref{ystarconvergence}
yields
$y^*\in \arg\max_{y\in Y}\{f(x^*,y)\mid c(x^*,y)\le 0\}.$
By assumption, GP\L CQ holds at $(x^*,y^*)$. Applying
Theorem~\ref{thm:approx-to-type1}, we conclude that $(x^*,y^*)$ is a Type-I
enhanced KKT point.
\end{proof}
\subsection{Proof for Theorem \ref{thm:strict-to-type2}}
\begin{proof}
Fix any $r\in(0,\delta)$ and set $X_r:=X\cap B_r(x^*)$. By strict local
minimality,$\arg\min_{x\in X_r}\varphi(x)=\{x^*\}.$
For each $k$, choose  $x_k\in\arg\min_{x\in X_r}\varphi_{\rho_k,\sigma_k}(x).$
Applying Theorem~\ref{epiconvergence} to the restricted problem on $X_r$, every
accumulation point of $\{x_k\}$ belongs to
$\arg\min_{x\in X_r}\varphi(x)=\{x^*\}$. Since $X_r$ is compact, this implies
$x_k\to x^*$.

For all sufficiently large $k$, we have $\|x_k-x^*\|<r/2$. Therefore
$X\cap B_{r/2}(x_k)\subset X_r,$
so $x_k$ is a local minimizer of $\varphi_{\rho_k,\sigma_k}$ on $X$. Since
$\varphi_{\rho_k,\sigma_k}$ is differentiable, Fermat's rule gives
\[
0\in \nabla \varphi_{\rho_k,\sigma_k}(x_k)+\mathcal N_X(x_k).
\]

By compactness of $Y$, the sequence
$y^*_{\rho_k,\sigma_k}(x_k)$
has an accumulation point. Passing to a subsequence if necessary, assume
$y^*_{\rho_k,\sigma_k}(x_k)\to y^*$. Since $x_k\to x^*$,
Proposition~\ref{ystarconvergence} gives
\[
y^*\in \arg\max_{y\in Y}\{f(x^*,y)\mid c(x^*,y)\le 0\}.
\]
By assumption, GP\L CQ holds at $(x^*,y^*)$. Applying
Theorem~\ref{thm:approx-to-type1} with $u_k\equiv0$, we have
$\rho_k\|u_k\|=0$ for all $k$, and therefore $(x^*,y^*)$ is a Type-II enhanced
KKT point.
\end{proof}

\subsection{Proof for Proposition \ref{prop:spurious-kkt}}

\begin{proof}
Let
$c(x,y):=\mathbf e^\top y-\|x\|^2$ 
and denote the objective function in \eqref{example} by \(f(x,y)\).
Since \((\mathbf 0,\mathbf 0)\in\operatorname{int}(X)\times\operatorname{int}(Y)\),
the normal cones vanish at this point. Moreover, \(c(\mathbf 0,\mathbf 0)=0\),
\[
\nabla_y f(\mathbf 0,\mathbf 0)=\mathbf e,\qquad
\nabla_y c(\mathbf 0,\mathbf 0)=\mathbf e,
\]
and hence the \(y\)-stationarity condition in \eqref{KKT} gives
\((1-\lambda)\mathbf e=\mathbf 0\). Thus the KKT multiplier is uniquely
determined by \(\bar\lambda=1\). With this multiplier, the \(x\)-stationarity
condition also holds because
\[
\nabla_x f(\mathbf 0,\mathbf 0)-\bar\lambda\nabla_x c(\mathbf 0,\mathbf 0)
=\mathbf 0.
\]
Thus \((\mathbf 0,\mathbf 0)\) is a standard KKT point.

Next, we show that \(x=\mathbf 0\) is not a local minimizer of the value
function. For \(x=a\mathbf e\) with \(a\) sufficiently close to zero, \(a<0\),
the inner maximizer is \(y=a^2\mathbf e\), which satisfies
\(\mathbf e^\top y-\|x\|^2=0\). Indeed, the corresponding inner multiplier is
\(1+a/2-a^2>0\) for all sufficiently small \(|a|\), so the KKT conditions of
the strongly concave inner problem are satisfied. A direct substitution gives
$
\varphi(a\mathbf e)=f(a\mathbf e,a^2\mathbf e)=a^3$, and $
\varphi(\mathbf 0)=0.$
Therefore \(\varphi(a\mathbf e)<\varphi(\mathbf 0)\) for all small \(a<0\),
so \(x=\mathbf 0\) is not a local minimizer.

It remains to prove the enhanced KKT statements. Choose any sequences
\(\rho_k\to\infty\) and \(\sigma_k\to0\), and set
\(x_k\equiv\mathbf 0\).
Since the box constraint \(Y\) is inactive at the maximizer for all sufficiently
large \(k\), the unique maximizer of
\(\psi_{\rho_k,\sigma_k}(\mathbf 0,\cdot)\) over \(Y\) is
\[
y_k^*:=y^*_{\rho_k,\sigma_k}(\mathbf 0)
=
\frac{1}{1+\sigma_k+2\rho_k}\mathbf e.
\]
Hence \(y_k^*\to \mathbf 0\), and
$c(\mathbf 0,y_k^*)
=
\frac{2}{1+\sigma_k+2\rho_k}>0.$
Moreover, since \(\mathbf 0\in\operatorname{int}(X)\),
\[
u_k:=\nabla\varphi_{\rho_k,\sigma_k}(\mathbf 0)
=\nabla_x\psi_{\rho_k,\sigma_k}(\mathbf 0,y_k^*)
=\frac{1}{2(1+\sigma_k+2\rho_k)}\mathbf e
\to \mathbf 0.
\]
Since \(\bar\lambda=1>0\) and \(c(\mathbf 0,y_k^*)>0\) for all \(k\), the
strict-positivity requirement is also satisfied. Hence, together with
\(u_k\to\mathbf 0\), all requirements in the definition of a Type-I enhanced
KKT point are verified.

We finally show that \((\mathbf 0,\mathbf 0)\) is not a Type-II enhanced KKT
point. Suppose, to the contrary, that it is Type-II. Since the KKT multiplier
at \((\mathbf 0,\mathbf 0)\) is uniquely given by \(\bar\lambda=1\), there exist
sequences
\[
x_k\to \mathbf 0,\qquad
y_k^*:=y^*_{\rho_k,\sigma_k}(x_k)\to \mathbf 0,\qquad
\rho_k\to\infty,\qquad
\sigma_k\to0,
\]
and
\[
u_k\in \nabla\varphi_{\rho_k,\sigma_k}(x_k)+\mathcal N_X(x_k),
\qquad
\rho_k\|u_k\|\to0,
\]
such that $t_k:=c(x_k,y_k^*)>0$ for all sufficiently large \(k\). Since \(x_k\to\mathbf 0\) and
\(y_k^*\to\mathbf 0\), both \(X\) and \(Y\) are inactive for all sufficiently
large \(k\). Thus \(\mathcal N_X(x_k)=\{0\}\), and
$u_k=\nabla_x\psi_{\rho_k,\sigma_k}(x_k,y_k^*).$
Set
$A_k:=1+\sigma_k,\; B_k:=A_k+2\rho_k,\;
s_k:=\mathbf e^\top x_k,\; q_k:=\|x_k\|^2.$
The first-order condition for the inner maximizer gives
$A_k y_k^*
=
(1-\rho_k t_k)\mathbf e+\frac{x_k}{2}.$

Taking the inner product with \(\mathbf e\) and using
\(t_k=\mathbf e^\top y_k^*-q_k\), we obtain
$t_k=\frac{2+\frac12 s_k-A_k q_k}{B_k}.$
On the other hand,
$u_k
=
(q_k+2+2\rho_k t_k)x_k
-2s_k\mathbf e+\frac12 y_k^*.$
Taking the inner product with \(\mathbf e\), and using
\(\mathbf e^\top y_k^*=t_k+q_k\), yields
$\mathbf e^\top u_k
=
(q_k-2+2\rho_k t_k)s_k+\frac12(t_k+q_k).$
Substituting the expression of \(t_k\) into this identity gives
\[
\rho_k\,\mathbf e^\top u_k
=
\frac{\rho_k}{A_kB_k}
\left[
A_k
+
C_k s_k
+
\rho_k A_k s_k^2
+
\rho_k A_k q_k(1-2\sigma_k s_k)
\right],
\]
where
$C_k:=A_k(A_k q_k-2A_k+\tfrac14)\to -\frac74 .$

Since \(x_k\to\mathbf 0\) and \(\sigma_k\to0\), we have \(s_k\to0\),
\(q_k\to0\), \(A_k\to1\), \(C_k s_k\to0\), and
\(\rho_k/(A_kB_k)\to1/2\). Moreover, \(1-2\sigma_k s_k>0\) for all
sufficiently large \(k\). Hence 
\[
    \rho_k\,\mathbf e^\top u_k
    \ge
    \frac{\rho_k}{A_kB_k}\bigl(A_k+C_k s_k\bigr)
    \ge \frac14
\]
for all sufficiently large \(k\). On the other hand, the Type-II condition
\(\rho_k\|u_k\|\to0\) implies
\[
    |\rho_k\,\mathbf e^\top u_k|
    \le \sqrt{2}\,\rho_k\|u_k\|
    \to0,
\]
which is a contradiction. Thus \((\mathbf 0,\mathbf 0)\) is not a Type-II
enhanced KKT point.
Combining the above statements, Type-II enhanced KKT excludes this spurious
standard KKT point.
\end{proof}

\subsection{Proof for Theorem \ref{thm:approx-to-type1}}
\begin{proof}
Passing to the subsequence that realizes the accumulation point \(\bar y\), and
relabeling the subsequence if necessary, we may assume $
y_k^*(x_k)\to\bar y$.
Recall that
$p(x,y):=\frac12\|[c(x,y)]_+\|^2.$
By Lemma~\ref{feasibility}, \(\bar y\in\Gamma(\bar x)\). Define the penalty
multipliers
$\lambda_k:=\rho_k[c(x_k,y_k^*(x_k))]_+\in\IR^p_+.$
We first prove that \(\{\lambda_k\}\) is bounded. Let
\[
z_k:=\begin{bmatrix}x_k\\ y_k^*(x_k)\end{bmatrix},\qquad
d_k:=
\begin{bmatrix}
\nabla_x p(x_k,y_k^*(x_k))\\
-\nabla_y p(x_k,y_k^*(x_k))
\end{bmatrix},
\qquad
h_k:=
\begin{bmatrix}
-\nabla_x f(x_k,y_k^*(x_k))\\
\nabla_y f(x_k,y_k^*(x_k))-\sigma_k y_k^*(x_k)
\end{bmatrix}.
\]
Since \(x_k\to\bar x\), \(Y\) is compact, and \(\sigma_k\to0\), there is a
constant \(C>0\) such that \(\|h_k\|\le C\) for all sufficiently large \(k\).
Moreover,
\[
\begin{bmatrix}
\nabla_x\LL_k(x_k,y_k^*(x_k))\\
-\nabla_y\LL_k(x_k,y_k^*(x_k))
\end{bmatrix}
=-\rho_k d_k-h_k.
\]
Because GP\L{}CQ holds at \((\bar x,\bar y)\), there exist
\(\beta,\gamma>0\) such that, for all sufficiently large \(k\),
\[
\sqrt{p(x_k,y_k^*(x_k))}
\le
\frac{\gamma}{\beta}
\left\|z_k-\mathcal P_{X\times Y}(z_k+\beta d_k)\right\|.
\]
Set \(\alpha_k:=\beta/\rho_k\). Since
\(p(x_k,y_k^*(x_k))=\frac12\|[c(x_k,y_k^*(x_k))]_+\|^2\), the preceding inequality gives
\[
\|\lambda_k\|
=\sqrt2\,\rho_k\sqrt{p(x_k,y_k^*(x_k))}
\le
\frac{\sqrt2\,\gamma}{\alpha_k}
\left\|z_k-\mathcal P_{X\times Y}(z_k+\alpha_k\rho_k d_k)\right\|.
\]

We now bound the projected residual on the right. By Proposition~\ref{differentiable},
\(\nabla\varphi_k(x_k)=\nabla_x\LL_k(x_k,y_k^*(x_k))\). Hence the assumed approximate stationarity gives 
\begin{equation}\label{thm26_eq1}
u_k \in \nabla_x\LL_k(x_k,y_k^*(x_k))+ \mathcal N_X(x_k) .
\end{equation}
Using the projection characterization of the normal cone,
\[
x_k=\mathcal P_X\!\left(x_k-\alpha_k
\big(\nabla_x\LL_k(x_k,y_k^*(x_k))-u_k\big)\right),
\]
and therefore
\[
\left\|x_k-\mathcal P_X\!\left(x_k-\alpha_k\nabla_x\LL_k(x_k,y_k^*(x_k))\right)\right\|
\le \alpha_k\|u_k\|.
\]
On the other hand, since \(y_k^*(x_k)\) maximizes \(\LL_k(x_k,\cdot)\) over \(Y\),
\[
y_k^*(x_k)=\mathcal P_Y\!\left(y_k^*(x_k)+\alpha_k\nabla_y\LL_k(x_k,y_k^*(x_k))\right).
\]
Combining the two projection relations yields
\[
\left\|z_k-\mathcal P_{X\times Y}\!\left(
z_k-\alpha_k
\begin{bmatrix}
\nabla_x\LL_k(x_k,y_k^*(x_k))\\
-\nabla_y\LL_k(x_k,y_k^*(x_k))
\end{bmatrix}
\right)\right\|
\le \alpha_k\|u_k\|.
\]
Since
$z_k+\alpha_k\rho_k d_k
=z_k-\alpha_k
\begin{bmatrix}
\nabla_x\LL_k(x_k,y_k^*(x_k))\\
-\nabla_y\LL_k(x_k,y_k^*(x_k))
\end{bmatrix}
-\alpha_k h_k,$
nonexpansiveness of the projection gives
\begin{equation}\label{multiplierbound:eq}
    \begin{aligned}
        \left\|z_k-\mathcal P_{X\times Y}(z_k+\alpha_k\rho_k d_k)\right\|
\le \alpha_k(\|u_k\|+C).
    \end{aligned}
\end{equation}
Consequently,
\[
\|\lambda_k\|\le \sqrt2\,\gamma(\|u_k\|+C),
\]
so \(\{\lambda_k\}\) is bounded. Passing to a further subsequence and relabeling,
we may assume \(\lambda_k\to\bar\lambda\in\IR^p_+\).

We next pass to the limiting KKT system. Since
\(\lambda_k=\rho_k[c(x_k,y_k^*(x_k))]_+\), we have
\begin{align*}
    \nabla_x\LL_k(x_k,y_k^*(x_k))=&\nabla_x\ML(x_k,y_k^*(x_k),\lambda_k),\\
\nabla_y\LL_k(x_k,y_k^*(x_k))=&\nabla_y\ML(x_k,y_k^*(x_k),\lambda_k)-\sigma_k y_k^*(x_k).
\end{align*}
Letting \(k\to\infty\) in \eqref{thm26_eq1} and using \(u_k\to0\), the continuity of
\(\nabla_x\ML\), and the closedness of the graph of the normal cone to the
closed convex set \(X\), we obtain
$0\in\nabla_x\ML(\bar x,\bar y,\bar\lambda)+\mathcal N_X(\bar x).$

Similarly, the optimality of \(y_k^*(x_k)\) gives
\[
0\in-\nabla_y\LL_k(x_k,y_k^*(x_k))+\mathcal N_Y(y_k^*(x_k))
=-\nabla_y\ML(x_k,y_k^*(x_k),\lambda_k)+\sigma_k y_k^*(x_k)+\mathcal N_Y(y_k^*(x_k)).
\]
Passing to the limit and using the closedness of the graph of
\(\mathcal N_Y\) yields
\[
0\in-\nabla_y\ML(\bar x,\bar y,\bar\lambda)+\mathcal N_Y(\bar y).
\]

It remains to verify feasibility and complementarity. Since \(\{\lambda_k\}\)
is bounded and \(\rho_k\to\infty\),
$[c(x_k,y_k^*(x_k))]_+=\frac{\lambda_k}{\rho_k}\to0.$
Hence \(c(\bar x,\bar y)\le0\). If \(c_i(\bar x,\bar y)<0\), then
\(c_i(x_k,y_k^*(x_k))<0\) for all sufficiently large \(k\), so
\((\lambda_k)_i=0\) eventually and therefore \(\bar\lambda_i=0\). This proves
\(\bar\lambda^\top c(\bar x,\bar y)=0\). Hence
\((\bar x,\bar y,\bar\lambda)\) satisfies the KKT system~\eqref{KKT}.

Finally, take any \(i\in I^+(\bar\lambda)\). Then
\((\lambda_k)_i>0\) for all sufficiently large \(k\), and since
$(\lambda_k)_i=\rho_k[c_i(x_k,y_k^*(x_k))]_+,$
we have \(c_i(x_k,y_k^*(x_k))>0\) eventually. After discarding finitely many terms and
relabeling, this is exactly the strict-positivity requirement in
Definition~\ref{def:type1-ekkt}. Therefore \((\bar x,\bar y)\) is a Type-I
enhanced KKT point. If the residual sequence satisfies
\(\rho_k\|u_k\|\to0\), this property is preserved by the subsequence extraction
and finite relabeling above. The same certifying sequence then satisfies
Definition~\ref{def:type2-ekkt}, so \((\bar x,\bar y)\) is a Type-II enhanced
KKT point.
\end{proof}

\section{Proof for Section \ref{convergence analysis}}\label{proof for convergence analysis}

\subsection{Proof for Lemma \ref{gradientLipschitz}}
\begin{proof}
Let \(z_i:=(x_i,y_i)\in X\times Y\), \(i=1,2\). For each constraint \(j\), using the
nonexpansiveness of \(t\mapsto[t]_+\), the bounds
\(|[c_j(z)]_+|\le M\), \(\|\nabla c_j(z)\|\le M\), and the Lipschitz
continuity of \(\nabla c_j\), we have
\[
\begin{aligned}
&\|[c_j(z_2)]_+\nabla c_j(z_2)-[c_j(z_1)]_+\nabla c_j(z_1)\|  \\
\le\;&
|[c_j(z_2)]_+-[c_j(z_1)]_+|\,\|\nabla c_j(z_2)\|
+|[c_j(z_1)]_+|\,\|\nabla c_j(z_2)-\nabla c_j(z_1)\|  \\
\le\;&
M|c_j(z_2)-c_j(z_1)|+ML_c\|z_2-z_1\|.
\end{aligned}
\]
By the mean-value theorem and the bound \(\|\nabla c_j\|\le M\),
\(|c_j(z_2)-c_j(z_1)|\le M\|z_2-z_1\|\),
\[
\|[c_j(z_2)]_+\nabla c_j(z_2)-[c_j(z_1)]_+\nabla c_j(z_1)\|
\le (M^2+ML_c)\|z_2-z_1\|.
\]
Summing over \(j=1,\ldots,p\), we obtain
\[
\begin{aligned}
\|\nabla \LL_k(z_2)-\nabla \LL_k(z_1)\|
&\le L_f\|z_2-z_1\|+\sigma_k\|z_2-z_1\| +\rho_k p\bigl(M^2+ML_c\bigr)\|z_2-z_1\|  \\
&=
\bigl(L_f+\rho_k pM^2+\rho_k pML_c+\sigma_k\bigr)\|z_2-z_1\|.
\end{aligned}
\]
Thus \(\nabla\LL_k\) is \(L_k\)-Lipschitz continuous on \(X\times Y\) with
\(L_k=L_f+\rho_k pML_c+\rho_k pM^2+\sigma_k\).

It remains to prove the uniform lower bound. By Assumption~\ref{assum1}, for
each \(x\in X\) there exists \(\tilde y(x)\in\Gamma(x)\). Hence the penalty
term vanishes at \((x,\tilde y(x))\), and
\[
\begin{aligned}
\varphi_k(x)
&=\max_{y\in Y}\LL_k(x,y)
\ge \LL_k(x,\tilde y(x))  \\
&=f(x,\tilde y(x))-\frac{\sigma_k}{2}\|\tilde y(x)\|^2
\ge \inf_{(x,y)\in X\times Y}f(x,y)-\frac{\sigma_k}{2}D_y^2 .
\end{aligned}
\]
Since \(X\times Y\) is compact and \(f\) is continuous, the infimum above is
finite. Also, \(\sigma_k\to0\), so \(\{\sigma_k\}\) is bounded. Hence, with
\(\bar\sigma:=\sup_k\sigma_k<\infty\),
\[
\varphi_k(x)\ge
\inf_{(x,y)\in X\times Y}f(x,y)-\frac{\bar\sigma}{2}D_y^2
:=\underline{\varphi}
\]
for all \(x\in X\) and all \(k\).
\end{proof}


\subsection{Proof for Lemma \ref{lineardescent}}
\begin{proof}
Apply \citep[Lemma~21]{pmlr-v206-xiao23a} to
\(g_k(\cdot):=-\LL_k(x^k,\cdot)\). Since \(g_k\) is
\(\sigma_k\)-strongly convex and \(L_k\)-smooth on \(Y\), the asserted
one-step bound follows immediately.
\end{proof}

\subsection{Proof for Lemma \ref{ydescentlemma}}

Before presenting the formal proof of Lemma \ref{ydescentlemma}, we first establish two auxiliary lemmas that quantify the variations of the exact inner maximizer $y^*_k(x)$ with respect to the primal variable $x$ and the dynamically varying parameters $(\rho_k, \sigma_k)$. 

 \begin{lemma}\label{yxk+1xk}
	 	For any $x,x'\in X$, the solution mapping
$x\mapsto y_k^*(x)$ satisfies
\begin{equation}\label{eq:lip-y-star}
    \|y_k^*(x')-y_k^*(x)\|
    \le
    \frac{L_k}{\sigma_k}\|x'-x\|.
\end{equation}
Moreover, $\varphi_k$ has a Lipschitz continuous gradient on $X$:
\begin{equation}\label{eq:lip-grad-phi}
    \|\nabla \varphi_k(x')-\nabla \varphi_k(x)\|
    \le
    L_{\varphi_k}\|x'-x\|,
\end{equation}
where $L_{\varphi_k}
    :=
    \frac{L_k(L_k+\sigma_k)}{\sigma_k}.$
\end{lemma}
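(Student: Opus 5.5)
The plan is to use the standard sensitivity argument for strongly concave parametric maximization over a fixed convex set. The first step is to record two facts about $\LL_k(x,\cdot)$. It is $\sigma_k$-strongly concave on $Y$: $f(x,\cdot)$ is concave, $\tfrac{\rho_k}{2}\|[c(x,\cdot)]_+\|^2$ is convex because each $[c_i(x,\cdot)]_+$ is a nonnegative convex function and its square stays convex, and $-\tfrac{\sigma_k}{2}\|y\|^2$ contributes the strong concavity. Moreover, Lemma~\ref{gradientLipschitz} gives that $\nabla\LL_k$ is $L_k$-Lipschitz jointly on $X\times Y$, so in particular $\|\nabla_y\LL_k(x',y)-\nabla_y\LL_k(x,y)\|\le L_k\|x'-x\|$.

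For \eqref{eq:lip-y-star}, write $y=y_k^*(x)$ and $y'=y_k^*(x')$. The first-order optimality conditions on $Y$ read
\[
\langle \nabla_y\LL_k(x,y),\,z-y\rangle\le0
\quad\text{and}\quad
\langle \nabla_y\LL_k(x',y'),\,z-y'\rangle\le0
\qquad\text{for all } z\in Y.
\]
Take $z=y'$ in the first and $z=y$ in the second, then add the two inequalities. This gives
\[
\langle \nabla_y\LL_k(x,y)-\nabla_y\LL_k(x',y'),\,y'-y\rangle\le0 .
\]
Insert $\pm\nabla_y\LL_k(x,y')$. Strong concavity yields $\langle \nabla_y\LL_k(x,y)-\nabla_y\LL_k(x,y'),\,y'-y\rangle\ge\sigma_k\|y'-y\|^2$, and the remaining cross term is bounded by $L_k\|x'-x\|\,\|y'-y\|$. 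Dividing by $\|y'-y\|$ gives $\|y'-y\|\le (L_k/\sigma_k)\|x'-x\|$.

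For \eqref{eq:lip-grad-phi}, use Proposition~\ref{differentiable}, which gives $\nabla\varphi_k(x)=\nabla_x\LL_k(x,y_k^*(x))$. By the joint Lipschitz property,
\[
\|\nabla\varphi_k(x')-\nabla\varphi_k(x)\|\le L_k\bigl(\|x'-x\|+\|y'-y\|\bigr)\le L_k\Bigl(1+\frac{L_k}{\sigma_k}\Bigr)\|x'-x\| ,
\]
and the right-hand side equals $\frac{L_k(L_k+\sigma_k)}{\sigma_k}\|x'-x\|$. The sum-of-norms bound is valid because $\|(a,b)\|\le\|a\|+\|b\|$, so no extra factor appears.

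The only delicate point is justifying strong concavity, that is, the convexity of the squared positive part of a convex function. This holds since $t\mapsto[t]_+^2$ is convex and nondecreasing, so composing it with the convex function $c_i(x,\cdot)$ preserves convexity. Everything else is routine.
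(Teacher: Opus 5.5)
Your proof is correct and follows essentially the same route as the paper. The paper simply invokes Lemma~4.3 of \cite{lin2020gradient} for $\LL_k$, with smoothness constant $L_k$ and strong-concavity modulus $\sigma_k$. Your argument is exactly the standard proof of that lemma: add the two variational inequalities, use strong monotonicity of $-\nabla_y\LL_k(x,\cdot)$ together with the joint $L_k$-Lipschitz bound from Lemma~\ref{gradientLipschitz}, and then apply the Danskin formula of Proposition~\ref{differentiable}. Your explicit check that $[c_i(x,\cdot)]_+^2$ is convex, so that $\LL_k(x,\cdot)$ is genuinely $\sigma_k$-strongly concave on $Y$, supplies a hypothesis the paper uses without comment.
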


\begin{proof}
The result follows from \citep[Lemma~4.3]{lin2020gradient} applied to
\(\LL_k\).
\end{proof}

The second auxiliary lemma bounds the drift of the inner maximizer induced by the updates of the penalty and regularization parameters for any fixed $x$.
\begin{lemma}\label{yk+1yk}
Let $\{\rho_k\}$ and $\{\sigma_k\}$ be sequences such that
$\rho_{k+1} \ge \rho_{k}>0$ and $\sigma_{k} \ge\sigma_{k+1}>0$.
Then, for any fixed $x \in X$, we have
\begin{align}
\| y_{k+1}^*(x) - y_{k}^*(x)\|
\le \frac{\rho_{k+1}-\rho_{k}}{\sigma_k}M^2
+\frac{\sigma_{k}-\sigma_{k+1}}{\sigma_k} D_y.
\end{align}
\end{lemma}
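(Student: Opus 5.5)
Fix $x\in X$ and write $y_k:=y_k^*(x)$, $y_{k+1}:=y_{k+1}^*(x)$. The argument is the standard strong-monotonicity comparison of two variational inequalities. By concavity of $f(x,\cdot)$, convexity of each $c_i(x,\cdot)$ and the regularization term, the function $\LL_k(x,\cdot)$ is $\sigma_k$-strongly concave on $Y$. Its unique maximizer $y_k$ is characterized by
\[
\langle \nabla_y\LL_k(x,y_k),\,y-y_k\rangle\le 0\qquad\forall y\in Y,
\]
and likewise for $y_{k+1}$ with $\LL_{k+1}$.

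We test the first inequality at $y=y_{k+1}$ and the second at $y=y_k$. Strong concavity gives
\[
\sigma_k\|y_{k+1}-y_k\|^2\le \langle \nabla_y\LL_k(x,y_k)-\nabla_y\LL_k(x,y_{k+1}),\,y_{k+1}-y_k\rangle .
\]
Adding the optimality inequality for $y_{k+1}$, in the form $\langle\nabla_y\LL_{k+1}(x,y_{k+1}),y_k-y_{k+1}\rangle\le0$, and the one for $y_k$ yields
\[
\sigma_k\|y_{k+1}-y_k\|^2\le \langle \nabla_y\LL_{k+1}(x,y_{k+1})-\nabla_y\LL_k(x,y_{k+1}),\,y_{k+1}-y_k\rangle .
\]
So only the parameter-induced difference of gradients, evaluated at the single point $y_{k+1}$, remains.

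That difference equals
\[
-(\rho_{k+1}-\rho_k)\sum_i[c_i(x,y_{k+1})]_+\nabla_y c_i(x,y_{k+1})+(\sigma_k-\sigma_{k+1})\,y_{k+1}.
\]
Using the bounds $\|[c]_+\|\le M$ and $\|\nabla c\|\le M$, the first term is at most $(\rho_{k+1}-\rho_k)M^2$ in norm. Here the constant $M$ is read as a bound on the Jacobian, $\|\sum_i[c_i]_+\nabla_yc_i\|=\|\nabla_yc^\top[c]_+\|\le\|\nabla c\|\,\|[c]_+\|\le M^2$. The second term is at most $(\sigma_k-\sigma_{k+1})D_y$. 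Applying Cauchy--Schwarz and dividing by $\sigma_k\|y_{k+1}-y_k\|$ (the case $y_{k+1}=y_k$ is trivial) gives the claim. The monotonicity assumptions $\rho_{k+1}\ge\rho_k$ and $\sigma_k\ge\sigma_{k+1}$ are used only to write these differences without absolute values.

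The main care point is using the strong concavity modulus $\sigma_k$ rather than $\sigma_{k+1}$. This is done by choosing which gradient, $\LL_k$ or $\LL_{k+1}$, carries the strong-monotonicity comparison, as arranged above. Everything else is routine.
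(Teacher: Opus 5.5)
Your proposal is correct and follows essentially the same route as the paper. Both arguments combine the first-order optimality conditions for $y_k^*(x)$ and $y_{k+1}^*(x)$ with the $\sigma_k$-strong concavity of $\LL_k(x,\cdot)$ to obtain $\sigma_k\|y_{k+1}^*(x)-y_k^*(x)\|^2\le\langle\nabla_y\LL_{k+1}(x,y_{k+1}^*(x))-\nabla_y\LL_k(x,y_{k+1}^*(x)),\,y_{k+1}^*(x)-y_k^*(x)\rangle$, then bound this parameter-induced gradient difference by $(\rho_{k+1}-\rho_k)M^2+(\sigma_k-\sigma_{k+1})D_y$ and conclude by Cauchy--Schwarz.
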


\begin{proof}
By the first-order optimality conditions of the lower-level problems, we have
\begin{align}\label{yk+1yk:eq1}
0\in  -\nabla_y \LL_{k}(x,y^*_{k}(x))+\mathcal{N}_{Y}( y^*_k(x)),\; 0\in  -\nabla_y \LL_{k+1}(x,y^*_{k+1}(x))+\mathcal{N}_{Y}( y^*_{k+1}(x)).
\end{align}

We first quantify the change in the gradient of the lower-level objective
induced by the update of the penalty parameters.
Expanding the difference between the gradients of $\LL_{k}$ and $\LL_{k+1}$
at $y^*_{k+1}(x)$ yields
\[
\begin{aligned}
&-\nabla_y \LL_{k+1}(x,y^*_{k+1}(x))
+ \nabla_y \LL_{k}(x,y^*_{k+1}(x)) \\
=\,& (\rho_{k+1}-\rho_{k})
[c(x,y^*_{k+1}(x))]_+\nabla_y c(x,y^*_{k+1}(x))
+ (\sigma_{k+1} - \sigma_k)y_{k+1}^*(x).
\end{aligned}
\]

Recalling that \(\|c(x,y)\|\le M\), \(\|\nabla_y c(x,y)\|\le M\), and
\(\|y\|\le D_y\) for all \((x,y)\in X\times Y\), we obtain
\begin{equation}\label{yk+1yk:eq2}
\|\nabla_y \LL_{k+1}(x,y^*_{k+1}(x))
- \nabla_y \LL_{k}(x,y^*_{k+1}(x)) \|
\le  (\rho_{k+1} - \rho_k)M^2 + (\sigma_{k} - \sigma_{k+1})D_y.
\end{equation}

Next, using the optimality condition \eqref{yk+1yk:eq1},
together with the $\sigma_k$-strong concavity of $\LL_k$ with respect to $y$
and the monotonicity of the normal cone $\mathcal{N}_{Y}$,
we obtain
\[
\begin{aligned}
\sigma_k\| y_{k+1}^*(x) - y_{k}^*(x)\|^2
\le\,&
\langle \nabla_y \LL_{k+1}(x,y^*_{k+1}(x))
- \nabla_y \LL_{k}(x,y^*_{k+1}(x)),
\, y^*_{k+1}(x) - y^*_{k}(x) \rangle \\
\le\,&
\|\nabla_y \LL_{k+1}(x,y^*_{k+1}(x))
- \nabla_y \LL_{k}(x,y^*_{k+1}(x)) \|
\| y^*_{k+1}(x) - y^*_{k}(x)\|.
\end{aligned}
\]

Substituting \eqref{yk+1yk:eq2} into the above inequality
 yields the desired result.
\end{proof}

\begin{proof}[Proof for Lemma \ref{ydescentlemma}]
By Young's inequality, for any \(\hat\delta>0\),
\begin{equation}\label{ydescentlemma:eq1}
\begin{aligned}
\E[\|y^{k+1}-y_{k+1}^*(x^{k+1})\|^2\mid \F_k]
\le\;&(1+\hat{\delta})
\E[\|y^{k+1}-y_{k}^*(x^{k})\|^2\mid \F_k]\\
&+\left(1+\frac{1}{\hat{\delta}}\right)
\E[\|y_{k}^*(x^{k})-y_{k+1}^*(x^{k+1})\|^2\mid \F_k].
\end{aligned}
\end{equation}
Take \(\hat{\delta}=\frac{1}{2}\beta_k\sigma_k\). By
Lemma~\ref{lineardescent},
\[
\begin{aligned}
&(1+\hat\delta)\E[\|y^{k+1}-y_{k}^*(x^{k})\|^2\mid \F_k]\\
\le\;&
\left(1+\frac{1}{2}\beta_k\sigma_k\right)(1-\beta_k\sigma_k)
\|y^{k}-y_{k}^*(x^{k})\|^2
+\left(1+\frac{1}{2}\beta_k\sigma_k\right)\beta_k^2\delta^2\\
\le\;&
\left(1-\frac{1}{2}\beta_k\sigma_k\right)
\|y^{k}-y_{k}^*(x^{k})\|^2
+\left(1+\frac{1}{2}\beta_k\sigma_k\right)\beta_k^2\delta^2.
\end{aligned}
\]
Moreover, Lemmas~\ref{yxk+1xk} and~\ref{yk+1yk} give
\[
\begin{aligned}
&\left(1+\frac{1}{\hat{\delta}}\right)
\E[\|y_{k}^*(x^{k})-y_{k+1}^*(x^{k+1})\|^2\mid \F_k]\\
\le\;&
2\left(1+\frac{2}{\beta_k\sigma_k}\right)
\left(\E[\|y_{k}^*(x^{k})-y_{k}^*(x^{k+1})\|^2\mid \F_k]+
\E[\|y_{k}^*(x^{k+1})-y_{k+1}^*(x^{k+1})\|^2\mid \F_k]\right)\\
\le\;&
2\left(1+\frac{2}{\beta_k\sigma_k}\right)
\left[
\frac{L_k^2}{\sigma_k^2}\E[\|x^{k+1}-x^{k}\|^2\mid \F_k]
+\frac{2(\rho_{k+1}-\rho_{k})^2}{\sigma_k^2}M^4
+\frac{2(\sigma_{k}-\sigma_{k+1})^2}{\sigma_k^2}D_y^2
\right].
\end{aligned}
\]
Combining these two estimates with \eqref{ydescentlemma:eq1} yields the
desired inequality.
\end{proof}

\subsection{Proof for Lemma \ref{xdescentlemma}}

Before presenting the proof of Lemma \ref{xdescentlemma}, we establish an auxiliary result that bounds the variation of the approximate value function $\varphi_k(x)$ induced by the updates of  $\rho_k$ and $\sigma_k$.
\begin{lemma}\label{phik+1phik}
	Let $\{\rho_k\}$ and $\{\sigma_k\}$  be sequences such that $\rho_{k+1} \ge \rho_{k}>0$, $\sigma_{k} \ge\sigma_{k+1}>0$. Then, for any $x \in X$, we have 
\begin{align}
    \varphi_{k+1}(x) - \varphi_{k}(x)
        \le \;&\frac{1}{2}(\sigma_{k}-\sigma_{k+1})D_y^2.
\end{align}
\end{lemma}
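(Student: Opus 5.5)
The bound follows by comparing the two maximization problems at the single point $y^*_{k+1}(x)$, without using any optimality conditions. Fix $x\in X$ and write $y_{k+1}:=y^*_{k+1}(x)$. By definition $\varphi_{k+1}(x)=\LL_{k+1}(x,y_{k+1})$. Since $y_{k+1}\in Y$ is feasible for the problem defining $\varphi_k(x)$, we also have $\varphi_k(x)\ge \LL_k(x,y_{k+1})$. Subtracting,
\[
\varphi_{k+1}(x)-\varphi_k(x)\le \LL_{k+1}(x,y_{k+1})-\LL_k(x,y_{k+1}).
\]

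Next we expand the right-hand side using the definition of $\LL_{\rho,\sigma}$. The $f$ terms cancel, which leaves
\[
-\frac{\rho_{k+1}-\rho_k}{2}\big\|[c(x,y_{k+1})]_+\big\|^2+\frac{\sigma_k-\sigma_{k+1}}{2}\|y_{k+1}\|^2 .
\]
The first term is nonpositive because $\rho_{k+1}\ge\rho_k$, so we drop it. In the second term, $\sigma_k-\sigma_{k+1}\ge0$ and $\|y_{k+1}\|\le D_y$ since $y_{k+1}\in Y$. Hence the second term is at most $\tfrac12(\sigma_k-\sigma_{k+1})D_y^2$, which is the claimed bound.

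There is no real obstacle. The only point requiring care is the direction of the comparison: we must evaluate at the maximizer of the \emph{later} problem, $\LL_{k+1}$, and use it as a feasible point for the earlier problem $\LL_k$. With this choice, the monotonicity of $\rho_k$ makes the penalty difference favorable, and the monotonicity of $\sigma_k$ together with compactness of $Y$ bounds the regularization difference.
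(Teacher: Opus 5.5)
Your proposal is correct and follows the paper's proof essentially verbatim: you evaluate both problems at $y^*_{k+1}(x)$, use $\varphi_k(x)\ge \LL_k(x,y^*_{k+1}(x))$, and bound $\LL_{k+1}-\LL_k$ at that point. Like the paper, you then use the monotonicity of $\rho_k$ to discard the penalty difference and the monotonicity of $\sigma_k$ together with $\|y\|\le D_y$ to bound the regularization difference.
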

\begin{proof}
    Recall the definition of $\varphi_{k}(x)$
        $\varphi_{k}(x):=\max_{y\in Y}\LL_{k}(x,y)\ge \LL_{k}(x,y_{k+1}^*(x)).$
    Thus, we have the following inequality:
    \begin{align*}
        		\varphi_{k+1}(x) - \varphi_{k}(x)
        \le\;&\LL_{k+1}(x,y_{k+1}^*(x))-\LL_{k}(x,y_{k+1}^*(x))
        \le \;\frac{1}{2}(\sigma_{k}-\sigma_{k+1})D_y^2,
    \end{align*}
    which completes the proof.
\end{proof}

\begin{proof}[Proof of Lemma \ref{xdescentlemma}]
Let
$\Delta x^k:=x^{k+1}-x^k,\; y_k^*:=y_k^*(x^k).$
We first condition on \(\F_{k+\frac12}\). At this stage, \(y^{k+1}\) is
\(\F_{k+\frac12}\)-measurable, while the remaining randomness in \(x^{k+1}\)
comes from the sample \(\xi_k^x\). By Lemma~\ref{phik+1phik}, applied at
\(x^{k+1}\),
\begin{equation}\label{xdescentlemma:eq1}
\E[\varphi_{k+1}(x^{k+1})-\varphi_k(x^{k+1})\mid \F_{k+\frac12}]
\le \frac{1}{2}(\sigma_k-\sigma_{k+1})D_y^2 .
\end{equation}
Moreover, the Lipschitz continuity of \(\nabla\varphi_k\) from
Lemma~\ref{yxk+1xk}, combined with the standard descent inequality  \citep[Lemma 5.7]{beck2017first},   gives
\begin{equation}\label{xdescentlemma:eq2}
\E[\varphi_k(x^{k+1})\mid \F_{k+\frac12}]-\varphi_k(x^k)
\le
\E[\langle \nabla\varphi_k(x^k),\Delta x^k\rangle\mid \F_{k+\frac12}]
+\frac{L_{\varphi_k}}{2}\E[\|\Delta x^k\|^2\mid \F_{k+\frac12}].
\end{equation}

The projection update
$x^{k+1}=\P_X\bigl(x^k-\alpha_kd_x^k\bigr)
=\underset{x\in X}{\arg\min}\left\{\left\langle d_x^k,x\right\rangle+\frac{1}{2\alpha_k}\|x-x^k\|^2\right\}.$
implies
$\frac{1}{\alpha_k}\|\Delta x^k\|^2
\le \langle -d_x^k,\Delta x^k\rangle .$
Combining this inequality with \eqref{xdescentlemma:eq2} and using
\(\nabla\varphi_k(x^k)=\nabla_x\LL_k(x^k,y_k^*)\), we obtain
\[
\begin{aligned}
&\E[\varphi_k(x^{k+1})\mid \F_{k+\frac12}]-\varphi_k(x^k)
+\left(\frac{1}{\alpha_k}-\frac{L_{\varphi_k}}{2}\right)
\E[\|\Delta x^k\|^2\mid \F_{k+\frac12}] \\
\le \, &
\E[\langle \nabla_x\LL_k(x^k,y_k^*)-\nabla_x\LL_k(x^k,y^{k+1}),
\Delta x^k\rangle\mid \F_{k+\frac12}]+
\E[\langle \nabla_x\LL_k(x^k,y^{k+1})-d_x^k,
\Delta x^k\rangle\mid \F_{k+\frac12}].
\end{aligned}
\]
For the first term on the right-hand side, Young's inequality gives
\[
\begin{aligned}
&\E[\langle \nabla_x\LL_k(x^k,y_k^*)-\nabla_x\LL_k(x^k,y^{k+1}),
\Delta x^k\rangle\mid \F_{k+\frac12}]\\
\le&
\frac{\alpha_k}{2}L_k^2
\|y^{k+1}-y_k^*\|^2
+\frac{1}{2\alpha_k}\E[\|\Delta x^k\|^2\mid \F_{k+\frac12}],
\end{aligned}
\]
where \(y^{k+1}\) and \(y_k^*\) are \(\F_{k+\frac12}\)-measurable.
For the second term, by the definition of \(e_x^k\),
\[
\begin{aligned}
\E[\langle \nabla_x\LL_k(x^k,y^{k+1})-d_x^k,\Delta x^k\rangle
\mid \F_{k+\frac12}]\le
\alpha_k\E[\|e_x^k\|^2\mid \F_{k+\frac12}]
+\frac{1}{4\alpha_k}\E[\|\Delta x^k\|^2\mid \F_{k+\frac12}].
\end{aligned}
\]
Substituting these two bounds into the preceding inequality, moving the terms
\(\frac{1}{2\alpha_k}\E[\|\Delta x^k\|^2\mid\F_{k+\frac12}]\) and
\(\frac{1}{4\alpha_k}\E[\|\Delta x^k\|^2\mid\F_{k+\frac12}]\) to the
left-hand side, and then adding \eqref{xdescentlemma:eq1}, yields
\[
\begin{aligned}
&\E[\varphi_{k+1}(x^{k+1})\mid \F_{k+\frac12}]-\varphi_k(x^k)
+\left(\frac{1}{4\alpha_k}-\frac{L_{\varphi_k}}{2}\right)
\E[\|\Delta x^k\|^2\mid \F_{k+\frac12}]\\
\le \, &
\frac{\alpha_k}{2}L_k^2\|y^{k+1}-y_k^*\|^2
+\alpha_k\E[\|e_x^k\|^2\mid \F_{k+\frac12}]
+\frac{1}{2}(\sigma_k-\sigma_{k+1})D_y^2.
\end{aligned}
\]
Finally, taking conditional expectation with respect to \(\F_k\), using the
tower property, and applying Lemma~\ref{lineardescent} give
\[
\E[\|y^{k+1}-y_k^*\|^2\mid\F_k]
\le \|y^k-y_k^*\|^2+\beta_k^2\delta^2.
\]
Substituting this bound and using
\(\E[\E[\|e_x^k\|^2\mid\F_{k+\frac12}]\mid\F_k]
=\E[\|e_x^k\|^2\mid\F_k]\) proves the stated inequality.
\end{proof}

\subsection{Proof for Lemma \ref{variancereduction}}
\begin{proof}
Recall that $\F_k$ is the $\sigma$-algebra generated by the history up to iteration
\(k\). Denote the exact directions by
$D_x^{j}:=\nabla_x \LL_{j}(x^{j},y^{j+1}),\qquad j=k-1,k,$
so that \(e_x^{j}=d_x^{j}-D_x^{j}\). For brevity, write
$G_k:=\nabla_x \Psi_{k}(x^{k},y^{k+1};\xi_{k}^x),\qquad
\widehat G_k:=\nabla_x \Psi_{k-1}(x^{k-1},y^{k};\xi_{k}^x).$

Since the oracle is unbiased and \(\xi_k^x\) is independent of \(\F_{k+\frac12}\), we have
$\E[G_k\mid \F_{k+\frac12}]=D_x^{k},
\;
\E[\widehat G_k\mid \F_{k}]=D_x^{k-1}.$
Consequently, by the tower property and the \(\F_k\)-measurability of
\(e_x^{k-1}\),
we have
$\E\!\left[\left\langle e_x^{k-1},\,G_k-D_x^{k}\right\rangle\mid \F_{k}\right]=0,\;\E\!\left[\left\langle e_x^{k-1},\,D_x^{k-1}-\widehat G_k\right\rangle\mid \F_{k}\right]=0.$

Thus, the variance at the \(k\)-th iteration can be decomposed as
\begin{equation}\label{varaiancereduction:eq0}
\begin{aligned}
\E[\|e_x^{k}\|^2\mid \F_{k}]
=&\E[\|G_k-D_x^{k}+(1-\eta_{k})(D_x^{k-1}-\widehat G_k)\|^2\mid \F_{k}]
+(1-\eta_{k})^2\|e_x^{k-1}\|^2.
\end{aligned}
\end{equation}
Moreover,
$G_k-D_x^{k}+(1-\eta_k)(D_x^{k-1}-\widehat G_k)
=(1-\eta_k)(G_k-\widehat G_k+D_x^{k-1}-D_x^{k})+\eta_k(G_k-D_x^{k}).$

Hence, by Caychr-schwarz inequality, the bounded variance
assumption, and \(0\le\eta_k\le1\),
\begin{equation}\label{variancereduction:eq1}
\begin{aligned}
&\E[\|G_k-D_x^{k}+(1-\eta_{k})(D_x^{k-1}-\widehat G_k)\|^2\mid \F_{k}]
\le\;
2\E[\|G_k-\widehat G_k+D_x^{k-1}-D_x^{k}\|^2\mid \F_{k}]
+2\eta_{k}^2\delta^2.
\end{aligned}
\end{equation}

Now define
\begin{align*}
    A_k:=&G_k-\nabla_x \Psi_{k}(x^{k-1},y^{k};\xi_{k}^x),\\
    B_k:=&\nabla_x \Psi_{k}(x^{k-1},y^{k};\xi_{k}^x)-\widehat G_k
-\bigl(\nabla_x \LL_{k}(x^{k-1},y^{k})-\nabla_x \LL_{k-1}(x^{k-1},y^{k})\bigr),\\
C_k:=&\nabla_x \LL_{k}(x^{k},y^{k+1})-\nabla_x \LL_{k}(x^{k-1},y^{k}).
\end{align*}
Then we have 
$G_k-\widehat G_k+D_x^{k-1}-D_x^{k}=A_k+B_k-C_k.$
Therefore, by Caychr-schwarz inequality,
\[
\E[\|G_k-\widehat G_k+D_x^{k-1}-D_x^{k}\|^2\mid \F_k]
\le
3\E[\|A_k\|^2\mid \F_k]
+3\E[\|B_k\|^2\mid \F_k]
+3\E[\|C_k\|^2\mid \F_k].
\]

For the middle term, note that the stochasticity arises solely from the stochastic
objective realization \(F(\cdot,\cdot;\xi)\), while the penalty and regularization terms are
deterministic and independent of the random sample \(\xi\). Therefore,
$B_k=0,$ and hence $\E[\|B_k\|^2\mid \F_k]=0.$

For the first term, combining Assumption~\ref{stochasticsmooth} with the same estimate as
in the proof of Lemma~\ref{gradientLipschitz}, we obtain
\begin{align*}
\E[\|A_k\|^2\mid \F_k]
=&\E[\|\nabla_x \Psi_{k}(x^{k},y^{k+1};\xi_{k}^x)-\nabla_x \Psi_{k}(x^{k-1},y^{k};\xi_{k}^x)\|^2\mid \F_{k}]\\
\le\;&
2(L_{f}+\rho_{k}pML_c+\rho_{k}pM^2)^2
\E[\|x^{k}-x^{k-1}\|^2+\|y^{k+1}-y^{k}\|^2\mid \F_{k}]\\
\le\;&
2L_{k}^2\|x^{k}-x^{k-1}\|^2
+2L_{k}^2\E[\|y^{k+1}-y^{k}\|^2\mid \F_{k}].
\end{align*}
For the third term, since \(\nabla_x\LL_k\) is \(L_k\)-Lipschitz continuous,
\[
\E[\|C_k\|^2\mid \F_k]
\le
L_k^2\|x^k-x^{k-1}\|^2
+L_k^2\E[\|y^{k+1}-y^k\|^2\mid \F_k].
\]
Combining the above estimates yields
\begin{equation}\label{variancereduction:eq2}
\begin{aligned}
&\E[\|G_k-\widehat G_k+D_x^{k-1}-D_x^{k}\|^2\mid \F_k]
\le\;
9L_k^2\|x^k-x^{k-1}\|^2
+9L_k^2\E[\|y^{k+1}-y^k\|^2\mid \F_k].
\end{aligned}
\end{equation}
Combining \eqref{variancereduction:eq1} and \eqref{variancereduction:eq2}, we have
\[
\E[\|G_k-D_x^{k}+(1-\eta_{k})(D_x^{k-1}-\widehat G_k)\|^2\mid \F_{k}]
\le
18L_{k}^2\Bigl(\|x^{k}-x^{k-1}\|^2+\E[\|y^{k+1}-y^{k}\|^2\mid \F_{k}]\Bigr)
+2\eta_{k}^2\delta^2.
\]
Further, by the non-expansiveness of projection and the boundedness of \(y_k^*(x^k)\),
we have
\begin{align*}
&\E[\|y^{k+1}-y^{k}\|^2\mid \F_{k}]\\
=\;&\E[\|\P_Y(y^{k}+\beta_{k}d^{y}_{k})-\P_Y(y^{k})\|^2\mid \F_{k}]\\
\le\;&3\beta_{k}^2\E[\|d^{y}_{k}-\nabla_{y}\LL_{k}(x^{k},y^{k})\|^2\mid \F_{k}]
+3\beta_{k}^2L_{k}^2\|y^{k}-y^*_{k}(x^{k})\|^2\\
&+3\beta_{k}^2\|\nabla_y f(x^{k},y^*_{k}(x^{k}))
-\rho_{k}\sum_{i=1}^p[c_i(x^{k},y^*_{k}(x^{k}))]_+\nabla_y c_i(x^{k},y^*_{k}(x^{k}))
-\sigma_k y^*_{k}(x^{k})\|^2\\
\le\;&3\beta_{k}^2\delta^2
+3\beta_{k}^2(M+\rho_{k}pM^2+\sigma_k D_y)^2
+3\beta_{k}^2L_{k}^2\|y^{k}-y^*_{k}(x^{k})\|^2.
\end{align*}
By substituting this estimate into the previous inequality, and using \eqref{varaiancereduction:eq0}, the proof is completed.
\end{proof}

\subsection{Proof for Proposition \ref{lyapunovproposition}}

\begin{proof}
If necessary, replace the variance constant \(\delta\) in
Assumption~\ref{assumgradient} by \(\max\{\delta,1\}\).
We first recall the coefficient sequences and parameter choices. For \(k\ge1\),
\[
a_k=(k+1)^{-2t},\; b_k=(k+1)^{-3t},\; c_k=(k+1)^{-7t},\; d_k=(k+1)^{-4t}, 
\]
\[
\alpha_k=\alpha_0(k+1)^{-6t-s},\;
\beta_k=\beta_0(k+1)^{-t-s},\;
\eta_k=\eta_0(k+1)^{-s},\;
\sigma_k=\sigma_0(k+1)^{-t},\;
\rho_k=\rho_0(k+1)^t.
\]
Moreover,
$L_k=L_f+\rho_k pML_c+\rho_k pM^2+\sigma_k=\mathcal O(k^t),
\;
L_{\varphi_k}=\mathcal O(k^{3t}).$
It follows that \(\beta_kL_k=\mathcal O((k+1)^{-s})\to0\), so
\(0<\beta_k\le 1/L_k\) for all sufficiently large \(k\). Since
\(\eta_k\to0\), we also have \(0\le\eta_k\le1\) for all sufficiently large
\(k\). In the remainder of the proof, we consider only such \(k\).

Set
$\Delta_k:=\E[V_{k+1}\mid \F_k]-V_k.$
Combining Lemmas~\ref{ydescentlemma}, \ref{xdescentlemma}, and \ref{variancereduction},
and using $a_{k+1}\le a_k$, $b_{k+1}\le b_k$, $\eta_k\le 1$, and, for all
sufficiently large \(k\), \(L_{k+1}\ge L_k\),
we obtain the raw recursion
\begin{equation}\label{eq:prop35-raw}
\Delta_k
\le
-A_k \E[\|x^{k+1}-x^k\|^2\mid \F_k]
-B_k \|y^{k}-y_k^*(x^k)\|^2
+T_k
-D_k\|x^k-x^{k-1}\|^2
+\zeta_k+\mathcal R_k^{\mathrm{raw}},
\end{equation}
where 
\begin{align*}
A_k &:=
\frac{a_k}{4\alpha_k}
-\frac{a_kL_{\varphi_k}}{2}
-2b_k\Bigl(1+\frac{2}{\beta_k\sigma_k}\Bigr)\frac{L_k^2}{\sigma_k^2}
-d_k,\;
B_k :=
\frac12 b_k\beta_k\sigma_k
-\frac12 a_k\alpha_kL_k^2
-54c_k\beta_k^2L_k^4,\\
T_k &:=
(-2c_k\eta_k+c_k\eta_k^2)\|e_x^{k-1}\|^2
+a_k\alpha_k\E[\|e_x^k\|^2\mid \F_k],\;
D_k :=
d_k-18c_kL_k^2,
\end{align*}
and
$\zeta_k
:=
\frac12 a_k(\sigma_k-\sigma_{k+1})D_y^2
+2b_k\Bigl(1+\frac{2}{\beta_k\sigma_k}\Bigr)
(
\frac{2(\rho_{k+1}-\rho_k)^2}{\sigma_k^2}M^4
+
\frac{2(\sigma_k-\sigma_{k+1})^2}{\sigma_k^2}D_y^2
).$
Using the convention \(\delta\ge1\), \(\mathcal R_k^{\mathrm{raw}}\) collects the
remaining nonnegative terms of order
\[
\mathcal R_k^{\mathrm{raw}}
=
\mathcal O\!\left(
a_k\alpha_kL_k^2\beta_k^2
+b_k\beta_k^2
+c_k\eta_k^2
+c_k\beta_k^2L_k^2
+c_k\beta_k^2L_k^4
\right)\delta^2.
\]

\noindent\textbf{Step 1: convert the $x$-difference term to the generalized gradient residual.}
By the projection inequality and Lemma~\ref{lineardescent},
\begin{align*}
\E[\|\mathcal{G}_k(x^k)\|^2\mid \F_k]
\le
\frac{3}{\alpha_k^2}\Big(
&\E[\|x^{k+1}-x^k\|^2\mid \F_k]
+\alpha_k^2L_k^2\|y^k-y_k^*(x^k)\|^2\\
&+\alpha_k^2\E[\|e_x^k\|^2\mid \F_k]
+\alpha_k^2L_k^2\beta_k^2\delta^2
\Big).
\end{align*}
Hence, whenever \(A_k\ge a_k/(8\alpha_k)\),
\begin{equation}
    \begin{aligned}
-A_k\E[\|x^{k+1}-x^k\|^2\mid \F_k]
\le\;
&-\frac{a_k\alpha_k}{24}\E[\|\mathcal{G}_k(x^k)\|^2\mid \F_k] +\frac{a_k\alpha_kL_k^2}{8}\|y^k-y_k^*(x^k)\|^2\\
&
+\frac{a_k\alpha_k}{8}\E[\|e_x^k\|^2\mid \F_k]+C\,a_k\alpha_kL_k^2\beta_k^2\delta^2.\\
\label{eq:prop35-gstep}
\end{aligned}
\end{equation}
\noindent\textbf{Step 2: absorb the estimator-variance block.}
Substituting \eqref{eq:prop35-gstep} into \eqref{eq:prop35-raw}, the remaining
$e_x^k$-term becomes
$\widetilde T_k
:=
(-2c_k\eta_k+c_k\eta_k^2)\|e_x^{k-1}\|^2
+\frac98 a_k\alpha_k\E[\|e_x^k\|^2\mid \F_k].$
Applying Lemma~\ref{variancereduction} to $\widetilde T_k$ yields
\begin{align*}
\widetilde T_k
\le\;
&-\hat{T}_k\|e_x^{k-1}\|^2+\frac{81}{4}a_k\alpha_kL_k^2\|x^k-x^{k-1}\|^2
+\frac{243}{4}a_k\alpha_k\beta_k^2L_k^4\|y^k-y_k^*(x^k)\|^2
+\mathcal R_k^{\mathrm{var}},
\end{align*}
where,
$\hat{T}_k:=\Bigl(2c_k\eta_k-c_k\eta_k^2-\frac98 a_k\alpha_k\Bigr)$
and using $(M+\rho_{k}pM^2+\sigma_k D_y )^2=\mathcal O(L_k^2)$,
$\mathcal R_k^{\mathrm{var}}
=
\mathcal O\!\left(
a_k\alpha_k\eta_k^2
+a_k\alpha_k\beta_k^2L_k^2
+a_k\alpha_k\beta_k^2L_k^4
\right)\delta^2.$
Therefore,
\begin{equation}\label{eq:prop35-pre-final}
\Delta_k
\le
-\frac{a_k\alpha_k}{24}\E[\|\mathcal{G}_k(x^k)\|^2\mid \F_k]
-\hat{B}_k\|y^k-y_k^*(x^k)\|^2
-\hat{D}_k\|x^k-x^{k-1}\|^2-\hat{T}_k\|e_x^{k-1}\|^2
+\zeta_k+r_k,
\end{equation}
where \(r_k:=\mathcal R_k^{\mathrm{raw}}+\mathcal R_k^{\mathrm{var}}\) and
\begin{align*}
\hat{B}_k
:=
\frac12 b_k\beta_k\sigma_k
-\frac58 a_k\alpha_kL_k^2
-54c_k\beta_k^2L_k^4
-\frac{243}{4}a_k\alpha_k\beta_k^2L_k^4,\;\;
\hat{D}_k
:=
d_k-18c_kL_k^2-\frac{81}{4}a_k\alpha_kL_k^2.
\end{align*}

\medskip
\noindent\textbf{Step 3: order comparisons.}
It remains to verify that the positive terms in
\eqref{eq:prop35-pre-final} dominate the lower-order corrections. First,
\begin{equation*}
    \begin{aligned}
        \frac{a_k}{\alpha_k}=\mathcal O((k+1)^{4t+s}),\qquad
a_kL_{\varphi_k}=\mathcal O((k+1)^t),\\
b_k\left(1+\frac{2}{\beta_k\sigma_k}\right)\frac{L_k^2}{\sigma_k^2}
=\mathcal O((k+1)^{3t+s}),\qquad
d_k=\mathcal O((k+1)^{-4t}).
    \end{aligned}
\end{equation*}
Since \(t>0\), we have \(4t+s>\max\{t,3t+s,-4t\}\). Hence, for all
sufficiently large \(k\), $A_k\ge \frac{a_k}{8\alpha_k}.$
Similarly,
\[
b_k\beta_k\sigma_k=\mathcal O((k+1)^{-5t-s}),\qquad
a_k\alpha_kL_k^2=\mathcal O((k+1)^{-6t-s}),
\]
\[
c_k\beta_k^2L_k^4=\mathcal O((k+1)^{-5t-2s}),\qquad
a_k\alpha_k\beta_k^2L_k^4=\mathcal O((k+1)^{-6t-3s}).
\]
Thus \(\hat B_k\ge \frac14 b_k\beta_k\sigma_k\) for all sufficiently large
\(k\). Moreover,
\[
d_k=\mathcal O((k+1)^{-4t}),\qquad
c_kL_k^2=\mathcal O((k+1)^{-5t}),\qquad
a_k\alpha_kL_k^2=\mathcal O((k+1)^{-6t-s}),
\]
which yields \(\hat D_k\ge0\) eventually. For the estimator-error coefficient,
\[
2c_k\eta_k=\mathcal O((k+1)^{-7t-s}),\qquad
c_k\eta_k^2=\mathcal O((k+1)^{-7t-2s}),\qquad
a_k\alpha_k=\mathcal O((k+1)^{-8t-s}),
\]
and therefore \(\hat T_k\ge0\) for all sufficiently large \(k\).

Finally, the parameter-drift term satisfies
$\zeta_k=\mathcal O((k+1)^{-1-3t})+\mathcal O((k+1)^{3t+s-2}).$
Because \(t>0\) and \(3t+s<1\), the sequence \(\{\zeta_k\}\) is summable.
The remaining term satisfies
$r_k=\mathcal O((k+1)^{-5t-2s})=\mathcal O(b_k\beta_k^2),$
and hence \(r_k\le Cb_k\beta_k^2\delta^2\), using again the convention
\(\delta\ge1\). Substituting these estimates into
\eqref{eq:prop35-pre-final} gives
\[
\E[V_{k+1}\mid \F_k]-V_k
\le
-\frac{a_k\alpha_k}{24}\E[\|\mathcal{G}_k(x^k)\|^2\mid \F_k]
-\frac14 b_k\beta_k\sigma_k\|y^k-y_k^*(x^k)\|^2
+C b_k\beta_k^2\delta^2
+\zeta_k,
\]
where $\{\zeta_k\}$ is a summable nonnegative sequence. This completes the proof.
\end{proof}

\subsection{Proof for Theorem \ref{convergethm}}
Before proving Theorem \ref{convergethm}, we present a tail-selection lemma showing that the Lyapunov descent recursion in Proposition \ref{lyapunovproposition} yields iterates with the desired rates.
\begin{lemma}\label{lem:shared-tail-selection}
Let $\{(x^k,y^k)\}$ be generated by SPACO under the assumptions of
Proposition~\ref{lyapunovproposition}, and suppose that \(2s+5t\neq1\). Define
$I_K:=\{\lfloor K/2\rfloor,\ldots,K\},\; m_K:=\lfloor K/2\rfloor.$
Then there exists a constant $C>0$ such that, for all sufficiently large $K$,
one can choose an index $k_K\in I_K$ satisfying
\begin{align}
\mathbb E\big[\|\mathcal{G}_{k_K}(x^{k_K})\|^2\big]
&\le C\Big(K^{-(1-8t-s)}+K^{-(s-3t)}\Big),\label{eq:shared-tail-G}\\
\mathbb E\big[\|y^{k_K}-y_{k_K}^*(x^{k_K})\|^2\big]
&\le C\Big(K^{-(1-5t-s)}+K^{-s}\Big).\label{eq:shared-tail-y}
\end{align}
Moreover, if $K_j:=4^j$ and $k_j:=k_{K_j}$, then $\{k_j\}$ is strictly increasing and
\begin{equation}\label{eq:shared-tail-sum}
\sum_{j=1}^\infty \mathbb E\Big[
\|\mathcal{G}_{k_j}(x^{k_j})\|^2
+\|y^{k_j}-y_{k_j}^*(x^{k_j})\|^2
\Big] < \infty.
\end{equation}
If, in addition, $s>5t$ and $10t+s<1$, then the same choice also satisfies
\begin{equation}\label{eq:shared-tail-sum-scaled}
\sum_{j=1}^\infty \mathbb E\Big[
\rho_{k_j}^2\|\mathcal{G}_{k_j}(x^{k_j})\|^2
+\rho_{k_j}^2\|y^{k_j}-y_{k_j}^*(x^{k_j})\|^2
\Big] < \infty.
\end{equation}
\end{lemma}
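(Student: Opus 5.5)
Telescope the descent inequality of Proposition~\ref{lyapunovproposition} over the window $I_K$ and select $k_K$ by an averaging (pigeonhole) argument. Taking total expectations and summing from $m_K$ to $K$ gives
\[
\sum_{k=m_K}^{K}\Big(\tfrac{a_k\alpha_k}{24}\E\|\mathcal G_k(x^k)\|^2+\tfrac{b_k\beta_k\sigma_k}{4}\E\|y^k-y_k^*(x^k)\|^2\Big)\le \E[V_{m_K}]+C\delta^2\sum_{k=m_K}^K b_k\beta_k^2+\sum_{k}\zeta_k .
\]
On $I_K$ all power weights are comparable to the corresponding power of $K$. In particular $a_k\alpha_k\asymp K^{-8t-s}$, $b_k\beta_k\sigma_k\asymp K^{-5t-s}$, and $\sum_{I_K}b_k\beta_k^2\asymp K^{1-5t-2s}$.

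\textbf{Step 1: bound $\E[V_{m_K}]$.} I will show $\sup_k\E[V_k]<\infty$, so that $\E[V_{m_K}]=\mathcal O(1)$. Because $X,Y$ are compact, $\varphi_k-\underline\varphi$, $\|y^k-y_k^*\|^2$ and $\|x^k-x^{k-1}\|^2$ are uniformly bounded. The term $c_k\E\|e_x^{k-1}\|^2$ is not, but iterating Lemma~\ref{variancereduction} gives at worst polynomial growth that $c_k=(k+1)^{-7t}$ should absorb. Alternatively, I can telescope the recursion from a fixed $k_0$: $\E[V_k]\le \E[V_{k_0}]+C\sum_{j<k} b_j\beta_j^2+\sum_j\zeta_j$. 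This yields $\E[V_{m_K}]\lesssim 1+K^{1-5t-2s}$, and the hypothesis $2s+5t\neq1$ removes the logarithmic case of this sum. Either way, the right-hand side is $\mathcal O(1+K^{1-5t-2s})$.

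\textbf{Step 2: select $k_K$ and read off the rates.} Call the two sums on the left $S_G$ and $S_y$. Since $a_k\alpha_k\gtrsim K^{-8t-s}$ and $|I_K|\asymp K$, the average of $\E\|\mathcal G_k\|^2$ over $I_K$ is at most $K^{8t+s-1}(1+K^{1-5t-2s})=K^{-(1-8t-s)}+K^{3t-s}$. Likewise the average of $\E\|y^k-y_k^*\|^2$ is at most $K^{5t+s-1}(1+K^{1-5t-2s})=K^{-(1-5t-s)}+K^{-s}$. To get one index satisfying both bounds, I will choose $k_K$ to minimize the normalized sum
\[
\frac{\E\|\mathcal G_k\|^2}{K^{-(1-8t-s)}+K^{-(s-3t)}}+\frac{\E\|y^k-y_k^*\|^2}{K^{-(1-5t-s)}+K^{-s}}
\]
over $I_K$. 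The average of this quantity is $\mathcal O(1)$, so its minimum is too, and this gives \eqref{eq:shared-tail-G}--\eqref{eq:shared-tail-y}.

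\textbf{Step 3: summability.} With $K_j=4^j$ the windows $I_{K_j}=[4^j/2,4^j]$ are disjoint and increasing, so $\{k_j\}$ is strictly increasing. The bounds from Step 2 decay geometrically in $j$ because all exponents are positive: $1-8t-s>0$, $s-3t>0$, $1-5t-s>0$, and $s>0$. Hence \eqref{eq:shared-tail-sum} holds.

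For \eqref{eq:shared-tail-sum-scaled}, multiply by $\rho_{k_j}^2\asymp 4^{2jt}$. The exponents become $1-10t-s$, $s-5t$, $1-7t-s$ and $s-2t$, which are all positive under $s>5t$ and $10t+s<1$. Again the series is geometric and converges.

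\textbf{Main obstacle.} The delicate point is Step 1, namely controlling $\E[V_{m_K}]$ and in particular the estimator term $c_k\|e_x^{k-1}\|^2$. My first attempt will be the telescoped bound from a fixed $k_0$, since it gives exactly the extra $K^{1-5t-2s}$ that produces the $K^{-(s-3t)}$ and $K^{-s}$ terms in the stated rates.
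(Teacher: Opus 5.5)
Your proposal is correct and follows essentially the same route as the paper. You telescope the descent inequality over $I_K$, bound $\E[V_{m_K}]$ by telescoping from a fixed $k_0$ (which is exactly where $2s+5t\neq1$ enters), and select $k_K$ by averaging. Your normalized criterion coincides, up to the factor $1/(K^{-(1-8t-s)}+K^{-(s-3t)})$, with the paper's choice of an index at which $\E\|\mathcal G_k(x^k)\|^2+K^{3t}\E\|y^k-y_k^*(x^k)\|^2$ does not exceed its average over $I_K$, because the two target rates differ exactly by the factor $K^{3t}$.
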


\begin{proof}
By Proposition~\ref{lyapunovproposition}, there exist $k_0\ge1$, $C>0$, and a summable nonnegative
sequence $\{\zeta_k\}$ such that for all $k\ge k_0$,
\begin{equation}\label{eq:shared-tail-one-step}
\frac{a_k\alpha_k}{24}\,\mathbb E\big[\|\mathcal{G}_k(x^k)\|^2\big]
+\frac{b_k\beta_k\sigma_k}{4}\,\mathbb E[\|y^k-y_k^*(x^k)\|^2]
\le
\mathbb E[V_k]-\mathbb E[V_{k+1}]
+C\,b_k\beta_k^2\delta^2+\zeta_k.
\end{equation}
Take $K$ large enough so that $m_K\ge k_0$. Summing
\eqref{eq:shared-tail-one-step} from $k=m_K$ to $K$ gives
\begin{equation}
    \begin{aligned}
&\sum_{k=m_K}^K a_k\alpha_k\,\mathbb E\big[\|\mathcal{G}_k(x^k)\|^2\big]
+\sum_{k=m_K}^K b_k\beta_k\sigma_k\,\mathbb E[\|y^k-y_k^*(x^k)\|^2]
\\\le\;&
C\Big(
\mathbb E[V_{m_K}]
+\sum_{k=m_K}^K b_k\beta_k^2
+\sum_{k=m_K}^K \zeta_k
\Big). 
\end{aligned}\label{eq:shared-tail-block}
\end{equation}
Using the assumption \(2s+5t\neq1\) imposed in this lemma,
summing \eqref{eq:shared-tail-one-step} from $k=k_0$ to
$m_K-1$ and using $V_{m_K}\ge0$ yields
\[
\mathbb E[V_{m_K}]
\le
\mathbb E[V_{k_0}]
+C\sum_{k=k_0}^{m_K-1} b_k\beta_k^2
+\sum_{k=k_0}^{m_K-1}\zeta_k
\le
C\bigl(1+K^{1-5t-2s}\bigr).
\]
Since $2s+5t\neq 1$, \eqref{eq:shared-tail-block} becomes
\begin{equation}\label{eq:shared-tail-tailbound}
\sum_{k\in I_K} a_k\alpha_k\,\mathbb E\big[\|\mathcal{G}_k(x^k)\|^2\big]
+\sum_{k\in I_K} b_k\beta_k\sigma_k\,\mathbb E[\|y^k-y_k^*(x^k)\|^2]
\le
C\bigl(1+K^{1-5t-2s}\bigr).
\end{equation}
For $k\in I_K$, Proposition~\ref{lyapunovproposition} gives
\[
a_k\alpha_k=\mathcal{O}(k^{-(8t+s)})\ge c_1K^{-(8t+s)},
\qquad
b_k\beta_k\sigma_k=\mathcal{O}(k^{-(5t+s)})\ge c_2K^{-(5t+s)}
\]
for some constants $c_1,c_2>0$, while $|I_K|\ge K/2$. Therefore,
\[
\frac{1}{|I_K|}\sum_{k\in I_K}
\Big(
\mathbb E[\|\mathcal{G}_k(x^k)\|^2]
+K^{3t}\mathbb E[\|y^k-y_k^*(x^k)\|^2]
\Big)
\le
C\Big(K^{-(1-8t-s)}+K^{-(s-3t)}\Big).
\]
Choosing $k_K\in I_K$ no larger than this average gives
\[
\mathbb E[\|\mathcal{G}_{k_K}(x^{k_K})\|^2]
+K^{3t}\mathbb E[\|y^{k_K}-y_{k_K}^*(x^{k_K})\|^2]
\le
C\Big(K^{-(1-8t-s)}+K^{-(s-3t)}\Big),
\]
which immediately implies \eqref{eq:shared-tail-G} and
\eqref{eq:shared-tail-y}.

Now take $K_j:=4^j$ and $k_j:=k_{K_j}$. Since
$I_{K_j}=\{2\cdot 4^{j-1},\ldots,4^j\},$
the sequence $\{k_j\}$ is strictly increasing. Moreover,
\eqref{eq:shared-tail-G}--\eqref{eq:shared-tail-y} yield
\begin{equation*}
    \begin{aligned}
        \mathbb E[\|\mathcal{G}_{k_j}(x^{k_j})\|^2]
\le
C\Big(4^{-j(1-8t-s)}+4^{-j(s-3t)}\Big),\;
\mathbb E[\|y^{k_j}-y_{k_j}^*(x^{k_j})\|^2]
\le
C\Big(4^{-j(1-5t-s)}+4^{-js}\Big).
    \end{aligned}
\end{equation*}
Since $8t+s<1$ and $s>3t$, all exponents above are positive, and
\eqref{eq:shared-tail-sum} follows.

Under the stronger assumptions $s>5t$ and $10t+s<1$, we additionally have
$\rho_{k_j}^2\le C K_j^{2t}=C4^{2jt}$. Multiplying the previous bounds by
$\rho_{k_j}^2$ gives
\begin{align*}
    \mathbb E[\rho_{k_j}^2\|\mathcal{G}_{k_j}(x^{k_j})\|^2]
\le
C\Big(4^{-j(1-10t-s)}+4^{-j(s-5t)}\Big),\\
\mathbb E[\rho_{k_j}^2\|y^{k_j}-y_{k_j}^*(x^{k_j})\|^2]
\le
C\Big(4^{-j(1-7t-s)}+4^{-j(s-2t)}\Big).
\end{align*}
Again all exponents are positive, so \eqref{eq:shared-tail-sum-scaled}
holds.
\end{proof}

\begin{proof}[Proof of Theorem \ref{convergethm}]
Let $k:=k_K$ be selected by Lemma~\ref{lem:shared-tail-selection}, and write
$y_k^*:=y_{k}^*(x^{k}).$
By Jensen's inequality and Lemma~\ref{lem:shared-tail-selection},
\begin{align}\label{eq:std-kkt}
\E[\|\mathcal G_k(x^k)\|]
&\le
C\Bigl(K^{-\frac{1-8t-s}{2}}+K^{-\frac{s-3t}{2}}\Bigr),\;
\E[\|y^k-y_k^*\|]
\le
C\Bigl(K^{-\frac{1-5t-s}{2}}+K^{-\frac{s}{2}}\Bigr).
\end{align}

To bound the feasibility violation, choose any
\(\tilde y^k\in\Gamma(x^{k})\), whose existence follows from
Assumption~\ref{assum1}. Since \(y_{k}^*\) maximizes
\(\LL_{k}(x^{k},\cdot)\) over \(Y\), we have
\[
\frac{\rho_{k}}{2}
\big\|[c(x^{k},y_{k}^*)]_+\big\|^2
\le
f(x^{k},y_{k}^*)
-f(x^{k},\tilde y^k)
+\frac{\sigma_{k}}{2}
\Big(\|\tilde y^k\|^2-\|y_{k}^*\|^2\Big).
\]
By compactness of $X\times Y$, the right-hand side is bounded by a constant,
hence
$\big\|[c(x^{k},y_{k}^*)]_+\big\|
\le
C\rho_{k}^{-\frac{1}{2}}
\le
CK^{-\frac{t}{2}}.$
Since $ [\cdot]_+$ and $c$ are Lipschitz on the compact set $X\times Y$,
recalling the definition of $M$, we have
\[
\big\|[c(x^{k},y^{k})]_+\big\|
\le
\big\|[c(x^{k},y_{k}^*)]_+\big\|
+M\|y^{k}-y_{k}^*\|.
\]
Taking expectations and using \eqref{eq:std-kkt}, we obtain
\begin{equation}\label{eq:thm38-39-feas}
\mathbb E\big[\|[c(x^{k},y^{k})]_+\|\big]
\le
C\Big(
K^{-\frac{t}{2}}
+K^{-\frac{1-5t-s}{2}}
+K^{-s/2}
\Big).
\end{equation}
Furthermore, set $\bar\lambda_k:=\rho_k[c(x^k,y^k)]_+.$
For the $y$-stationarity residual, using
$\nabla_y\ML(x^k,y^k,\bar\lambda_k)
=
\nabla_y\LL_k(x^k,y^k)+\sigma_k y^k,$ 
the nonexpansiveness of $\mathcal P_Y$, and the optimality of
$y_k^*,$
we obtain
\[
\begin{aligned}
&\|y^k-\mathcal P_Y(y^k+\nabla_y\ML(x^k,y^k,\bar\lambda_k))\|\\
\le\;&
\|y^k-y_k^*\|
+\Big\|
\mathcal P_Y\bigl(y_k^*+\nabla_y\LL_k(x^k,y_k^*)\bigr)
-\mathcal P_Y\bigl(y^k+\nabla_y\LL_k(x^k,y^k)+\sigma_k y^k\bigr)
\Big\|\\
\le\;&
\|y^k-y_k^*\|
+\|y_k^*-y^k+\nabla_y\LL_k(x^k,y_k^*)-\nabla_y\LL_k(x^k,y^k)-\sigma_k y^k\|\\
\le\;&
(2+L_k)\|y^k-y_k^*\|+\sigma_k\|y^k\|.
\end{aligned}
\]
Since \(L_k=O(K^t)\), \(\sigma_k=O(K^{-t})\), and \(Y\) is compact, this gives
\begin{equation}\label{eq:std-kkt-yres}
\E[\|y^k-\mathcal P_Y(y^k+\nabla_y\ML(x^k,y^k,\bar\lambda_k))\|]
\le
C\Bigl(
K^{-\frac{1-7t-s}{2}}
+K^{-\frac{s-2t}{2}}
+K^{-t}
\Bigr).
\end{equation}

For the $x$-stationarity residual, note that
$\nabla_x\ML(x^k,y^k,\bar\lambda_k)=\nabla_x\LL_k(x^k,y^k).$
Since $\alpha_k\to0$, we have $\alpha_k\le1$ for all sufficiently large $K$.
Using the standard monotonicity of the projected gradient mapping in the
stepsize,
$\|x^k-\mathcal P_X(x^k-\nabla_x\ML(x^k,y^k,\bar\lambda_k))\|
\le
\frac1{\alpha_k}
\|x^k-\mathcal P_X(x^k-\alpha_k\nabla_x\LL_k(x^k,y^k))\|.$

Applying the nonexpansiveness of $\mathcal P_X$, we further get
\[
\begin{aligned}
&\|x^k-\mathcal P_X(x^k-\nabla_x\ML(x^k,y^k,\bar\lambda_k))\|\\
\le\;&
\frac1{\alpha_k}
\|x^k-\mathcal P_X(x^k-\alpha_k\nabla_x\LL_k(x^k,y_k^*))\|
+\|\nabla_x\LL_k(x^k,y^k)-\nabla_x\LL_k(x^k,y_k^*)\|\\
\le\;&
\|\mathcal G_k(x^k)\|+L_k\|y^k-y_k^*\|.
\end{aligned}
\]
Hence, by \eqref{eq:std-kkt},
\begin{equation}\label{eq:std-kkt-xres}
\E[\|x^k-\mathcal P_X(x^k-\nabla_x\ML(x^k,y^k,\bar\lambda_k))\|]
\le
C\Bigl(
K^{-\frac{1-8t-s}{2}}
+K^{-\frac{s-3t}{2}}
+K^{-\frac{1-7t-s}{2}}
+K^{-\frac{s-2t}{2}}
\Bigr).
\end{equation}

Now set
$\tau:=\min\Bigl\{\frac{1-8t-s}{2},\,\frac{s-3t}{2},\,\frac{t}{2}\Bigr\}>0.$
Since
$\frac{1-5t-s}{2}
=
\frac{1-8t-s}{2}+\frac{3t}{2}\ge \tau,\;
\frac{s}{2}
=
\frac{s-3t}{2}+\frac{3t}{2}\ge \tau,$
and
$\frac{1-7t-s}{2}
=
\frac{1-8t-s}{2}+\frac{t}{2}\ge \tau,\;
\frac{s-2t}{2}
=
\frac{s-3t}{2}+\frac{t}{2}\ge \tau,\;
t\ge \tau,$
combining \eqref{eq:std-kkt},
\eqref{eq:thm38-39-feas}, \eqref{eq:std-kkt-yres}, and
\eqref{eq:std-kkt-xres} gives
\[
\max\Bigl\{
\mathbb E\big[\|\mathcal{G}_{k}(x^{k})\|\big],
\mathbb E\big[\|y^{k}-y_{k}^*(x^{k})\|\big],
\mathbb E\big[\|[c(x^{k},y^{k})]_+\|\big],
\mathcal R_{\rm KKT}(x^k,y^k,\bar\lambda_k)
\Bigr\}
\le
CK^{-\tau}.
\]
In particular,
$\mathcal R_{\rm KKT}(x^k,y^k,\bar\lambda_k)\le CK^{-\tau}.$
Since \(k=k_K\in I_K\subset\{0,\ldots,K\}\), there exists an iterate \(k\le K\)
with the same bound. Therefore it suffices to choose
\(K=\mathcal O(\epsilon^{-1/\tau})\), which proves the theorem.
\end{proof}

\subsection{Proof of Theorem \ref{thm:converge_to_enhanced_kkt}}
\begin{proof}
Let $K_j:=4^j$ and choose $k_j:=k_{K_j}$ as in
Lemma~\ref{lem:shared-tail-selection}. Then \eqref{eq:shared-tail-sum}
implies
\[
\sum_{j=1}^\infty \mathbb E\Big[
\|\mathcal{G}_{k_j}(x^{k_j})\|^2
+\|y^{k_j}-y_{k_j}^*(x^{k_j})\|^2
\Big]<\infty.
\]
Since the summands are nonnegative, Tonelli's theorem yields
$\sum_{j=1}^\infty
\Big(
\|\mathcal{G}_{k_j}(x^{k_j})\|^2
+\|y^{k_j}-y_{k_j}^*(x^{k_j})\|^2
\Big)<\infty
\;\text{a.s.}$
Hence,
$\|\mathcal{G}_{k_j}(x^{k_j})\|\to0,
\;
\|y^{k_j}-y_{k_j}^*(x^{k_j})\|\to0
\;\text{a.s.}$
If, in addition, $s>5t$ and $10t+s<1$, then
\eqref{eq:shared-tail-sum-scaled} similarly gives
$\rho_{k_j}\|\mathcal{G}_{k_j}(x^{k_j})\|\to0,
\;
\rho_{k_j}\|y^{k_j}-y_{k_j}^*(x^{k_j})\|\to0
\qquad\text{a.s.}$

Fix any sample path in this probability-one event. Since $X\times Y$ is compact,
there exists a further subsequence, indexed by $j_\ell$, such that
$(x^{k_{j_\ell}},y^{k_{j_\ell}})\to(\bar x,\bar y).$
Relabeling $\{k_{j_\ell}\}$ as $\{k_i\}$ already gives the first part of the theorem.

For the stationarity statement, set
\[
x_\ell:=x^{k_{j_\ell}},\;
y_\ell:=y^{k_{j_\ell}},\;
y_\ell^*:=y_{k_{j_\ell}}^*(x_\ell),\;
\rho_\ell:=\rho_{k_{j_\ell}},\;
\sigma_\ell:=\sigma_{k_{j_\ell}},\;
\alpha_\ell:=\alpha_{k_{j_\ell}},\;
\varphi_\ell:=\varphi_{k_{j_\ell}}.
\]
Then
$\|y_\ell-y_\ell^*\|\to0,$
and therefore $y_\ell^*\to\bar y$ as well. Also, $\rho_\ell\to\infty$ and
$\sigma_\ell\to0$.
Define
$\widehat x_\ell:=P_X\bigl(x_\ell-\alpha_\ell\nabla\varphi_\ell(x_\ell)\bigr),
\;
\widehat y_\ell:=y_{k_{j_\ell}}^*(\widehat x_\ell),$
and
$u_\ell:=
\mathcal{G}_{k_{j_\ell}}(x_\ell)
+\nabla\varphi_\ell(\widehat x_\ell)-\nabla\varphi_\ell(x_\ell).$
By the projection optimality condition,
$u_\ell\in \nabla\varphi_\ell(\widehat x_\ell)+N_X(\widehat x_\ell).$

Moreover,
$\|\widehat x_\ell-x_\ell\|
=
\alpha_\ell\|\mathcal{G}_{k_{j_\ell}}(x_\ell)\|
\to0,$
and, since $\alpha_\ell L_{\varphi_\ell}=O(k_{j_\ell}^{-3t-s})\to0$,
$\|u_\ell\|
\le
\bigl(1+\alpha_\ell L_{\varphi_\ell}\bigr)
\|\mathcal{G}_{k_{j_\ell}}(x_\ell)\|
\to0.$
Using Lemma~\ref{yxk+1xk},
\[
\|\widehat y_\ell-y_\ell^*\|
\le
\frac{L_{k_{j_\ell}}}{\sigma_\ell}\|\widehat x_\ell-x_\ell\|
\le
\frac{L_{k_{j_\ell}}}{\sigma_\ell}\alpha_\ell
\|\mathcal{G}_{k_{j_\ell}}(x_\ell)\|.
\]
Since $L_k/\sigma_k=O(k^{2t})$ and $\alpha_k=O(k^{-6t-s})$, we have
$\frac{L_{k_{j_\ell}}}{\sigma_\ell}\alpha_\ell
=
O(k_{j_\ell}^{-4t-s})\to0.$
Consequently, $\widehat y_\ell\to\bar y$.
Thus the sequence
$(\widehat x_\ell,\widehat y_\ell,\rho_\ell,\sigma_\ell,u_\ell)$
satisfies the assumptions of Theorem~\ref{thm:approx-to-type1}. Hence, if
GP\L CQ holds at $(\bar x,\bar y)$,
Theorem~\ref{thm:approx-to-type1} yields that $(\bar x,\bar y)$ is a Type-I enhanced KKT point.

Under the same GP\L CQ assumption, if the stronger conditions $s>5t$ and
$10t+s<1$ also hold, then
\[
\rho_\ell\|u_\ell\|
\le
\bigl(1+\alpha_\ell L_{\varphi_\ell}\bigr)
\rho_\ell\|\mathcal{G}_{k_{j_\ell}}(x_\ell)\|
\to0.
\]
Therefore the same sequence satisfies the scaled residual condition in
Theorem~\ref{thm:approx-to-type1}, and $(\bar x,\bar y)$ is a Type-II enhanced
KKT point.
\end{proof}

 \vskip 0.2in
\bibliography{references}

\end{document}